\numberwithin{equation}{section}
\newfont{\cyr}{wncyr10 scaled 1100}
\newfont{\cyrr}{wncyr9 scaled 1000}
\theoremstyle{plain}
\newtheorem{theorem}{Theorem}[section]
\newtheorem*{ThmA}{Theorem A}
\newtheorem*{ThmB}{Theorem B}
\newtheorem{proposition}[theorem]{Proposition}
\newtheorem{lemma}[theorem]{Lemma}
\newtheorem{corollary}[theorem]{Corollary}
\newtheorem{conjecture}[theorem]{Conjecture}
\theoremstyle{definition}
\newtheorem{definition}[theorem]{Definition}
\newtheorem{assumption}[theorem]{Assumption}
\theoremstyle{remark}
\newtheorem{remark}[theorem]{Remark}
\newtheorem{notation}[theorem]{Notation}
\newtheorem{remark/notation}[theorem]{Remark/Notation}
\newtheorem{notation/convention}[theorem]{Notation/Convention}
\newcommand{\Q}{\mathds Q}
\newcommand{\N}{\mathds N}
\newcommand{\Z}{\mathds Z}
\newcommand{\R}{\mathds R}
\newcommand{\C}{\mathds C}
\newcommand{\T}{\mathds T}
\newcommand{\defeq}{\vcentcolon=}
\DeclareMathOperator{\Pic}{Pic}
\DeclareMathOperator{\End}{End}
\DeclareMathOperator{\Aut}{Aut}
\DeclareMathOperator{\Frob}{Frob}
\DeclareMathOperator{\Hom}{Hom}
\DeclareMathOperator{\Gal}{Gal}
\DeclareMathOperator{\GL}{GL}
\DeclareMathOperator{\Sel}{Sel}
\DeclareMathOperator{\M}{M}
\DeclareMathOperator{\BKK}{BK}
\DeclareMathOperator{\Nek}{Nek}
\DeclareMathOperator{\CH}{CH}
\DeclareMathOperator{\AJ}{AJ}
\DeclareMathOperator{\Tam}{Tam}
\DeclareMathOperator{\Heeg}{Heeg}
\DeclareMathOperator{\im}{im}
\DeclareMathOperator{\corank}{corank}
\DeclareMathOperator{\length}{length}
\DeclareMathOperator{\Tors}{Tors}
\DeclareMathOperator{\JL}{\mathrm{JL}}
\newcommand{\val}{\mathrm{val}}
\newcommand{\res}{\mathrm{res}}
\newcommand{\cyc}{{\rom{cyc}}}
\newcommand{\ord}{\mathrm{ord}}
\newcommand{\fin}{\mathrm{fin}}
\newcommand{\mot}{\mathrm{mot}}
\newcommand{\an}{\mathrm{an}}
\newcommand{\alg}{\mathrm{alg}}
\newcommand{\reg}{\mathtt{reg}}
\newcommand{\loc}{\mathrm{loc}}
\newcommand{\sing}{\mathrm{sing}}
\newcommand{\Sha}{\mbox{\cyr{X}}}
\definecolor{Indigo}{rgb}{0.2,0.1,0.7}
\definecolor{Violet}{rgb}{0.5,0.1,0.7}
\definecolor{White}{rgb}{1,1,1}
\definecolor{Green}{rgb}{0.1,0.9,0.2}
\newcommand{\longmono}{\mbox{\;$\lhook\joinrel\longrightarrow$\;}}
\newcommand{\longepi}{\mbox{\;$\relbar\joinrel\twoheadrightarrow$\;}}
\newcommand{\smallmat}[4]{\bigl(\begin{smallmatrix}#1&#2\\#3&#4\end{smallmatrix}\bigr)}
\newcommand{\cB}{\mathcal B}
\newfont{\gotip}{eufb10 at 12pt}
\newcommand{\cO}{{\mathcal O}}
\newcommand{\cM}{{\mathscr M}}
\newcommand{\p}{\mathfrak{p}}
\newcommand{\Sym}{\operatorname{Sym}}
\DeclareMathOperator{\Kol}{Kol}
\DeclareMathOperator{\GS}{GS}
\DeclareMathOperator{\Reg}{Reg}
\newcommand{\rom}{\mathrm}
\newcommand{\et}{\text{\'et}}
\newcommand{\MM}{\mathcal{M}}
\newcommand{\RR}{\mathscr{R}}
\begin{document}

\title[Kolyvagin's conjecture for modular forms]{Kolyvagin's conjecture for modular forms}
%\today
%\date{}
\author{Matteo Longo, Maria Rosaria Pati and Stefano Vigni}

\thanks{The authors are partially supported by PRIN 2022 ``The arithmetic of motives and $L$-functions'' and by the GNSAGA group of INdAM. The research by the second and third authors is partially supported by the MUR Excellence Department Project awarded to Dipartimento di Matematica, Universit\`a di Genova, CUP D33C23001110001.}

\begin{abstract}
Our main result in this article is a proof (under mild technical assumptions) of an analogue for $p$-adic Galois representations attached to a newform $f$ of even weight $k\geq4$ of Kolyvagin's conjecture on the $p$-indivisibility of derived Heegner points on elliptic curves, where $p$ is a prime number that is ordinary for $f$. Our strategy, which is inspired by work of W. Zhang in weight $2$, is based on a variant for modular forms of the congruence method originally introduced by Bertolini--Darmon to prove one divisibility in the anticyclotomic Iwasawa main conjecture for rational elliptic curves. We adapt to higher (even) weight modular forms this approach via congruences, building crucially on results of Wang on the indivisibility of Heegner cycles over Shimura curves. Then we offer an application of our results on Kolyvagin's conjecture to the Tamagawa number conjecture for the motive of $f$ and describe other (standard) consequences on structure theorems for Bloch--Kato--Selmer groups, $p$-parity results and converse theorems for $f$. Since in the present paper we need $p>k+1$, our main theorem and its applications can be viewed as complementary to results obtained by the first and third authors in their article on the Tamagawa number conjecture for modular motives, where Kolyvagin's conjecture was proved (in a completely different way exploiting the arithmetic of Hida families) under the assumption that $k$ is congruent to $2$ modulo $2(p-1)$, which forces $p<k$.  In forthcoming work, we will use results contained in this paper to prove (under analogous assumptions) the counterpart for an even weight newform $f$ of Perrin-Riou's Heegner point main conjecture for elliptic curves (``Heegner cycle main conjecture'' for $f$).
\end{abstract}

\address{Dipartimento di Matematica, Universit\`a di Padova, Via Trieste 63, 35121 Padova, Italy}
\email{mlongo@math.unipd.it}
%\address{Laboratoire de Math\'ematiques Nicolas Oresme (LMNO), UMR 6139, Normandie Universit\'e, Universit\'e de Caen Normandie -- CNRS, 14000 Caen, France}
%\email{maria-rosaria.pati@unicaen.fr}
\address{Dipartimento di Matematica, Universit\`a di Genova, Via Dodecaneso 35, 16146 Genova, Italy}
\email{mariarosaria.pati@unige.it}
\address{Dipartimento di Matematica, Universit\`a di Genova, Via Dodecaneso 35, 16146 Genova, Italy}
\email{stefano.vigni@unige.it}
%\address{Shanghai Center for Mathematical Sciences, Fudan University, No. 2005 Songhu Road, Shanghai, 200438, China}
%\email{wanghaining1121@outlook.com}

\subjclass[2020]{11F11, 14C15}

\keywords{Modular forms, Heegner cycles, Kolyvagin's conjecture.}

\maketitle

%\tableofcontents

%{\color{red} VERSIONE IN CUI SI ESCLUDE IWASAWA} 

\section{Introduction} \label{Intro}

In \cite{kolyvagin-selmer}, Kolyvagin proposed a conjecture predicting the $p$-indivisibility of his system of derived Galois cohomology classes built out of Heegner points on rational elliptic curves, where $p$ is a prime number. This conjecture, which has several remarkable consequences on the arithmetic of elliptic curves, was proved (under standard assumptions) by W. Zhang for good reduction primes (\cite{zhang-selmer}) and by Skinner--Zhang for multiplicative reduction primes (\cite{SZ}). The strategy in these works is based on the congruence method originally introduced by Bertolini--Darmon to prove one divisibility in the anticyclotomic Iwasawa main conjecture for rational elliptic curves (\cite{BD-IMC}). As an application of their work on Kolyvagin's conjecture, Zhang and Skinner--Zhang proved the $p$-part of the Birch and Swinnerton-Dyer formula for elliptic curves in analytic rank $1$; under different arithmetic assumptions and in various degrees of generality, this formula was proved independently also by Berti--Bertolini--Venerucci (\cite{BBV}), Jetchev--Skinner--Wan (\cite{JSW}) and Castella (\cite{castella-BSD}). 

Our main result in this paper is a proof (under mild technical conditions) of an analogue for the $p$-adic Galois representations attached to an even weight newform $f$ of Kolyvagin's conjecture, where $p$ is a prime number that is ordinary for $f$. Such an analogue was first proposed (in a slightly less general context) by Masoero in \cite{Masoero}, where structure theorems for Selmer and Shafarevich--Tate groups of modular forms are proved. As will be apparent, our strategy is inspired by the work of Zhang and of Berti--Bertolini--Venerucci in weight $2$ and builds crucially on results of Wang on the $p$-indivisibility of Heegner cycles over Shimura curves (\cite{wang}). It is worth remarking that Kolyvagin's conjecture for even weight newforms was studied earlier in the article by the first and third authors on the Tamagawa number conjecture for modular motives (\cite{LV-TNC}). In \cite{LV-TNC}, a proof of various cases of this conjecture was the key technical step towards proving (conditionally on very specific instances of two general conjectures in arithmetic algebraic geometry, namely, bijectivity of $p$-adic regulator maps and injectivity of $p$-adic Abel--Jacobi maps) the $p$-part of the Tamagawa number conjecture for the (homological) motive attached to a newform of even weight $k\geq4$ in analytic rank $1$. In particular, since in the present paper we need $p>k+1$, our main theorem here can be seen as complementary to the results in \cite{LV-TNC}, where Kolyvagin's conjecture was proved (in a completely different way exploiting the arithmetic of $p$-adic Hida families of modular forms, \emph{cf.} Remark \ref{strategy-rem}) under the assumption that $k$ is congruent to $2$ modulo $2(p-1)$, which is a condition on the $p$-adic weight space of Coleman--Mazur (\cite{ColMaz}) and forces the inequality $p<k$.

The organisation of this paper is as follows. Section \ref{reciprocity-sec} is mainly devoted to background material on Shimura curves, classical and quaternionic modular forms, Selmer groups and Heegner cycles; our exposition here follows \cite{BBV} quite closely. We also review, under mild conditions on the prime $p$ and residual $p$-adic representations (Assumption \ref{ass}) that are in force throughout the article and are higher weight counterparts of analogous assumptions appearing in \cite{zhang-selmer}, the two reciprocity laws \emph{\`a la} Bertolini--Darmon (\cite{BD-IMC}) as extended to our higher weight setting by Wang (\cite{wang}). 

Section \ref{kolyvagin-sec} is the heart of the paper. We define our Kolyvagin systems $\kappa_S$ of derived Galois cohomology classes built out of quaternionic Heegner cycles (here $S$ is allowed to vary over a certain set $\mathcal{P}^\mathrm{indef}$ of square-free products of ``admissible'' primes), we formulate an analogue of Kolyvagin's conjecture for the newform $f$, both in ``classical'' form (Conjecture \ref{kolyvagin-conj}) and in strong form (Conjecture \ref{strong-kolyvagin-conj}), then we prove (under Assumption \ref{ass}) the strong version of the conjecture. More precisely, under the above-mentioned assumption, our main result is the following. 

\begin{ThmA}
For all $S\in\mathcal{P}^\mathrm{indef}$, the strong form of Kolyvagin's conjecture for $\kappa_S$ holds true. In particular, $\kappa_S\not=\{0\}$.
\end{ThmA}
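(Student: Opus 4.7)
The plan is to adapt the congruence-method induction of W.~Zhang \cite{zhang-selmer}, as refined by Berti--Bertolini--Venerucci \cite{BBV}, to higher weight $k \geq 4$. Two ingredients are the engine of the argument: on one hand Wang's $p$-indivisibility theorem for a base Heegner cycle on an indefinite Shimura curve \cite{wang}, on the other the pair of reciprocity laws \`a la Bertolini--Darmon recalled in Section \ref{reciprocity-sec}, which link the derived classes $\kappa_S$ (on the indefinite side) to theta/$L$-value elements on the definite side. These reciprocity laws supply an ``Euler system with bipartite dual'' structure that propagates $p$-indivisibility and vanishing-order information across $\mathcal{P}^{\mathrm{indef}}$.

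Concretely, I would proceed by induction on a pair of invariants, for instance on the number of prime factors $r(S)$ and on the minimal $p$-adic valuation $M(\kappa_S)$ of the classes composing $\kappa_S$. The base case $S=1$ is supplied by Wang's theorem: the Heegner cycle attached to $f$ is non-zero modulo $p$, and a standard bookkeeping then yields the strong form at $S=1$. For the inductive step, assuming the strong form at some $S$, one selects admissible Kolyvagin primes via a Chebotarev argument made available by the big-image portion of Assumption \ref{ass}. The first reciprocity law expresses the singular residue of (an auxiliary class attached to) $\kappa_{S\ell}$ at an admissible prime $\ell$ in terms of the definite theta element of level $S$; the second reciprocity law conversely expresses the theta element at level $S\ell$ via the finite local component of $\kappa_S$ at $\ell$. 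Pairing these two laws with a judicious choice of two admissible primes $\ell_1,\ell_2$ allows one to transfer the strong form from $S$ to $S\cdot\ell_1\ell_2$, and iterating reaches any element of $\mathcal{P}^{\mathrm{indef}}$.

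The main obstacle, and the point where the higher-weight setting is genuinely harder than weight two, is verifying that the reciprocity laws carry the \emph{correct integral} information modulo $p^{M(\kappa_S)+1}$. The derived classes $\kappa_S$ take values in the continuous Galois cohomology of a self-dual twist of Deligne's representation rather than in a Mordell--Weil group, so one must control the interaction of the Bloch--Kato crystalline local condition at $p$ (for which the hypotheses $p>k+1$ and $p$-ordinarity of $f$ are crucial) with the finite/singular dichotomy at admissible primes. The residual irreducibility and level-raising conditions gathered in Assumption \ref{ass} are precisely what one needs, through the Ihara-type lemmas and Jacquet--Langlands transfers underlying the reciprocity laws of \cite{wang}, to guarantee that the integral structures match on the two sides of the bipartite graph, so that the $p$-adic valuation relations built into the strong form are preserved under each inductive swap. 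Once this integral compatibility is in place (the content of Section \ref{reciprocity-sec}), the combinatorial induction of \cite{zhang-selmer} and \cite{BBV} carries over with only formal modifications, yielding the strong form of Kolyvagin's conjecture for every $S \in \mathcal{P}^{\mathrm{indef}}$ and the non-vanishing $\kappa_S\neq\{0\}$ as an immediate consequence.
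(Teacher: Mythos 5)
Your overall intuition — Wang's indivisibility as base case, the two reciprocity laws as the engine, Chebotarev to pick admissible primes, pairs $\ell_1,\ell_2$ as the ``step'' of the induction — matches the paper's strategy in broad outline, but the induction itself is set up on the wrong invariant and runs in the wrong direction, and this is not a cosmetic difference.

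You propose to induct on the number of prime factors of $S$ (together with a valuation $M(\kappa_S)$), starting from $S=1$ and ``growing'' $S$ by multiplying in admissible pairs $\ell_1\ell_2$. This cannot work as stated. First, Wang's theorem does \emph{not} give the strong form at $S=1$ unconditionally: it gives it when $\dim_{k_\p}\Sel_S\bigl(K,\overline{T}_{f,\p}^\dagger\bigr)=1$, which may fail for $S=1$. Second, and more seriously, the admissible primes you obtain via Chebotarev at each step are not under your control, so ``iterating reaches any element of $\mathcal{P}^{\mathrm{indef}}$'' is unjustified: an arbitrary $S$ need not arise as a product of the primes produced along any such chain. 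The paper instead inducts on $r=\dim_{k_\p}\Sel_S\bigl(K,\overline{T}_{f,\p}^\dagger\bigr)$, \emph{descending} from a general $S$: a parity theorem (missing from your sketch) shows $r$ is odd; $r=1$ is Wang's theorem; and for $r\ge 3$ one chooses $\ell_1,\ell_2$ to drop the Selmer rank by two, applies the inductive hypothesis to $\kappa_{S\ell_1\ell_2}^\star$, and then pulls the non-vanishing \emph{back} to $\kappa_S^\star$ via the combined reciprocity law. This downward induction visits every $S$ because $r$ is an invariant of $S$ that strictly decreases, not because $S$ is built up from $1$.

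Two further gaps. You flag as the ``main obstacle'' the need to control integral structures modulo $p^{M(\kappa_S)+1}$, but the strong form of the conjecture as defined (Definition of $\kappa_S^\star$ via the classes $c_n(S)=c_{n,1}(S)$) and both reciprocity laws are statements with $\cO/\wp$-coefficients; no lift to $\wp$-power level is required, so this ``obstacle'' is not actually present. Conversely, the genuinely hard point that your sketch omits is the pull-back step: after the inductive hypothesis yields $\kappa_{S\ell_1\ell_2}^\star\neq\{0\}$, one must show that $\ell_2$ is \emph{not} a base point of $\kappa_{S\ell_1\ell_2}^\star$ in order for the combined reciprocity law to propagate a non-zero class back to $S$. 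This is where the triangulation of Selmer groups (the higher-weight analogue of Zhang's Lemma 8.4, given in Proposition~\ref{prop masoero}) and a careful analysis of the $\pm$-eigenspaces and the base locus $\cB^\star_{S\ell_1\ell_2}$ are indispensable. Without these ingredients the inductive step does not close, so the proposal as written has a real gap and does not amount to a proof.
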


This is Theorem \ref{kol-thm} in the main body of the text. As we pointed out before, a key ingredient in our proof of Theorem A is a result of Wang on the $p$-indivisibility of Heegner cycles on Kuga--Sato varieties fibered over Shimura curves (\cite{wang}), which essentially says that (the strong form of) Kolyvagin's conjecture for $\kappa_S$ holds true if the rank of a certain Selmer group is $1$. This result provides the first step in an inductive argument that is closely inspired by the one adopted by Zhang in the weight $2$ case (\cite{zhang-selmer}); this argument is based on a process that (following Zhang) we call ``triangulation'' of Selmer groups (\S \ref{triangulation-subsec}). We remark that the prime number $p$ is assumed to be ordinary for $f$, as this property is imposed in \cite{wang}, whose results play a central role here; an extension of Wang's results in Selmer rank $1$ (and possibly of our work on Kolyvagin's conjecture as well) to the case of non-ordinary primes is expected to appear as part of Enrico Da Ronche's PhD thesis at Universit\`a di Genova.

The last two sections of this article describe arithmetic consequences of the validity of Kolyvagin's conjecture for $f$. Section \ref{tamagawa-sec} offers an application of Theorem A to the $p$-part of the Tamagawa number conjecture of Bloch--Kato (\cite{BK}) and Fontaine--Perrin-Riou (\cite{FPR}) for the motive $\MM$ of $f$. More precisely, let $r_\alg(\MM)$ and $r_\an(\MM)$ be the algebraic rank and the analytic rank of $\MM$, respectively (see \S \ref{TNC-statement-subsec} for definitions); our contribution in the direction of the Tamagawa number conjecture for $\MM$ can be stated (in a somewhat simplified fashion) as follows.

\begin{ThmB}
Suppose that the assumptions in \S \ref{TNC-ass} are satisfied. If $r_\an(\MM)=1$, then $r_\alg(\MM)=1$ and the $p$-part of the Tamagawa number conjecture for $\MM$ is true. 
\end{ThmB}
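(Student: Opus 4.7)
The plan is to follow the well-established paradigm of Kolyvagin, as adapted to the higher weight setting by Nekov\'a\v r, Masoero (\cite{Masoero}) and by the first and third authors in \cite{LV-TNC}: Theorem A supplies the structural input on the Bloch--Kato--Selmer group and the Shafarevich--Tate group of $\MM$, while a higher weight Gross--Zagier type formula feeds in the analytic information. The two ingredients together should pin down the $p$-adic valuation predicted by the Tamagawa number conjecture.

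First I would deduce from Theorem A the usual Kolyvagin structure theorems for $\MM$: that, under the hypothesis $r_\an(\MM)=1$, the Bloch--Kato--Selmer group $H^1_f(\Q,V_f)$ is one-dimensional over the relevant coefficient field, generated modulo torsion by the basic Heegner class $y_K$, and that the $p$-primary part of $\Sha(\MM)$ is finite with order controlled by the square of the index of $\Z_p\cdot y_K$ inside $H^1_f(\Q,T_f)$. These consequences are the higher-weight analogues of the standard implications of Kolyvagin's conjecture studied in \cite{Masoero} and in \cite{LV-TNC}; in fact the strong form (Conjecture \ref{strong-kolyvagin-conj}), whose validity is ensured by Theorem A, is tailored precisely so that these arguments go through.

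Next I would invoke the higher weight Gross--Zagier--Zhang formula in the Shimura curve setting underlying Theorem A (as developed notably by Shnidman, Besser and Wang) to identify $L'(f,k/2)$, up to the Deligne archimedean period, with the Beilinson--Bloch height of $y_K$. Since $y_K$ is non-torsion precisely when $L'(f,k/2)\neq 0$, this yields $r_\alg(\MM)=1$. The final step is to read off the $p$-part of the Tamagawa number conjecture from the two inputs: the Kolyvagin bound expresses $\ord_p\bigl(|\Sha(\MM)[p^\infty]|\bigr)$ in terms of the $p$-adic length of $H^1_f(\Q,T_f)/\Z_p\cdot y_K$, whereas Gross--Zagier, combined with the explicit reciprocity laws of Bertolini--Darmon--Wang recalled in \S \ref{reciprocity-sec} and with Nekov\'a\v r's comparison between the Beilinson--Bloch height and the $p$-adic Bloch--Kato regulator, rewrites $v_p\bigl(L'(f,k/2)\bigr)$ as the same length, corrected by explicit local Euler factors. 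Equating the two expressions gives the desired $p$-part of the Bloch--Kato--Fontaine--Perrin-Riou formula.

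The hard part will be the bookkeeping of normalisations and local factors. Matching the Gross--Zagier archimedean period with the Bloch--Kato motivic period, verifying that the Tamagawa factors at primes dividing the level $N$ agree on the two sides of the conjectural formula, and correctly handling the ordinary Euler factor at $p$ (where the assumption $p>k+1$ is used crucially, in order to remain in the Fontaine--Laffaille range and avoid pathologies in the associated filtered Dieudonn\'e modules) will absorb most of the effort. Conceptually, however, nothing beyond Theorem A is required: the argument runs in parallel to that of \cite{LV-TNC}, with the Hida-theoretic inputs of that paper replaced by Wang's direct higher-weight reciprocity laws.
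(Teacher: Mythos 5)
Your sketch matches the paper's strategy: the authors reduce Theorem B directly to \cite[Theorem 4.41]{LV-TNC}, replacing the Kolyvagin-conjecture input \cite[Theorem 3.35]{LV-TNC} by the present Theorem A (Theorem \ref{kol-thm}) and noting in \S\ref{TNC-remarks} that the Skinner--Urban ingredient applies without the congruence $k\equiv2\pmod{2(p-1)}$. Your description of the Kolyvagin structure theorems, the higher-weight Gross--Zagier input, and the normalisation/Tamagawa bookkeeping is a faithful unrolling of what that cited argument does.
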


This result corresponds to Theorem \ref{p-TNC}. With Theorem A available, our proof of Theorem B works exactly as in \cite{LV-TNC}, where details can be found (see \S \ref{TNC-remarks} for some remarks on our overall strategy).

Finally, Section \ref{consequences-sec} gives further (standard) applications of Theorem A to the arithmetic of $f$: a structure theorem for Bloch--Kato--Selmer groups, $p$-parity results, $p$-converse theorems. As the proofs are identical to those of analogous results in \cite{LV-TNC}, in this paper we just focus on the statements and refer again to \cite{LV-TNC} for details.

We conclude this introduction by remarking that results obtained in this paper will play a major role in our forthcoming work \cite{LPV}, where we prove (under assumptions similar to those required here) the counterpart for an even weight newform $f$ of Perrin-Riou's Heegner point main conjecture for elliptic curves (``Heegner cycle main conjecture'' for $f$).

\subsection{Notation and conventions} \label{notation-subsec}

Throughout this paper, we fix an algebraic closure $\bar\Q$ of $\Q$, an embedding $\iota_\infty:\bar\Q\hookrightarrow\C$ and set $G_\Q\defeq\Gal(\bar\Q/\Q)$ for the absolute Galois group of $\Q$. For any number field $L$, we choose an embedding $\iota_L:L\hookrightarrow\bar\Q$, which will allow us to view $L$ as a subfield of $\bar\Q$. We fix also a prime number $p$, an algebraic closure $\bar\Q_p$ of $\Q_p$ and an embedding $\iota_p:\bar\Q\hookrightarrow\bar\Q_p$. Moreover, the symbol $\cO$ will stand for the ring of integers of a number field and $\wp$ for a maximal ideal of $\cO$ of residue characteristic $p$; as customary, $\cO_\wp$ will denote the completion of $\cO$ at $\wp$. Finally, the term ``ring'' will always mean ``commutative ring with unity'', unless otherwise stated.

\subsection*{Acknowledgements} 

We would like to thank Enrico Da Ronche for his interest in our work and for enlightening conversations on some of the topics of this paper.

%{\color{red}
%NOTAZIONI: 
%\begin{itemize}
%\item $H_c$ è il ring class field di conduttore $c$ con gruppo di Galois $\mathcal{G}_c=\Gal(H_c/K)$; 
%\item 
%$K_\infty$ è la $\Z_p$-estensione anticiclotomica di $K$; $K_n\subset K_\infty$ soddisfa $\Gal(K_n/K)\simeq \Z/p^n\Z$. 
%\end{itemize}
%}

\section{Heegner cycles and explicit reciprocity laws} \label{reciprocity-sec}

We collect in this section all background material that we will need concerning the explicit reciprocity laws proved in \cite{wang}, extending to higher weight modular forms results for elliptic curves (\emph{i.e.}, for weight $2$ newforms with rational Fourier coefficients) from the seminal paper \cite{BD-IMC} by Bertolini--Darmon that were later generalised, \emph{e.g.}, in \cite{ChHs2}, \cite{Longo-AIF}, \cite{Longo-Padova}, \cite{Longo-CMH}, \cite{Nek-LR}, \cite{zhang-selmer}. We closely follow the spirit of \cite{BBV} and \cite{BLV} for the exposition of this material. We fix a pair of coprime square-free integers $D$ and $M$ such that $p\nmid MD$. Notation from \S \ref{notation-subsec} is in force.

\subsection{Shimura curves and sets} \label{sec2.1}

Let $\cB=\cB_D$ be the (unique, up to isomorphism) quaternion algebra over $\Q$ of discriminant $D$. If the number of prime factors of $D$ is \emph{odd} (respectively, \emph{even}), then $\cB$ is definite (respectively, indefinite) and we say that we are in the \emph{definite} (respectively, \emph{indefinite}) case. Let us fix an Eichler order $\mathcal{R}=\mathcal{R}_{M,D}$ of level $M$ in $\cB$ and a maximal order $\cO_\cB$ of $\cB$ containing $\mathcal{R}$.
For each prime $\ell\nmid D$, choose an isomorphism of $\Q_\ell$-algebras
\begin{equation}\label{iq}
 i_\ell:\cB_\ell\defeq\cB\otimes_\Q\Q_\ell\overset\simeq\longrightarrow\M_2(\Q_\ell) 
\end{equation}  
such that the image of $\mathcal{R}_\ell\defeq\mathcal{R}\otimes_\Z\Z_\ell$ via $i_\ell$ is equal to the subgroup of $\M_2(\Z_\ell)$ of matrices that are upper triangular modulo $\ell^{\,\ord_\ell(M)}$. In the indefinite case we also fix an isomorphism of $\R$-algebras
\begin{equation} \label{iinfty}
i_\infty:\cB_\infty\defeq\cB\otimes_\Q\R\overset\simeq\longrightarrow\M_2(\R). 
\end{equation}
Let $K$ be an imaginary quadratic field. Let us fix an embedding of $\Q$-algebras $\psi_K:K\hookrightarrow \cB$ that allows us to identify $K$ with a subalgebra of $\cB$: we write $K\subset\cB$ instead of $\psi_K(K)\subset\cB$. Let $t\mapsto \bar t$ be the main involution of $\cB$, which coincides with the action of $\Gal(K/\Q)$ on $K$. Now choose $J_\cB\in \cB^\times$ satisfying the following conditions (recall that the prime $p$ was fixed at the beginning of the section):  
\begin{itemize}
\item $\cB=K\oplus K\cdot J_\cB$ as $\Q$-vector spaces; 
\item $J_\cB^2=\beta$ with $\beta\in\Q^\times$ and $\beta<0$; 
\item $J_\cB t=\bar{t}J_\cB$ for all $t\in K$; 
\item $\beta\in (\Z_\ell^\times)^2$ for all $\ell\,|\,Mp$ and $\beta\in \Z_\ell^\times$ for all $\ell\,|\,D$.
\end{itemize}
Let $-D_K<0$ denote the discriminant of $K$ and set $\delta_K\defeq\sqrt{-D_K}$; put $D'_K\defeq D_K$ if $D_K$ is odd and $D'_K\defeq D_K/2$ if $D_K$ is even. Define $\theta\defeq\frac{D_K'+\delta_K}{2}$. We require the isomorphisms $i_\ell$ in \eqref{iq} to further satisfy the conditions $i_\ell(\theta)=\smallmat{\theta+\bar\theta}{-\theta\bar\theta}{1}{0}$ and $i_\ell(J_\cB)=\sqrt{\beta}\cdot\smallmat{-1}{\theta+\bar\theta}{0}{1}$ for all primes $\ell\,|\,Mp$ (note that $\sqrt{\beta}\in \Z_\ell^\times$ for all such $\ell$). 

Let $\widehat{\Z}\defeq\varprojlim_N\Z/N\Z$ be the profinite completion of $\Z$ and for any abelian group $A$ set $\widehat{A}\defeq A\otimes_\Z\widehat{\Z}$. The group $\widehat{\mathcal{R}}^\times$ acts by left multiplication on $\widehat{\cB}^\times$ and trivially on $\Hom(\C,\cB_\infty)$, while $\cB^\times$ acts by right multiplication on $\widehat{\cB}^\times$ and by conjugation on $\Hom(\C,\cB_\infty)$. Define the double coset space 
\[ Y_0(M,D)(\C)\defeq\widehat{\mathcal{R}}^\times\big\backslash\bigl(\widehat{\cB}^\times\times\Hom(\C,\cB_\infty)\bigr)\big/\cB^\times. \]
As we shall recall below, the geometric properties of $Y_0(M,d)$ largely depend on whether the quaternion algebra $\cB$ is definite or indefinite.

\subsubsection{The indefinite case}\label{sec2.1.1}

Let $\mathcal{H}\defeq\bigl\{z\in\C\mid\Im(z)>0\bigr\}$ be the complex upper half plane. In the indefinite case, there are canonical bijections
\begin{equation} \label{bijections-eq}
Y_0(M,D)(\C)\simeq\widehat{\mathcal{R}}^\times\big\backslash\bigl(\widehat{\cB}^\times\times\mathcal H\bigr)\big/\cB^\times\simeq\Gamma_0(M,D)\backslash\mathcal{H}, 
\end{equation}
where $\Gamma_0(M,D)$ is the subgroup of $i_\infty\bigl(\widehat{\mathcal{R}}^\times\cap\cB^\times\bigr)\subset\GL_2(\R)$ consisting of all elements of determinant $1$ and the second bijection comes from the strong approximation theorem. The set $Y_0(M,D)(\C)$ inherits a Riemann surface structure and admits a model defined over $\Q$, which we denote simply by $Y_0(M,D)$. We call $Y_0(M,D)$ the \emph{Shimura curve} of level $M$ and discriminant $D$ attached to $(\cB,\mathcal{R})$. If $D=1$, then this is the classical (open) modular curve $Y_0(M)=Y_0(M,1)$ and we let $X_0(M)$ denote its canonical (Baily--Borel) compactification that is obtained by adding finitely many cusps.  

Suppose $D>1$. Then $Y_0(M,D)$ is a projective curve admitting a model $\mathcal{X}_0(M,D)$ over $\Z[1/MD]$ that coarsely represents the moduli problem attaching to each $\Z[1/MD]$-scheme $S$ the set of isomorphism classes of triples $(A,\iota_A,C)$ where
\begin{itemize}
\item $A$ is a polarized abelian surface over $S$,
\item $\iota_A:\cO_\cB\hookrightarrow \End_S(A)$ is an embedding,
\item $C$ is an $\cO_\cB$-stable locally cyclic subgroup of $A[M]$ of order $M^2$. 
\end{itemize} 
Here, for any integer $m\geq1$, $A[m]$ denotes the $m$-torsion subgroup scheme of $A$; the map $\iota_A$ is called a \emph{quaternionic action}. Let now $d\geq 5$ be an auxiliary integer and write $\mathcal{X}_d(M,D)$ for the projective $\Z[1/MDd]$-scheme representing the moduli problem that associates with each $\Z[1/MDd]$-scheme $S$ the set of isomorphism classes of $4$-tuples $(A,\iota_A,C,\nu_d)$ where the triple $(A,\iota_A,C)$ is as before and 
\begin{itemize}
\item $\nu_d:(\cO_\cB/d\cO_\cB{)}_S\rightarrow A[d]$ is an isomorphism of $\mathcal{O}_\cB$-stable group schemes.
\end{itemize}
Here $(\cO_\cB/d\cO_\cB{)}_S$ is the constant $\cO_\cB/d\cO_\cB$-valued group scheme over $S$. The isomorphism $\nu_d$ is called a \emph{full level $d$-structure}. There is a covering map $\mathcal{X}_d(M,D)\rightarrow\mathcal{X}_0(M,D)$ that is defined over $\Z[1/MDd]$ and corresponds (in the moduli interpretation) to the canonical map forgetting the $\nu_d$-structure. Denote by $\mathcal{A}_d(M,D)\rightarrow\mathcal{X}_d(M,D)$ the universal object, so that $\mathcal{A}_d(M,D)$ is an abelian surface over $\Z[1/MDd]$ equipped with a quaternionic action, a locally cyclic subgroup scheme of order $M^2$ and a full level $d$-structure. For any integer $k\geq 2$, let $\mathcal{W}_{k,d}(M,D)\rightarrow \mathcal{X}_d(M,D)$ be the Kuga--Sato variety of weight $k$ over $\mathcal{X}_d(M,D)$, which is the $(k-2)/2$-fold self-product of 
$\mathcal{A}_d(M,D)$ over $\mathcal{X}_d(M,D)$. Let $W_{k,d}(M,D)\rightarrow X_d(M,D)$ denote the generic fiber of $\mathcal{W}_{k,d}(M,D)$; we also write $A_d(M,D)\rightarrow X_d(M,D)$ for the generic fiber of $\mathcal{A}_{d}(M,D)\rightarrow\mathcal{X}_d(M,D)$. The dimension of $W_{k,d}(M,D)$ is $k-1$. 

Suppose $D=1$. Then $X_0(M)$ is a projective curve having a model $\mathcal{X}_0(M)$ over $\Z[1/M]$ that is obtained as the canonical compactification of a model $\mathcal{Y}_0(M)$ over $\Z[1/M]$ of the open modular curve $Y_0(M)$; the curve $\mathcal Y_0(M)$ coarsely represents over $\Z[1/M]$ the moduli problem that associates with a $\Z[1/M]$-scheme $S$ the isomorphism classes of pairs $(E,C)$ of the following kind:
\begin{itemize}
\item $E$ is an elliptic curve over $S$;
\item $C$ is a locally cyclic subgroup of $E[M]$ of order $M$. 
\end{itemize}
Let as before $d\geq 5$ be an auxiliary integer and denote $\mathcal{Y}_d(M)=\mathcal{Y}_d(M,1)$ the projective $\Z[1/Md]$-scheme representing the moduli problem that associates with a $\Z[1/Md]$-scheme $S$ the isomorphism classes of triples $(E,C,\nu_d)$, where the pair $(E,C)$ is as before and 
\begin{itemize}
\item $\nu_d:(\Z/d\Z)^2\overset\simeq\longrightarrow E[d]$ is an isomorphism of group schemes.
\end{itemize}
The isomorphism $\nu_d$ is again called a \emph{full level $d$-structure}; as above, there is a covering map $\mathcal{Y}_d(M)\rightarrow\mathcal{Y}_0(M)$ defined over $\Z[1/Md]$ that corresponds (in the moduli interpretation) to the map forgetting the $\nu_d$-structure. Let $\mathcal{E}_d(M)=\mathcal{E}_d(M,1)\rightarrow\mathcal{Y}_d(Y)$ be the universal object. Denote $\mathcal{X}_d(M)=\mathcal{X}_d(M,1)$ the canonical compactification of $\mathcal{Y}_d(M)$ and let $\bar{\mathcal{E}}_d(M)\rightarrow\mathcal{X}_d(M)$ denote the generalised universal elliptic curve equipped with a full $d$-level structure and a locally cyclic subgroup of order $M$. Let $\mathcal{W}_{k,d}(M)=\mathcal{W}_{k,d}(M,1)\rightarrow \mathcal{X}_d(M)$ be the Kuga--Sato variety of weight $k\geq2$ over $\mathcal{X}_d(M)$ defined as the desingularization of the $(k-2)$-fold self-product of $\bar{\mathcal{E}}_d(M)$ over $\mathcal{X}_d(M)$. As before, write $W_{k,d}(M)=W_{k,d}(M,1)\rightarrow X_d(M)$ for the generic fiber of $\mathcal{W}_{k,d}(M)$ and $E_d(M)\rightarrow X_d(M)$ for the generic fiber of $\mathcal{E}_d(M)\rightarrow \mathcal{X}_d(M)$. The dimension of $W_{k,d}(M)$ is again $k-1$. 
 
\subsubsection{The definite case} 

In the definite case, write $\mathcal{R}_1,\dots,\mathcal{R}_h$ for the conjugacy classes of oriented Eichler orders of level $M$ in $\cB$; moreover, for $j=1,\dots,h$ set $\Gamma_j\defeq\mathcal{R}_j^\times/\{\pm 1\}$. Each $\Gamma_j$ is a finite group and there is an isomorphism 
\[ Y_0(M,D)(\C)\simeq\coprod_{j=1}^h\Gamma_j\backslash\Hom(\C,\cB_\infty). \]
We attach to $\cB$ a conic $\mathcal{C}_{/\Q}$ as follows. For every $\Q$-algebra $A$, we let $\mathcal{C}(A)$ be the subset of $\cB\otimes_\Q A$ consisting of all non-zero elements having trace and norm $0$ modulo $A^\times$. Define $\mathcal{C}_j\defeq\Gamma_j\backslash\mathcal{C}$ for $j=1,\dots,h$ and 
\[ Y_0(M,D)\defeq\coprod_{j=1}^h\mathcal{C}_j, \]
which is a curve defined over $\Q$. Then, as the notation suggests, $Y_0(M,D)(\C)$ is the set of complex points of $Y_0(M,D)$. 
We call $Y_0(M,D)$ the \emph{Gross curve} of level $M$ and discriminant $D$ attached to $(\cB,\mathcal{R})$. See, \emph{e.g.}, \cite[\S 2.1]{BLV} for details.

\subsection{Modular forms on quaternion algebras} \label{modular-quaternion-subsec}

We briefly review the basics of the theory of modular forms on quaternion algebras over $\Q$.
%From now to the end of this section, let $R$ denote, as in the beginning of the section, either the completion 
%$\mathcal{O}_\wp$ of the ring of algebraic integers $\mathcal{O}$ of a number field at a prime ideal $\wp$ of residual characteristic $p$, or the quotient $\mathcal{O}/\wp^n$ for some integer $n\geq 1$. 

\subsubsection{Pairings on polynomial rings} \label{pairings-subsubsec}

Let $k\geq 2$ be an even integer and let $A$ be a ring. Set $r\defeq k-2$ and $L_r(A)\defeq\Sym^r(A^2)$; the $A$-module $L_r(A)$ can be identified with the $A$-module $A[X,Y{]}_r$ of homogeneous polynomials of degree $r$ in two variables $X,Y$ over $A$. Let $\mathbf{v}_j\defeq X^{r/2-j}Y^{r/2+j}$ for $j=0,\dots,r/2$ be the standard basis elements of $L_r(A)$. Denote by 
\begin{equation} \label{rho-eq}
\rho_k:\GL_2(A)\longrightarrow\Aut_A\bigl(L_r(A)\bigr) 
\end{equation}
the representation defined by $\rho_k(g)(P)\defeq\det^{-(r/2)}(g)(P|g)$, where if $P$ is a polynomial in $A[X,Y{]}_r$ and $(X,Y)g$ is left matrix multiplication, then $(P|g)(X,Y)\defeq P\bigl((X,Y)g\bigr)$. 

As in \S \ref{notation-subsec}, let $p$ be a prime number. Suppose that $p>k+1$. If $A$ is a $\Z_{(p)}$-algebra, then we define a perfect pairing ${\langle\cdot,\cdot\rangle}_k:L_r(A)\times L_r(A)\rightarrow A$ by setting
\[ \bigg\langle \sum_{i=1}^ra_i\mathbf{v}_i,\sum_{j=1}^rb_j\mathbf{v}_j\bigg\rangle_{\!k}\defeq\sum_{-\frac{k-2}{2}\leq n\leq \frac{k-2}{2}}a_nb_{-n}\frac{\Gamma(k/2+n)\Gamma(k/2-n)}{\Gamma(k-1)}. \]
This pairing satisfies the rule $\big\langle\rho_k(g)P_1,\rho_k(g)P_2\big\rangle_k={\langle P_1,P_2\rangle}_k$ for all $P_1,P_2\in L_r(A)$ and all $g\in\GL_2(A)$. 

\subsubsection{The indefinite case} 

For a ring $A$, let $\mathcal{L}_r(A)$ denote the locally constant sheaf on $X_0(M,D)$ (respectively, $Y_0(M)$) when $D>1$ (respectively, $D=1$) associated with the $\GL_2(A)$-module $L_r(A)$. Write $H^1_\mathrm{par}\bigl(X_0(M,D),\mathcal L_r(A)\bigr)$ for the parabolic cohomology group of $X_0(M,D)$ with coefficients in $\mathcal L_r(A)$, which coincides with $H^1\bigl(X_0(M,D),\mathcal L_r(A)\bigr)$ when $D>1$ and with the natural image in $H^1\bigl(Y_0(M),\mathcal{L}_r(A)\bigr)$ of the compactly supported cohomology group $H^1_\mathrm{cpt}\bigl(Y_0(M),\mathcal{L}_r(A)\bigr)$ when $D=1$. 

\begin{definition}
The $A$-module of \emph{$A$-valued modular forms} of weight $k$ and level $M$ on $\cB$ is $S_k(M,D;A)\defeq H^1_\mathrm{par}\bigl(X_0(M,D),\mathcal{L}_r(A)\bigr)$.
\end{definition}

We just set $S_k(M,D)\defeq S_k(M,D;\C)$ for the $\C$-vector space of modular forms of weight $k$ and level $M$ on $\cB$.  
 
\subsubsection{The definite case} \label{definite-subsubsec}

Suppose that $\cB$ is definite. We introduce the $A$-module of modular forms of weight $k$ and level $M$ on $\cB$, and then make a comparison with other definitions in \cite{ChHs1} (\emph{cf.} also \cite{ChHs2}, \cite{wang}). As above, $A$ is a ring.

\begin{definition} \label{defMF}
The space of \emph{quaternionic modular forms} of weight $k$ and level $M$ on $\cB$ with coefficients in $A$ is the $A$-module $S_k(M,D;A)$ of all functions $\cB^\times\backslash\widehat\cB^\times/\widehat{\mathcal{R}}^\times\rightarrow L_k(A)$. 
\end{definition}

Now we compare Definition \ref{defMF} with the corresponding definition in \cite{ChHs1} when $A=\C$. Recall the element $J_\cB$ such that $\cB=K\oplus KJ_\cB$ and define the embedding $\iota_{\cB,K}:\cB\hookrightarrow \M_2(K)$ by $\iota_{\cB,K}(a+b J_\cB)\defeq\smallmat {a}{b\beta}{\bar b}{\bar a}$. We thus obtain a morphism $\cB_\infty^\times\rightarrow\GL_2(\C)$ by composing each component of the image of $\iota_{\cB,K}$ with the fixed morphism $\iota_\infty:\bar\Q\hookrightarrow\C$; by further composing with the representation $\rho_k$ from \eqref{rho-eq}, we get a representation  $\rho_{k,\infty}:\cB_\infty^\times\rightarrow\End_\C\bigl(\mathcal{L}_r(\C)\bigr)$ given by $\rho_{k,\infty}=\rho_k\circ\iota_\infty\circ\iota_{\cB,K}$. In \cite{ChHs1}, complex-valued modular forms are defined as functions 
$\Phi:\widehat\cB^\times\rightarrow L_r(\C)$ such that $\Phi(agu)=\rho_{k,\infty}(a)\cdot\Phi(g)$ for all $g\in\widehat\cB^\times$, $a\in\cB^\times$ and $u\in \widehat{\mathcal{R}}^\times$; 
more generally, if $A$ is a $K$-algebra, then $\rho_{k,\infty}(a)$ belongs to $\End_K\bigl(\mathcal{L}_r(K)\bigr)$ for $a\in \cB^\times$, so we may define $A$-valued modular forms as functions
\begin{equation} \label{CVMF}
\Phi:\widehat{\mathcal B}^\times\longrightarrow L_r(A)
\end{equation} 
such that $\Phi(agu)=\rho_{k,\infty}(a)\Phi(g)$ for all $g\in\widehat\cB^\times$, $a\in\cB^\times$ and $u\in\widehat{\mathcal{R}}^\times$. 

In \cite{ChHs1}, $p$-adic modular forms are introduced as follows. Let $\p$ be the prime of $K$ above $p$ corresponding to the embedding $\iota_p\circ\iota_K$, notation being as in \S \ref{notation-subsec}. With a slight notational abuse, for $u\in\widehat\cB^\times$ we let $u_p\in \GL_2(K_\p)$ denote the $\p$-component of $i_p(u)$, where $K_\p$ is the completion of $K$ at $\p$; therefore, if $u\in \widehat{\mathcal{R}}^\times$ and $A$ is an $\cO_{K_\p}$-algebra, where $
\cO_{K_\p}$ is the valuation ring of $K_\p$, then $\rho_k(u_p)$ belongs to $\End_{A}\bigl(\mathcal{L}_r(A)\bigr)$. For an $\cO_{K_\p}$-algebra $A$, a \emph{$p$-adic quaternionic modular form} of weight $k$ and level $M$ on $\cB$ with coefficients in $A$ is a function 
\begin{equation} \label{padicMF}
\Psi:\widehat\cB^\times\longrightarrow L_r(A)
\end{equation} 
such that $\Psi(agu)=\rho_k(u_p^{-1})\cdot\Psi(g)$ for all $g\in\widehat\cB^\times$, $a\in B^\times$ and $u\in \widehat{\mathcal{R}}^\times$. Composing the embedding $\iota_{\cB,K}:\cB\hookrightarrow\M_2(K)$ with the fixed embedding $\iota_p:\bar\Q\hookrightarrow\bar\Q_p$ (as before, on each component of the image), we obtain an embedding of $\Q_p$-algebras $\cB^\times\hookrightarrow \GL_2(\bar\Q_p)$. Further composing with $\rho_k$, we obtain a representation 
\[ \rho_{k,p}:\cB^\times\longrightarrow \End_{\bar\Q_p}\bigl(\bar\Q_p\bigr) \] 
defined by $\rho_{k,p}\defeq\rho_k\circ \iota_p\circ\iota_{B,K}$. If we set $\gamma_p\defeq\smallmat{\sqrt{\beta}}{-\sqrt{\beta}\bar\theta}{-1}{\theta}$, then $\rho_{k,p}(g)=\rho_k\bigl(\gamma_p i_p(g)\gamma_p^{-1}\bigr)$ (recall that $i_p$ is defined in \S\ref{sec2.1}). Also, $\rho_{k,\infty}$ and $\rho_{k,p}$ are compatible in the sense that they are equal when evaluated at elements of $\cB^\times$. If $A$ is a $K$-algebra, then $\rho_{k,\infty}(a)$ belongs to $\End_K\bigl(\mathcal{L}_r(K)\bigr)$ for $a\in\cB^\times$; in this case the map $\Phi\mapsto\widehat\Phi$, where 
\[ \widehat\Phi(g)\defeq\rho_k(\gamma_p^{-1})\rho_{k,p}(g_p^{-1})\Phi(g), \]
establishes a bijection between $A$-valued modular forms as in \eqref{CVMF} and $p$-adic modular forms as in \eqref{padicMF}. 
We call $\widehat\Phi$ the \emph{$p$-adic avatar} of $\Phi$. Finally, the map $\Phi\mapsto\bigl(g\mapsto\rho_k(g_p)\Phi(g)\bigr)$ gives a bijection between the $A$-module of $p$-adic quaternionic modular forms and the module $S_k(M,D,A)$ introduced in Definition \ref{defMF}. 

\subsubsection{Quaternionic eigenforms} \label{sec:quateigen} \label{quaternionic-subsubsec}

If $A$ is a ring, then the $A$-module $S_k(M,D;A)$ is equipped with an action of the (abstract) Hecke algebra that is generated over $A$ by Hecke operators $T_q$ for $q\nmid MD$ and $U_q$ for $q\,|\,MD$; these operators are defined, as customary, using suitable double coset decompositions (see, \emph{e.g.}, \cite[\S 2.3.5]{hida-iwasawa}). We denote by $\T_{M,D}^{(k)}(A)$ the image of this Hecke algebra inside $\End_A\bigl(S_k(M,D;A)\bigr)$. 

Notation being as in \S \ref{notation-subsec}, let $R$ stand for either $\cO_\wp$ or $\cO/\wp$; set $\T_{M,D}^{(k)}\defeq\T_{M,D}^{(k)}(\cO_\wp)$.  

 \begin{definition} \label{modforms}
 A \emph{quaternionic eigenform} of weight $k$ and level $M$ with coefficients in $R$ on $\cB$ is an $\cO_\wp$-algebra homomorphism $\phi:\T_{M,D}^{(k)}\rightarrow R$. 
 \end{definition} 
 
We denote by $\mathcal{S}_k(M,D;R)$ the set of all quaternionic eigenforms of weight $k$, level $M$, coefficients in $R$ on $\cB$. Suppose that $\mathcal{O}_\wp$ is an $\cO_{K_\p}$-algebra; then duality results for Hecke algebras show that for each $\phi\in \mathcal{S}_k(M,D;R)$ there is $\Phi\in S_k(M,D;R)$ that is an eigenform for all $T\in\T_{M,D}^{(k)}$ and satisfies $\phi(T)=a_T$, where $T(\Phi)=a_T\Phi$. 

A result of Deligne--Serre (\cite{DS}) ensures that every eigenform $\phi\in \mathcal{S}_k(M,D;\cO/\wp)$ is \emph{liftable to characteristic $0$}, \emph{i.e.}, there exists an eigenform $\Phi\in\mathcal S_k(M,D;\cO_\wp)$ such that $\phi(T)=\overline{\Phi(T)}$ for all Hecke operators $T\in\T_{M,D}^{(k)}$, where $x\mapsto \bar{x}$ is the projection map $\cO_\wp\twoheadrightarrow\cO/\wp$. 

If $D=1$, then the $\C$-vector space $H^1_\mathrm{par}\bigl(X_0(M),\mathcal L_r(\C)\bigr)$ is Hecke-equivariantly isomorphic to the $\C$-vector space $S_k(\Gamma_0(M))$ of modular forms of weight $k$ and level $\Gamma_0(M)$; in accordance with Definition \ref{modforms}, eigenforms of level $M$ on $\M_2(\Q)$ correspond to eigenforms in $S_k(\Gamma_0(M))$.  

\subsection{Jacquet--Langlands correspondence} \label{secJL} 

As before, let $\cO$ be the ring of integers of a number field. Consider the $\cO$-module $S_k^\text{$D$-new}\bigl(\Gamma_0(MD),\cO\bigr)\defeq S_k^\text{$D$-new}\bigl(\Gamma_0(MD),\Z\bigr)\otimes_\Z\cO$ of forms that are new at all the primes dividing $D$ and have Fourier coefficients in $\cO$. By the Jacquet--Langlands correspondence, a normalised eigenform $f\in S_k^\text{$D$-new}\bigl(\Gamma_0(MD),\mathcal{O}\bigr)$ is associated with a quaternionic eigenform 
\[ \JL(f):\T_{M,D}^{(k)}\longrightarrow\cO. \]
%Conversely, for every ring homomorphism $\tilde f:\T_{M,D}^{(k)}\rightarrow \mathcal{O}$ there exists a normalised eigenform in $S_k^\text{$D$-new}\bigl(\Gamma_0(MD),\mathcal{O}\bigr)$ whose $\ell$-th Fourier coefficient is $\tilde f(T_\ell)$ for every prime $\ell\nmid MD$.
The map $f\mapsto\JL(f)$ establishes a bijection between the set of normalised eigenforms in $S_k^\text{$D$-new}\bigl(\Gamma_0(MD),\cO\bigr)$ and $\mathcal S_k(M,D;\cO)$. See, \emph{e.g.}, \cite[\S 2.3.6]{hida-iwasawa} for more details. 

\subsection{Galois representations} \label{galois-subsec}

Fix a prime $p\nmid MD$ and let $R$ be a complete noetherian local ring with finite residue field $k_R$ of characteristic $p$. Let $\phi\in\mathcal S_k(M,D;R)$ and denote by $\bar{\phi}\in\mathcal S_k(M,D;k_R)$ the composition of $\phi$ with the canonical reduction map $R\twoheadrightarrow k_R$. In light of the Jacquet--Langlands correspondence, to $\bar\phi$ one can attach a Galois representation 
\[ \rho_{\bar\phi}:G_\Q\longrightarrow \GL_2(k_R) \] 
unramified outside $MDp$ and such that if $\Frob_\ell\in G_\Q$ is a geometric Frobenius element at a prime $\ell\nmid MDp$, then the characteristic polynomial of $\rho_{\bar\phi}(\Frob_\ell)$ is $X^2-\bar{\phi}(T_\ell)X+\ell^{k-1}$ (see, \emph{e.g.}, \cite[\S 2.2]{carayol-formes}). If $\rho_{\bar\phi}$ is (absolutely) irreducible, then $\rho_{\bar\phi}$ is characterised (up to equivalence) by this property (see, \emph{e.g.}, \cite[Lemme 3.2]{DS}); moreover, by \cite[Th\'eor\`eme 3]{carayol-formes} there exists a unique (up to equivalence) continuous representation 
\[ \rho_\phi:G_\Q\longrightarrow\GL_2(R) \]
unramified outside $MDp$ and such that the characteristic polynomial of $\rho_\phi(\Frob_\ell)$ at a prime $\ell\nmid MDp$ is $X^2-\phi(T_\ell)X+\ell^{k-1}$. In other words, the reduction $\bar\rho_\phi$ of $\rho_\phi$ is (equivalent to) $\rho_{\bar\phi}$; for notational simplicity, we identify $\bar\rho_\phi$ with $\rho_{\bar\phi}$. 

When $R=\cO_\wp$, let $T_{\phi,\wp}$ be a free $\cO_\wp$-module of rank $2$ affording $\rho_\phi$, let $T_{\phi,\wp}^\dagger\defeq T_{\phi,\wp}(k/2)$ be the self-dual twist of $T_{\phi,\wp}$ and define $\overline{T}_{\phi,\wp}^\dagger\defeq T^\dagger_{\phi,\wp}/\wp T^\dagger_{\phi,\wp}$. Furthermore, we set $A_{\phi,\wp}^\dagger\defeq\Hom_{\cO_\wp}\bigl(T_{\phi,\wp}^\dagger,F_\wp/\cO_\wp\bigr)$, where $F_\wp$ is the fraction field of $\cO_\wp$.   

For technical reasons (namely, our need to use results from \cite{wang}), we also assume 
\begin{enumerate}
\item[(H-$p$)] $p>k+1$.
\end{enumerate}
In the list of conditions below, set $\sqrt{p^*}\defeq(-1)^{\frac{p-1}{2}}p$.

\begin{assumption} \label{ass} 
The eigenform $\phi$ satisfies the following conditions: 
\begin{enumerate} 
\item $\phi$ is ordinary, \emph{i.e.}, $\phi(T_p)\in R^\times$;
\item the restriction of $\overline{T}_{\phi,\wp}^\dagger$ to $\Gal\bigl(\bar\Q/\Q(\sqrt{p^*})\bigr)$ is absolutely irreducible;
\item $\overline{T}_{\phi,\wp}^\dagger$ is ramified at all primes $q\,|\,M$ with $q\equiv 1\pmod{p}$;
\item $\overline{T}_{\phi,\wp}^\dagger$ is ramified at all primes $q\,|\,D$ with $q\equiv\pm 1\pmod{p}$; 
\item there is a prime $q\,|\,MD$ such that $\overline{T}_{\phi,\wp}^\dagger$ is ramified at $q$. 
\end{enumerate}
\end{assumption}

These conditions are a higher weight counterpart of analogous ones appearing in \cite{zhang-selmer}.

\subsection{Level raising} \label{levelraisingsec}

We first recall the notion of ``admissible prime'', originally introduced by Bertolini--Darmon in their fundamental work on the Iwasawa main conjecture for elliptic curves over anticyclotomic $\Z_p$-extensions (\cite[p. 18]{BD-IMC}); more precisely, we recall its extension to the case of higher weight modular forms provided by \cite[Definition 1.1]{ChHs2}. 

\subsubsection{Admissible primes} \label{admissible-subsubsec}

Let us fix a quaternionic eigenform $\phi:\T_{M,D}^{(k)}\rightarrow R$ in the sense of Definition \ref{modforms} and assume that $\phi(U_p)$ is invertible modulo $\wp$. For the rest of the paper, we also pick an imaginary quadratic field $K$ of discriminant $D_K$ coprime to $p$ and assume that the coprime integers $M$ and $D$ satisfy the following condition: all the primes dividing $M$ split in $K$ and all the primes dividing $D$ are inert in $K$.  
 
\begin{definition} \label{admissible-def}
A prime number $\ell$ is \emph{admissible for} $(\phi,\wp,K)$ if 
\begin{enumerate}
\item $\ell$ does not divide $MDp$;
\item $\ell$ is inert in $K$;
\item $p$ does not divide $\ell^2-1$;
\item $\wp$ divides the ideal generated by $\ell^{\frac{k}{2}}+\ell^{\frac{k-2}{2}}-\epsilon_\ell \phi(T_\ell)$ with $\epsilon_\ell\in\{\pm1\}$.
\end{enumerate}
\end{definition}

We write $\mathcal P(\phi,\wp,K)$ for the set of square-free products of admissible primes for $(\phi,\wp,K)$, with the convention that $1\in\mathcal P(\phi,\wp,K)$. If no ambiguity is likely to arise, an admissible prime for $(\phi,\wp,K)$ will be simply called \emph{admissible}. Having fixed $\phi$, $\wp$, $K$,  we set $\mathcal{P}\defeq\mathcal{P}(\phi,\wp,K)$ and decompose $\mathcal P$ as
\[ \mathcal{P}=\mathcal{P}^\mathrm{def}\cup\mathcal{P}^\mathrm{indef} \] 
by requiring that $S\in\mathcal{P}^\mathrm{def}$ (respectively, $S\in\mathcal{P}^\mathrm{indef}$) if and only if $\mu(DS)=-1$ (respectively, $\mu(DS)=1$), where $\mu$ is the M\"obius function. In other words, $S\in\mathcal{P}^\mathrm{def}$ (respectively, $S\in\mathcal{P}^\mathrm{indef}$) if and only if the square-free integer $DS$ is a product of an \emph{odd} (respectively, \emph{even}) number of primes.
 
\subsubsection{Level raising} 
 
Recall that here $R$ stands either for $\cO$ or for $\cO/\wp$. Let $\phi\in\mathcal S_k(M,D;R)$, let $S\in\mathcal P$ and take $\xi\in\mathcal S_k(M,DS;R)$.
 
\begin{definition} \label{congruent-def}
The eigenforms $\phi$ and $\xi$ are \emph{congruent modulo $\wp$} if 
\begin{itemize}
\item $\phi(T_\ell)\equiv \xi(T_\ell)\pmod{\wp}$ for all primes $\ell\nmid MDS$;
\item $\phi(U_\ell)=\xi(U_\ell)\pmod{\wp}$ for all primes $\ell\,|\,MD$.
\end{itemize} 
\end{definition} 
In the situation of the definition above, we write $\phi\equiv \xi\pmod{\wp}$. Analogous notation will be used for eigenforms $\phi\in\mathcal S_k(M,D;\cO)$ and $\xi\in\mathcal S_k(M,DS;\cO/\wp)$ satisfying the congruence relations in Definition \ref{congruent-def}.

The next result was essentially proved in \cite{wang}.

\begin{theorem}[Wang] \label{levelraising} 
%Let  $\mathcal{O}$ be the ring of integers of a finite extension of $\Q$ and $\wp$ as before a prime ideal of $\mathcal{O}$ of residual characteristic $p$. Fix an integer $n\geq 1$ and let $R=\mathcal{O}/\wp^n$ or $R=\mathcal{O}_\wp$. 
Let $\phi\in \mathcal{S}_k(M,D;\cO_\wp)$ and $S\in \mathcal{P}(\phi,\wp,K)$. 
\begin{enumerate}\item There exists $\bar\phi_S\in \mathcal{S}_k(M,DS;\cO/\wp)$ such that $\phi\equiv\bar\phi_S\pmod{\wp}$ and $\bar\phi_S(U_\ell)=\epsilon_\ell \ell^\frac{k-2}{2}$ for all primes $\ell\,|\,S$. 
\item There exist a number field with ring of integers $\cO_S\supset\cO$, a prime $\wp_S$ of $\cO_S$ above $\wp$ and ${\phi}_S\in\mathcal S_k(M,DS;\cO_{S,\wp_S})$ such that $\phi\equiv {\phi}_S\pmod{\wp_S}$, where $\cO_{S,\wp_S}$ is the completion of $\cO_S$ at $\wp_S$.
\end{enumerate} 
\end{theorem}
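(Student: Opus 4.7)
The plan is to argue by induction on the number $\omega(S)$ of prime factors of $S\in\mathcal{P}(\phi,\wp,K)$. The base case $\omega(S)=0$ (i.e.\ $S=1$) is vacuous: take $\bar\phi_1$ to be the reduction of $\phi$ modulo $\wp$ and $\phi_1\defeq\phi$. For the inductive step, it suffices to treat the case $S=S'\ell$ with $\ell$ a single admissible prime not dividing $S'$, since iterating produces the general $S$. Crucially, the parity of $\omega(DS'\ell)$ flips when one multiplies by $\ell$, so the construction transfers the eigenform between the definite and indefinite quaternion algebras of discriminants $DS'$ and $DS'\ell$; nevertheless, the shape of the argument is the same on both sides via the Jacquet--Langlands correspondence recalled in \S\ref{secJL}.

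For part (1), the heart of the matter is a higher-weight analogue of Ihara's lemma. Admissibility of $\ell$ implies (using $p\nmid\ell^2-1$) that the Hecke polynomial $X^2-\phi(T_\ell)X+\ell^{k-1}$ factors modulo $\wp$ as
\[
(X-\epsilon_\ell \ell^{k/2})(X-\epsilon_\ell \ell^{(k-2)/2})
\]
with two distinct roots. Localising $S_k(M,DS';\cO/\wp)$ at the maximal ideal $\mathfrak m_{\bar\phi}$ attached to $\bar\phi_{S'}$, the $T_\ell$-action decomposes this module into eigenspaces for the two roots above. The level-raising map then identifies the $\epsilon_\ell\ell^{(k-2)/2}$-component with the image of a degeneracy map from $S_k(M,DS'\ell;\cO/\wp)_{\mathfrak m_{\bar\phi}}$, producing an eigensystem $\bar\phi_S$ with the required $U_\ell$-eigenvalue. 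Non-triviality of the relevant localised cokernel (equivalently, existence of the congruence) is the geometric content of \cite{wang}: in the indefinite case it follows from Ribet-style analysis of the bad reduction at $\ell$ of the Shimura curve and of its Kuga--Sato cover, while in the definite case it follows from strong approximation arguments on the Gross curve. Here Assumption \ref{ass} enters decisively: condition (2) ensures residual irreducibility (hence that $\mathfrak m_{\bar\phi}$ is non-Eisenstein), while conditions (3)--(5) rule out congruences that could cause the localisation to collapse at primes dividing $MDS'\ell$.

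For part (2), one lifts $\bar\phi_S$ to characteristic zero by the Deligne--Serre lemma recalled in \S\ref{quaternionic-subsubsec}: enlarging $\cO$ to the ring of integers $\cO_S$ of a suitable finite extension and choosing a prime $\wp_S$ of $\cO_S$ above $\wp$, one obtains $\phi_S\in\mathcal{S}_k(M,DS;\cO_{S,\wp_S})$ whose reduction modulo $\wp_S$ coincides with (the image of) $\bar\phi_S$. Combined with part (1), this immediately yields $\phi_S\equiv\phi\pmod{\wp_S}$ in the sense of Definition \ref{congruent-def}.

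The main obstacle is the higher-weight Ihara-type statement and, concomitantly, the verification that the $\mathfrak m_{\bar\phi}$-localised degeneracy map is non-zero on the side of the enlarged discriminant. This is precisely where one invokes Wang's geometric analysis of the integral models of Shimura curves and their Kuga--Sato covers at the auxiliary prime $\ell$; the hypothesis $p>k+1$ from (H-$p$) is essential there in order to control the Hodge--Tate weights and to keep the relevant étale cohomology sheaves torsion-free, so that the comparison between the characteristic zero and residual pictures is faithful.
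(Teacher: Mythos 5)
Your proposal is correct and structurally identical to the paper's own argument: both reduce to the single-prime case (the paper ``picks a prime $\ell\,|\,S$ and recurses'', you ``induct on $\omega(S)$'') and then invoke Wang's level-raising theorems together with the Deligne--Serre lifting for part (2); your extra discussion of Ihara's lemma, the factorisation of the Hecke polynomial at $\ell$, and the degeneracy maps is a faithful unwinding of what lies inside \cite{wang} rather than a different route. One small addition the paper makes explicit that you leave implicit is the check that each intermediate form $\phi_{S'}$ again satisfies Assumption \ref{ass} (so that Wang's results and the admissibility of the next prime are still applicable); the paper notes this follows because the only conditions possibly affected concern the new prime $\ell$, and admissibility gives $\ell\not\equiv\pm1\pmod p$, which guarantees conditions (3)--(4) at $\ell$.
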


\begin{proof}[Sketch of proof] Pick a prime number $\ell\,|\,S$. If $\cB$ is indefinite, then apply \cite[Theorem 2.9]{wang}, while if $\cB$ is definite, then see the proof of \cite[Theorem 2.12]{wang}; in both cases we obtain a quaternionic eigenform as in the statement, which also satisfies Assumption \ref{ass} (all conditions are clearly the same except those at $\ell$, but, since $\ell$ is admissible, we have $\ell\not\equiv \pm 1\pmod{p}$). Applying these results recursively for all primes dividing $S$ concludes the proof. \end{proof}

\begin{notation} \label{JL-notation}
Given $T\in\mathcal{P}^\mathrm{indef} $, from here on we set $f_T\defeq\JL^{-1}(\phi_T)$ with $\JL$ as in \S \ref{secJL} and $\phi_T$ as in Theorem \ref{levelraising}. 
\end{notation}

\subsection{Selmer groups} 

Let $\phi\in\mathcal{S}_k(M,D;\cO_\wp)$. Let $v$ be a prime of $K$, let $K_v$ be the completion of $K$ at $v$, let $G_{K_v}\defeq\Gal(\bar{K}_v/K_v)$ be the absolute Galois group of $K_v$ (which can be identified with the decomposition subgroup of $G_K$ at $v$) and let $I_v\subset G_{K_v}$ be the inertia subgroup of $G_{K_v}$. Fix $S\in \mathcal{P}=\mathcal{P}(\phi,\wp,K)$. Let us set either $\cM\defeq T_{\phi,\wp}^\dagger$ or $\cM\defeq A_{\phi,\wp}^\dagger[\wp]\simeq T^\dagger_{\phi,\wp}/\wp\, T^\dagger_{\phi,\wp}$ and keep Assumption \ref{ass} in force. Accordingly, we shall view $\cM$ either as an $\cO_\wp$-module or as an $\cO/\wp$-vector space: in what follows, the letter $\RR$ will stand either for $\cO_\wp$ or for $\cO/\wp$, leaving it to the context to make clear which ring we are working with.

\subsubsection{Primes not dividing $pDS$} 

Let $v\nmid DSp$ be a prime of $K$. The \emph{finite} local condition at $v$ is 
\[ H^1_\mathrm{fin}(K_v,\cM)\defeq H^1\bigl(G_{K_v}/I_v,\cM^{I_v}\bigr). \]
It is well known that $H^1_\mathrm{fin}\bigl(K_v,A_{\phi,\wp}^\dagger[\wp]\bigr)$ and $H^1_\mathrm{fin}\bigl(K_v,T_{\phi,\wp}^\dagger/\wp\, T_{\phi,\wp}^\dagger\bigr)$ are exact annihilators of each other under the local Tate pairing. 

\subsubsection{Primes dividing $p$} 

Let $v\,|\,p$ be a prime of $K$. Since $p$ is ordinary, the restriction to the decomposition group $G_{K_v}$ of the action of $G_{\Q}$ on the rank $2$ free $\RR$-module $\cM$ admits a $G_{K_v}$-stable submodule $\cM ^+$ and a short exact sequence
\[ 0\longrightarrow \cM ^+\longrightarrow \cM \longrightarrow \cM ^-\longrightarrow 0 \]
such that $\cM ^\pm$ have both rank $1$ over $R$. After fixing an $\RR$-basis $\{v_1,v_2\}$ of $\cM$ with $v_1\in\cM ^+$, there is an isomorphism 
\[ \cM \simeq \begin{pmatrix}\eta^{-1}\chi_\cyc^{k/2}&*\\0&\eta\chi_\cyc^{-\frac{k-2}{2}}\end{pmatrix} \]
for an unramified character $\eta:G_{K_v}\rightarrow \cO_\wp^\times$ such that $\eta(\Frob_p)=\alpha_p$; here $\Frob_p$ denotes an arithmetic Frobenius at $p$, $\alpha_p$ is the $p$-adic unit root of Hecke polynomial $X^2-\phi(T_p)X+p^{k-1}$ at $p$ and $\chi_\cyc:G_{\Q_p}\rightarrow\Z_p^\times$ is the $p$-adic cyclotomic character. Define 
\[ H^1_\ord(K_v,\cM )\defeq\im\Bigl(H^1(K_v,\cM^+)\longrightarrow H^1(K_v,\cM)\Bigr). \] 
It is well known that $H^1_\ord\bigl(K_v,A_{\phi,\wp}^\dagger\bigr)$ and $H^1_\ord\bigl(K_v,T_{\phi,\wp}^\dagger\bigr)$ are exact annihilators of each other under the local Tate pairing. 

\subsubsection{Primes dividing $D$} \label{local D} 

Let $\ell\,|\,D$ be a prime number; since $\ell$ is inert in $K$, there is a unique prime of $K$, still denoted by $\ell$, lying over it. The restriction of $\cM$ to $G_{\Q_\ell}$ is 
\[ {\cM|}_{G_{\Q_\ell}}\simeq \begin{pmatrix}\chi_\cyc&0\\0&\mathbf{1}\end{pmatrix}, \] 
where $\mathbf{1}$ is the trivial character (\cite{carayol}). It follows that there is a unique line $\cM^+\subset\cM$ on which $G_{\Q_p}$ acts either by $\chi_\cyc$ or by $\chi_\cyc\tau_\ell$, where $\tau_\ell$ is the non-trivial element of $\Gal(K_\ell/\Q_\ell)$. Now we define 
\[ H^1_\ord(K_v,\cM)\defeq\im\Bigl(H^1(K_v,\cM^+)\longrightarrow H^1(K_v,\cM)\Bigr). \]

\subsubsection{Primes dividing $S$}\label{local S} 

Let $\ell\,|\,S$ be a prime number; in particular, $\ell$ is inert in $K$ and we denote by the same symbol the unique prime of $K$ above $\ell$. Define the \emph{singular part} of $H^1(K_\ell,\cM)$ as 
\[ H^1_\mathrm{sing}(K_\ell,\cM)\defeq{H^1(I_{K_\ell},\cM)}^{G_{K_\ell}/I_{K_\ell}}, \]
where $K_\ell$ is the completion of $K$ at the unique prime above $\ell$ and $I_{K_\ell}$ is the inertia subgroup of the absolute Galois group $G_{K_\ell}$ of $K_\ell$. Write 
\[ \partial_\ell:H^1(K_\ell,\cM)\longrightarrow H^1_\mathrm{sing}(K_\ell,\cM) \] 
for the \emph{residue} (\emph{i.e.}, restriction) map and set $H^1_\fin(K_\ell,\cM)\defeq\ker(\partial_\ell)$. As explained in \cite[\S1.3]{ChHs2}, both $H^1_\mathrm{sing}(K_\ell,\cM)$ and $H^1_\fin(K_\ell,\cM)$ are free of rank $1$ over $\RR$ and there is a canonical splitting 
\begin{equation} \label{splitting-eq}
H^1(K_\ell,\cM)\simeq H^1_\fin(K_\ell,\cM)\oplus H^1_\sing(K_\ell,\cM). 
\end{equation}
Let 
\begin{equation} \label{v-ell-eq}
v_\ell:H^1(K_\ell,\cM)\longrightarrow H^1_\mathrm{fin}(K_\ell,\cM) 
\end{equation}
be the map induced by \eqref{splitting-eq}. Abusing notation, we also write $\partial_\ell$ and $v_\ell$ for the compositions of $\partial_\ell$ and $v_\ell$, respectively, with the localisation map $\mathrm{loc}_\ell:H^1(K,\cM)\rightarrow H^1(K_\ell,\cM)$. 

The groups $H^1_\mathrm{fin}(K_\ell,\cM)$ and $H^1_\mathrm{sing}(K_\ell,\cM)$ also admit the following description. 
There is a canonical splitting $\cM\simeq\cM^+\oplus\cM^-$ with $\cM^-\defeq(\Frob_\ell-\epsilon_\ell \ell)\cM$ (\emph{cf.} the proof of \cite[Lemma 1.5]{ChHs2}). The \emph{ordinary} local condition at $\ell$ is 
\begin{equation} \label{ordS}
H^1_\ord(K_\ell,\cM)\defeq H^1(K_\ell,\cM^+),
\end{equation}
which is maximal isotropic with respect to the local Tate pairing. Then there are isomorphisms
\begin{equation} \label{ordS-2}
H^1_\ord(K_\ell,\cM)\simeq H^1_\mathrm{sing}(K_\ell,\cM),\quad H^1_\mathrm{fin}(K_\ell,\cM)\simeq H^1(K_\ell,\cM^-). 
\end{equation}

\subsubsection{Selmer groups} 

For every prime $v$ of $K$, let 
\[ \loc_v:H^1(K,\cM)\longrightarrow H^1(K_v,\cM) \]
be the localisation map at $v$. The \emph{Selmer group} $\Sel_S(K,\cM)$ of $\cM$ over $K$ is the subgroup of $H^1(K,\cM)$ consisting of those elements $s$ such that  
\begin{itemize}
\item $\loc_v(s)\in H^1_\mathrm{fin}(K_v,\cM)$ if $v\nmid pDS$; 
\item $\loc_v(s)\in H^1_\mathrm{ord}(K_v,\cM)$ if $v\,|\,pDS$. 
\end{itemize}
When $S=1$, we simply write $\Sel(K,\cM)$ for $\Sel_1(K,\cM)$. Let $T$ be a product of distinct primes with $(T,D_KMDp)=1$. The \emph{relaxed $S$-Selmer group $\Sel_{S,(T)}(K,\cM)$ of $\cM$ at $T$ over $K$} is the subgroup of $H^1(K,\cM)$ consisting of those elements $s$ such that  
\begin{itemize}
\item $\loc_v(s)\in H^1_\mathrm{fin}(K_v,\cM)$ if $v\nmid pDST$;
\item $\loc_v(s)\in H^1_\ord(K_v,\cM)$ if $v\,|\,pDS$. 
\end{itemize}
For a (possibly infinite) set $\mathscr D$ of prime numbers, the \emph{relaxed $\mathscr D$-Selmer group $\Sel_{S,(\mathscr D)}(K,\cM)$ of $\cM$ at $T$ over $K$} is defined in a completely analogous way. We also denote by $\Sel_{S,[T]}(K,\cM)$ the subgroup of $\Sel_{S,(T)}(K,\cM)$ consisting of those classes $c$ such that 
\begin{itemize}
\item $\loc_v(c)\in H^1_\sing(K_v,\cM)$ for all $v\,|\,T$. 
\end{itemize}
For $T'$ a product of distinct primes with $(T',D_KMDTp)=1$, we define $\Sel_{S,(T),[T']}(K,V)$ to be the subgroup of $\Sel_{S,(TT')}(K,V)$ consisting of those classes $c$ such that 
\begin{itemize}
\item $\loc_v(c)\in H^1_\sing(K_v,V)$ for all $v\,|\,T'$. 
\end{itemize}
For every $\Gal(K/\Q)$-module $\cM$, we write $\cM^\pm$ for the $\pm1$-eigenspaces of $\cM$ under the action of the generator of $\Gal(K/\Q)$. In particular, since $p$ is odd, there is a splitting
\[ \Sel_{S}(K,\cM)=\Sel_{S}(K,\cM)^+\oplus\Sel_{S}(K,\cM)^-, \]
and analogously for the other Selmer groups.

\subsubsection{Comparison of Selmer groups} 
%Let $f$, $\phi_f$ and $N=N^+N^-$ be as in \S\ref{setting}; in using the quaternionic notation introduced before, we usually write the symbol $\wp$ for $\p$.  
The notation introduced in Theorem \ref{levelraising} is in force. If $v$ is a prime of $K$, then let $K_v$ be the completion of $K$ at $v$, let $G_{K_v}\defeq\Gal(\bar{K}_v/K_v)$ be the absolute Galois group of $K_v$, which can be identified with the decomposition subgroup of $G_K$ at $v$, and let $I_v\subset G_{K_v}$ be the inertia subgroup of $G_{K_v}$. Fix $S\in \mathcal{P}=\mathcal{P}(\phi,\wp,K)$ and let $ \phi_S$ be as in Theorem \ref{levelraising}.
% let $f_S=\JL^{-1}(\phi_S)$ and let $V_{f_S,\p_S}^\dagger$ be the self-dual twist of the Deligne representation attached to $f_S$. 
%For every integer $n\geq 1$, let $\Sel(K,A_{{f}_S[\p_S^n]})\subset H^1(K,A_{{f}_S}[\p_S^n])$ denote the $\p_S^n$-Selmer group of $A_{\tilde f_S}$ over $K$, where $A_{{f}_{S}}[\p_S^n]$ is the $\p_S^n$-torsion of $A_{{f}_{S}}$, defined by means of the local conditions given by the images of local Kummer maps. Then we define $\Sel(K,A_{ f_S}\p_S^\infty)\defeq\varinjlim_n\Sel\bigl(K,A_{ f_S\p_S^n]}\bigr)$ (the limit is taken with respect to the canonical inclusion maps $A_{f_S}[\p_S^n]\subset A_{f_S}[\p_S^{n+1}]$). 
Set $k_\wp\defeq\cO/\wp$ and $k_{\wp_S}\defeq\cO_S/\wp_S$; by construction, $k_{\wp_S}$ is a finite extension of $k_\wp$. Note that $S$ appears as a divisor of the discriminant $DS$ of the quaternion algebra on which $\phi_S$ is defined; it appears also as an admissible integer for $\phi$. 

The next lemma gives a comparison between Selmer groups.

\begin{lemma} \label{lemma selmer} 											
$\Sel_S\bigl(K,A^\dagger_{\phi,\wp}[\wp]\bigr)\otimes_{k_\wp}k_{\wp_S}\simeq\Sel\bigl(K,A^\dagger_{ \phi_S,\wp_S}[\wp_S]\bigr)$. 
\end{lemma}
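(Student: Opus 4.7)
The plan is to show that the two Selmer groups are computed as kernels of the same system of local restrictions, once one knows that the two residual Galois representations agree (after extending scalars). So the proof splits naturally into two parts: first, identify the underlying $G_\Q$-modules; second, check that the local conditions match prime by prime.

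For the first part, I would argue as follows. By Theorem \ref{levelraising} we have $\phi\equiv\phi_S\pmod{\wp_S}$. Passing to the associated residual Galois representations, the Hecke congruence at all primes $\ell\nmid MDSp$ translates, via the characteristic-polynomial relation for $\rho_{\bar\phi}$ and $\rho_{\bar\phi_S}$ recalled in \S \ref{galois-subsec}, into equality of traces on a dense set of Frobenius elements. Combined with Chebotarev and the absolute irreducibility of $\overline{T}^\dagger_{\phi,\wp}$ restricted to $\Gal(\bar\Q/\Q(\sqrt{p^*}))$ from Assumption \ref{ass}(2) (which forces semisimplicity and uniqueness up to isomorphism), this yields a $G_\Q$-equivariant isomorphism
\[
A^\dagger_{\phi,\wp}[\wp]\otimes_{k_\wp}k_{\wp_S}\;\simeq\;A^\dagger_{\phi_S,\wp_S}[\wp_S].
\]
Since $k_{\wp_S}/k_\wp$ is a flat (in fact free) extension of fields, $-\otimes_{k_\wp}k_{\wp_S}$ commutes with formation of $H^1$, with localisation maps, and with kernels, so $H^1(K,A^\dagger_{\phi,\wp}[\wp])\otimes_{k_\wp}k_{\wp_S}\simeq H^1(K,A^\dagger_{\phi_S,\wp_S}[\wp_S])$, and the same at each place.

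For the second part, I would check at each bad place that the local condition defining $\Sel_S(K,A^\dagger_{\phi,\wp}[\wp])$ is carried to the local condition defining $\Sel(K,A^\dagger_{\phi_S,\wp_S}[\wp_S])$ under the above isomorphism. At primes $v\nmid pDS$ both conditions are the unramified one $H^1_\fin$, so they are preserved by any isomorphism of Galois modules. At primes $v\mid p$, the congruence and Assumption \ref{ass}(1) imply $\phi_S(T_p)\equiv\phi(T_p)\pmod{\wp_S}$ is a $\wp_S$-unit, so $\phi_S$ is ordinary and the distinguished line $\cM^+$ for $\phi_S$ reduces modulo $\wp_S$ to the one for $\phi$ extended to $k_{\wp_S}$; hence the ordinary conditions correspond. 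At primes $\ell\mid D$, both modules exhibit the local structure from \S \ref{local D} (since $\phi$ is $D$-new and $\phi_S$ is $DS$-new at $\ell$), so the ordinary lines coincide.

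The only delicate matching is at primes $\ell\mid S$, and this will be the main obstacle. For $\phi_S$, the prime $\ell$ divides the quaternionic discriminant $DS$, so $H^1_\ord(K_\ell,A^\dagger_{\phi_S,\wp_S}[\wp_S])$ is defined using the unramified-twist-of-cyclotomic filtration described in \S \ref{local D}. For $\phi$, the prime $\ell$ is admissible and $H^1_\ord(K_\ell,A^\dagger_{\phi,\wp}[\wp])$ is defined through the decomposition $\cM^-=(\Frob_\ell-\epsilon_\ell\ell)\cM$ from \S \ref{local S} and \eqref{ordS}. The point is that, by admissibility (Definition \ref{admissible-def}) together with the Eichler--Shimura relation modulo $\wp$, $\overline{T}^\dagger_{\phi,\wp}$ at $\ell$ is semisimple with Frobenius eigenvalues $\epsilon_\ell\ell$ and $\epsilon_\ell$, so it decomposes as an unramified twist of $\chi_\cyc\oplus\mathbf 1$; by Theorem \ref{levelraising}(1), $\bar\phi_S(U_\ell)=\epsilon_\ell\ell^{(k-2)/2}$, which pins down exactly the same filtration on $A^\dagger_{\phi_S,\wp_S}[\wp_S]$ via the local description of $\ell$-new forms. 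Hence the two ordinary lines agree under the isomorphism of residual representations, and the corresponding $H^1_\ord$'s match.

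Putting these compatibilities together, $\Sel_S(K,A^\dagger_{\phi,\wp}[\wp])\otimes_{k_\wp}k_{\wp_S}$ and $\Sel(K,A^\dagger_{\phi_S,\wp_S}[\wp_S])$ are cut out inside the same global cohomology group by the same local conditions, giving the claimed isomorphism.
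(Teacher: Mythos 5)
Your proof is correct and follows essentially the same route as the paper's (very terse) argument: the paper's proof also reduces to (i) identifying the residual $G_\Q$-modules after extension of scalars to $k_{\wp_S}$, (ii) noting that Selmer-group formation commutes with the flat base change $k_\wp\to k_{\wp_S}$, and (iii) observing, via \eqref{ordS}, that the ordinary local condition for $\phi$ at a prime $\ell\mid S$ agrees with the ordinary local condition for $\phi_S$ at $\ell$ (where $\ell$ divides the quaternionic discriminant $DS$). You merely spell out the details the paper leaves implicit (Chebotarev plus absolute irreducibility for the residual isomorphism, the routine matching at places away from $S$, and the Frobenius-eigenvalue computation behind \eqref{ordS}).
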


\begin{proof} By \eqref{ordS}, at primes dividing $S$ the ordinary local conditions for $\phi_S$ in \S\ref{local D} are the same 
as the ordinary local conditions for $\phi$ in \S\ref{local S}. Furthermore, there are isomorphisms of $k_{\wp_S}$-vector spaces $A_{\phi_S}[\wp_S]\simeq A_{\phi,\wp}\otimes_{k_\wp}k_{\wp_S}$ and
\[ \Sel_S\bigl(K,A^\dagger_{\phi,\wp}[\wp]\bigr)\otimes_{k_\wp}k_{\wp_S}\simeq\Sel_S\bigl(K,A^\dagger_{\phi,\wp}[\wp]\otimes_{k_\wp}k_{\wp_S}\bigr). \] 
The result follows. \end{proof}

Now we compare $\Sel_{{S}}\bigl(K,A_{\phi,\wp}^\dagger[\wp]\bigr)$ and $\Sel_{{S\ell}}\bigl(K,A_{\phi,\wp}^\dagger[\wp]\bigr)$, and also $\Sel_{{S}}\bigl(K,A_{\phi,\wp}^\dagger[\wp]\bigr)^\pm$ and $\Sel_{{S\ell}}\bigl(K,A_{\phi,\wp}^\dagger[\wp]\bigr)^\pm$, for an admissible prime $\ell\nmid S$.
  
\begin{lemma} \label{lemma7.2} 
Let $\ell\nmid S$ be an admissible prime. 
\begin{enumerate}
\item $\Sel_{{S\ell}}\bigl(K,A_{\phi,\wp}^\dagger[\wp]\bigr)^\pm=\ker\Bigl(\loc_\ell:\Sel_{{S}}\bigl(K,A_{\phi,\wp}^\dagger[\wp]\bigr)^\pm\rightarrow H^1_\fin\bigl(K_\ell,A_{\phi,\wp}^\dagger[\wp]\bigr)\!\Bigr)$.
\item $\dim_{k_\wp}\Sel_{{S\ell}}\bigl(K,A_{\phi,\wp}^\dagger[\wp]\bigr)^\pm\geq\dim_{k_\wp}\Sel_{{S}}\bigl(K,A_{\phi,\wp}^\dagger[\wp]\bigr)^\pm-1$. 
\item If $\loc_\ell:\Sel_S\bigl(K,A_{\phi,\wp}^\dagger[\wp]\bigr)^\pm\rightarrow H^1_\mathrm{fin}\bigl(K_\ell,A_{\phi,\wp}^\dagger[\wp]\bigr)$ is surjective, then
\[ \dim_{k_\wp}\Sel_{S\ell}\bigl(K,A_{\phi,\wp}^\dagger[\wp]\bigr)^\pm=\dim_{k_\wp}\Sel_S\bigl(K,A_{\phi,\wp}^\dagger[\wp]\bigr)^\pm-1. \] 
\end{enumerate}
Analogous results hold true for $\Sel_{{S}}\bigl(K,A_{\phi,\wp}^\dagger[\wp]\bigr)$ and $\Sel_{{S\ell}}\bigl(K,A_{\phi,\wp}^\dagger[\wp]\bigr)$.
\end{lemma}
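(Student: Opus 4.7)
My plan is to deduce the lemma from a careful local-global analysis at the admissible prime $\ell$. The Selmer groups $\Sel_S^\pm$ and $\Sel_{S\ell}^\pm$ coincide in every local condition except at $\ell$: the former imposes $\loc_\ell(c) \in H^1_\fin(K_\ell,\cM)$ while the latter imposes $\loc_\ell(c) \in H^1_\ord(K_\ell,\cM)$, where I write $\cM = A_{\phi,\wp}^\dagger[\wp]$. By the canonical splitting \eqref{splitting-eq} together with the identification \eqref{ordS-2}, these two conditions cut out complementary rank-$1$ $k_\wp$-lines of $H^1(K_\ell,\cM)$, so in particular $H^1_\fin(K_\ell,\cM) \cap H^1_\ord(K_\ell,\cM) = 0$.

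For part (1), the inclusion $\ker\bigl(\loc_\ell \colon \Sel_S^\pm \to H^1_\fin(K_\ell,\cM)\bigr) \subseteq \Sel_{S\ell}^\pm$ is formal: any $c \in \Sel_S^\pm$ with $\loc_\ell(c) = 0$ trivially satisfies the ordinary local condition at $\ell$, and the other local conditions agree with those of $\Sel_S^\pm$. The reverse inclusion asserts that every $c \in \Sel_{S\ell}^\pm$ also has $\loc_\ell(c) = 0$; together with $\loc_\ell(c) \in H^1_\ord$ and the triviality of $H^1_\fin \cap H^1_\ord$, this would place $c$ in $\Sel_S^\pm$ and in the kernel of $\loc_\ell$. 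I expect to prove this via Poitou--Tate / Wiles' formula for the self-dual module $\cM$: since both $H^1_\fin$ and $H^1_\ord$ are maximal isotropic under the local Tate pairing, the global duality comparison between the relaxed Selmer group $\Sel_{S,(\ell)}(K,\cM)$, its strict counterpart (where $\loc_\ell$ is forced to vanish), $\Sel_S^\pm$, and $\Sel_{S\ell}^\pm$ forces the image of $\loc_\ell$ on $\Sel_{S,(\ell)}$ to avoid $H^1_\ord \setminus \{0\}$, which is equivalent to $\loc_\ell$ vanishing on $\Sel_{S\ell}^\pm$.

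Parts (2) and (3) are immediate consequences of part (1) and the fact that $\dim_{k_\wp} H^1_\fin(K_\ell,\cM) = 1$: a linear map into a $1$-dimensional $k_\wp$-space has kernel of codimension at most $1$, and codimension exactly $1$ precisely when the map is surjective. The analogous statements for the Selmer groups $\Sel_S$ and $\Sel_{S\ell}$ without the $\pm$ decomposition follow verbatim by the same argument, since every construction is equivariant under the action of $\Gal(K/\Q)$. The main obstacle I foresee is precisely the reverse inclusion in (1): ruling out classes $c \in \Sel_{S\ell}^\pm$ with $\loc_\ell(c) \neq 0$ in $H^1_\ord$ is an essentially global assertion whose proof should crucially use the self-duality of $\cM$ and the maximality of both $H^1_\fin$ and $H^1_\ord$ as isotropic subspaces at the admissible prime $\ell$.
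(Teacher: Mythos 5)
Your structure is right: parts (2) and (3) are linear algebra once (1) is established, the inclusion $\ker(\loc_\ell)\subseteq\Sel_{S\ell}^\pm$ is formal, and the reverse inclusion $\Sel_{S\ell}^\pm\subseteq\ker(\loc_\ell)$ --- equivalently, $\loc_\ell(\Sel_{S\ell}^\pm)=0$ --- is the substantive point, since nothing prevents a priori a class localising to $H^1_\ord\smallsetminus\{0\}$, which would lie in $\Sel_{S\ell}^\pm$ but not in $\Sel_S^\pm$. You are right to flag this.

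Where your plan is too optimistic is in the claimed consequence of duality. Poitou--Tate does give that $\loc_\ell\bigl(\Sel_{S,(\ell)}^\pm\bigr)$ is self-annihilating, hence a maximal isotropic line in the (two-dimensional) relevant $\pm$-part of $H^1(K_\ell,\cM)$; but the only two isotropic lines there are $H^1_\fin$ and $H^1_\ord=H^1_\sing$, and duality alone does not tell you which one you get. So the assertion that the image ``avoids $H^1_\ord\smallsetminus\{0\}$'' is not a formal consequence of Poitou--Tate; it is equivalent to $\loc_\ell\bigl(\Sel_{S,(\ell)}^\pm\bigr)=H^1_\fin$, which in turn is equivalent to the surjectivity hypothesis in part (3). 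The global argument that actually works is reciprocity: for $s\in\Sel_S^\pm$ and $t\in\Sel_{S\ell}^\pm$, the sum of local Tate pairings vanishes and the summands away from $\ell$ vanish by isotropy, so $\langle\loc_\ell(s),\loc_\ell(t)\rangle_\ell=0$; when $\loc_\ell(\Sel_S^\pm)=H^1_\fin$, the perfectness of the pairing $H^1_\fin\times H^1_\ord\to k_\wp$ forces $\loc_\ell(\Sel_{S\ell}^\pm)=0$, which is the missing inclusion.

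By contrast, the paper's proof is a one-line local identity: it reads the kernel of $\loc_\ell$ as the set of classes whose localisation lies in $H^1_\sing=H^1_\ord$, which is exactly the condition cutting out $\Sel_{S\ell}^\pm$, i.e.\ it implicitly treats $\Sel_{S\ell}^\pm$ as the kernel of the finite projection $v_\ell$ on the relaxed Selmer group $\Sel_{S,(\ell)}^\pm$. The reverse inclusion you worry about is thus elided rather than argued. Since every application of (1) in the paper is under the surjectivity hypothesis of (3), nothing breaks downstream, but your instinct that (1) as literally stated requires a global input is sound, and the reciprocity argument above --- not bare Poitou--Tate --- is the way to supply it.
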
 

\begin{proof} Since $\dim_{k_\wp}H^1_\fin\bigl(K_\ell,A_{\phi,\wp}^\dagger[\wp]\bigr)=1$, it suffices to show (1), \emph{i.e.}, that $\Sel_{S\ell}\bigl(K,A_{\phi,\wp}^\dagger[\wp]\bigr)^\pm$ is the kernel of the localisation map. The kernel of $\loc_\ell$ consists of those $s\in \Sel_{S}\bigl(K,A_{\phi,\wp}^\dagger[\wp]\bigr)^\pm$ such that the restriction of $\loc_\ell(s)$ belongs to $H^1_\sing\bigl(K_\ell,A_{\phi,\wp}^\dagger[\wp]\bigr)$, which is isomorphic to $H^1_\ord\bigl(K_\ell,A_{\phi,\wp}^\dagger[\wp]\bigr)$ by \eqref{ordS-2}, and the claim follows. The statements for $\Sel_{{S}}\bigl(K,A_{\phi,\wp}^\dagger[\wp]\bigr)$ and $\Sel_{{S\ell}}\bigl(K,A_{\phi,\wp}^\dagger[\wp]\bigr)$ can be proved in the same way. \end{proof}

%\begin{remark} \label{eigen-rem}
%Taking the direct sum of the $\pm$-eigenspaces, part (2) of Lemma \ref{lemma7.2} yields the inequality
%\[ \dim_{k_\wp}\Sel_{{S\ell}}\bigl(K,A_{\phi,\wp}^\dagger[\wp]\bigr)\geq\dim_{k_\wp}\Sel_{{S}}\bigl(K,A_{\phi,\wp}^\dagger[\wp]\bigr)^\pm-1\leq \]
%\end{remark}

%\subsubsection{Selmer groups in anticyclotomic extensions}

\subsection{Special points}  \label{secspecialpoints}

Here we introduce special points both in the indefinite case and in the definite case. Let us fix, as before, $M$, $D$, $p$ and recall that $p\nmid MD$. Let $K$ be an imaginary quadratic field where all the primes dividing $M$ split and all the primes dividing $D$ are inert. Define the set $\mathcal{S}_{M,D}(K)$ of \emph{special points} of $Y_0(M,D)$ as 
\[ \mathcal{S}_{M,D}(K)\defeq\hat{R}^\times\big\backslash\bigl(\hat{B}^\times\times\Hom(K,B)\bigr)\big/B^\times. \] 
By extending scalar, we view $\mathcal{S}_{M,D}(K)$ as a subset of $Y_0(M,D)(\C)$. Let $\cO_K$ be the ring of integers of $K$ and for any $c\geq1$ let $\cO_c\defeq\Z+c\cO_K$ be the order of $K$ of conductor $c$. If $x\in\mathcal{S}_{M,D}(K)$ is represented by a pair $(g,\varphi)$ such that $\varphi(K)\cap g^{-1}\hat{R}^\times g=\varphi(\mathcal{O}_c)$, then we say that $x$ has \emph{conductor $c$}. In the indefinite (respectively, definite) case, the special points in 
$\mathcal{S}_{M,D}(K)$ are called \emph{Heegner} (respectively, \emph{Gross}) \emph{points}. 

In both cases, $\mathcal{S}_{M,D}(K)$ is equipped with an action of $\Gal(K^\mathrm{ab}/K)$, where $K^\mathrm{ab}$ is the maximal abelian extension of $K$, that is defined as follows: if $\sigma\in\Gal(K^\mathrm{ab}/K)$ is represented by $\mathfrak{a}\in\hat{K}^\times$ via the geometrically normalised Artin map and $x$ is represented by $(g,\varphi)$, then $x^\sigma\in\mathcal{S}_{M,D}(K)$ is the point of $\mathcal{S}_{M,D}(K)$ represented by the pair $\bigl(g\hat{\varphi}(\mathfrak{a}),\varphi\bigr)$, where $\hat{\varphi}:\hat K\rightarrow \hat{B}$ is the map obtained from $\varphi$ by extending scalars to $\hat{\Z}$. In the indefinite case, this action coincides with the usual Galois actions on points, by Shimura's reciprocity law; furthermore, Heegner points of conductor $c$ are rational over the ring class field $H_c$ of $K$ of conductor $c$.

Following \cite[\S2.2]{ChHs2} and \cite[\S3.1]{wang} (see also \cite{CKL}, \cite{CL}), we make a specific choice of special points. 
As before, write $G$ for the algebraic group defined by $\cB^\times$; fix $c\geq 1$ with $(c,MD)=1$ and for each prime number $q$ define $\varsigma_q(c)\in G(\Q_q)$ by setting 
\[\varsigma_q(c)\defeq\begin{cases} 1 & \text{if $q\nmid Mc$}, \\[2mm]
 \delta_K^{-1}\smallmat{\theta}{\bar\theta}{1}{1} & \text{if $q\,|\,M$},\\[2mm]
\smallmat{q^n}{0}{0}{1} & \text{if $q\,|\,c$ splits in $K$ and $n=\val_q(c)$},\\[2mm]
\smallmat{1}{q^{-n}}{0}{1} & \text{if $q\,|\,c$ is inert in $K$ and $n=\val_q(c)$}.
\end{cases}\]
Set also $\varsigma(c)\defeq\bigl(\varsigma_q(c)\bigr)_q\in G\bigl(\mathds{A}^{(\infty)}\bigr)$. For any place $v$ of $\Q$, the \emph{Atkin--Lehner involution} $\tau_v\in G(\Q_v)$ is 
\[ \tau_v\defeq\begin{cases}
\smallmat{0}{1}{-M}{0} & \text{if $v\,|\,M$},\\[2mm]
J_\cB & \text{if $v\,|\,\infty D$},\\[2mm]
1 & \text{if $v\nmid MD$}.
\end{cases} \]
Finally, put $\tau_\cB\defeq{(\tau_v)}_v\in G(\mathds{A})$; with an abuse of notation, we sometimes denote by $\tau_\cB$ the element $(\tau_q{)}_q\in G\bigl(\mathds{A}^{(\infty)}\bigr)$. 

\subsection{CM elliptic curves} \label{CM-subsec}

Let $E$ be a fixed elliptic curve with CM by $\cO_K$, defined over the Hilbert class field $H_K$ of $K$; we also fix a complex analytic isomorphism $E(\C)\simeq \C/\cO_K$, which we will treat as an equality. 

Let $\mathcal{C}_{c}\defeq c^{-1}\mathcal{O}_{c}/\cO_K\simeq \Z/c\Z$ and $E_c\defeq E/\mathcal{C}_c$; then $E_c$ is an elliptic curve with CM by $\mathcal{O}_c$ equipped with a complex analytic uniformization $E_c(\C)\simeq\C/\cO_c$ such that the quotient map $\varphi_{c}:E\twoheadrightarrow E_c$ is the isogeny given by $[z]\mapsto [cz]$ as a map of complex tori. 

Fix a fractional $\cO_{c}$-ideal $\mathfrak{a}$, let $E_c[\mathfrak{a}]\subset E_c(\C)$ be the $\mathfrak{a}$-torsion subgroup of $E_c(\C)$ and form the quotient $E_\mathfrak{a}\defeq E_c/E_c[\mathfrak{a}]$; then $E_c[\mathfrak{a}]\simeq \mathfrak{a}^{-1}\cO_c/\cO_{c}$ and there is a complex analytic isomorphism $E_\mathfrak{a}(\C)\simeq \C/\mathfrak{a}^{-1}$. Finally, there is a canonical quotient map $\lambda_\mathfrak{a}:E_c\twoheadrightarrow E_\mathfrak{a}$ that, as a map of complex tori, is induced by the inclusion $\cO_c\subset\mathfrak{a}^{-1}$. 

\subsection{Heegner cycles}\label{HC} \label{cycles-subsec} 

Suppose we are in the \emph{indefinite} case. We require the isomorphisms $i_\infty$ in \eqref{iinfty} to further satisfy the condition   
$i_\infty(\theta)=\smallmat{\theta+\bar\theta}{-\theta\bar\theta}{1}{0}$. The action of $i_\infty(K)\subset\GL_2(\R)$ has two fixed points, say $\omega$ and $\bar\omega$, in $\mathcal H\cup\boldsymbol\rho(\mathcal H)$, where $\boldsymbol\rho$ is complex conjugation. Suppose that $\omega\in\mathcal H$ and $\bar\omega=\boldsymbol\rho(\omega)$. Let $c\geq 1$ be an integer prime to $MD$. Using the first bijection in \eqref{bijections-eq}, the \emph{Heegner point} of conductor $c$ is $P_c\defeq\bigl[\bigl(\varsigma(c)\tau_\cB,\omega\bigr)\bigr]\in Y_0(M,D)(\C)$. Then the Galois action on such points is given by $P_c^{\sigma_\mathfrak{a}}\defeq\bigl[\bigl(a\varsigma(c)\tau_\cB,\omega\bigr)\bigr]$ (as in \S\ref{sec2.1}, we write just $a$ for $\psi_K(a)$). Let $\widetilde{P}_c$ be a lift of $P_c$ to ${X}_d(M,D)$. 

Heegner cycles in ${X}_d(M,D)$ are defined as certain cycles fibered over $\widetilde{P}_c$; when $D=1$ we follow the  construction in \cite{Nek} and \cite{LV}, while when $D>1$ we use alternative constructions in \cite{BesserCM}, \cite{Chida}, \cite{E-dVP}. Let $D_c=c^2D_K$ be the conductor of the order $\cO_c$. To simplify the notation, set $r\defeq k-2$ and $u\defeq r/2$; also, let $W_{k,d}\defeq W_{k,d}(M,D)$, $\mathscr{A}_d\defeq A_d(M,D)$ and $X_d\defeq X_d(M,D)$; for $D=1$ we also write $\mathscr{E}_d$ for $\mathscr{A}_d$, to emphasize that in this case the universal object is an elliptic curve. We use the symbol $i_{\widetilde{P}_\mathfrak{a}}$ for the inclusion of the fiber over $\widetilde{P}_\mathfrak{a}$ in the universal object; in other words, $i_{\widetilde{P}_\mathfrak{a}}:E_\mathfrak{a}\hookrightarrow \mathscr{E}_d$ for $D=1$ and $i_{\widetilde{P}_\mathfrak{a}}:A_\mathfrak{a}=E_\mathfrak{a}\times E\hookrightarrow \mathscr{A}_d$ for $D>1$. 

The Kuga--Sato variety $W_{k,d}$ can be equipped with several projectors; in particular, there is a projector $\epsilon_W$ inducing isomorphisms 
\begin{equation}\label{projectors} \epsilon_W H^*_\et\bigl(\overline{W}_{k,d},\Z_p\bigr)\overset\simeq\longrightarrow \epsilon_W H^{k-1}_\et\bigl(\overline{W}_{k,d},\Z_p\bigr)\overset\simeq\longrightarrow H^1_\et\bigr(\overline{X}_d,j_*\Sym^r(\R^1\pi_{d*}\Z_p)\bigr), 
\end{equation}
where if $D>1$, then $\pi_d:A_d(M,D)\rightarrow X_d(M,D)$ is the universal abelian surface and $j$ is the identity, while if $D=1$, then $\pi_d:E_{d}(M)\rightarrow Y_d(M)$ is the universal elliptic curve and $j:Y_d(M)\hookrightarrow X_d(M)$ is the canonical inclusion (see, \emph{e.g.}, \cite[Lemma 2.2]{wang} and the references in \cite{wang}). Let $\epsilon_d$ be the projector associated with the covering map $X_d(M,D)\rightarrow X_0(M,D)$ (\emph{cf.} \cite[Lemma 2.1]{wang}). We fix a prime number $p$ such that 
\begin{itemize}
\item $p\nmid MDdc\phi(d)(k-2)!$. 
\end{itemize}
For a variety $V$ defined over a number field $L$, write $\mathrm{CH}^{i}_0(V/L)$ for the abelian group of homologically trivial cycles of codimension $i$ in $V$ that are rational over $L$, modulo rational equivalence; moreover, for a ring $R$ set $\mathrm{CH}^{i}_0(V/L{)}_R\defeq\mathrm{CH}^{i}_0(V/L)\otimes_\Z R$. 
%We assume from now on that $d$ is a product of primes that are split in $K$. Fix a decomposition $M=\mathfrak{M}\overline{\mathfrak{M}}$.

%==========================
%
%{\color{red} NON CHIARO SE DOBBIAMO USARE SOLO I CICLI $\Delta_c$ O ANCHE I $\Delta_\mathfrak{a}$. NON MI E' NEPPURE CHIARO SE SIA NECESSARIO SPECIFICARE LA SCELTA DELLE STRUTTURE DI LIVELLO (CREDO CHE NON SIA NECESSARIO: NON DOBBIAMO FARE CONTI SPECIFICI COME IN CASTELLA-HSIEH).}
%
%==========================
%

In the notation of \S\ref{sec2.1.1}, if $D=1$, then $\widetilde{P}_c$ is represented by a triple $(E_c,C_c,\nu_d)$. For each fractional ideal $\mathfrak{a}$, let $\sigma_\mathfrak{a}\in\Gal(H_c/K)\simeq\Pic(\mathcal{O}_c)$ be the element corresponding to $\mathfrak{a}$ via the geometrically normalised Artin map; then $\widetilde{P}_c^{\sigma_\mathfrak{a}}$ is represented by a triple $\bigl(E_\mathfrak{a},C_c^{(\mathfrak{a})},\nu_d^{(\mathfrak{a})}\bigr)$ (and one can check that $C_c^{(\mathfrak{a})} =\lambda_\mathfrak{a}(C_c)$ and $\nu_d^{(\mathfrak{a})}=\lambda_\mathfrak{a}\circ\nu_d$, which we do not need in the following; see, \emph{e.g.}, \cite[\S2.4]{CH}). 
%\footnote{\color{red} Alternatively if we need to specify: $(E_c,\langle\eta_c^{(\mathfrak{M})}\rangle,\eta_c^{(d)})$ (in the notation of \S\ref{sec2.1.1}); with an abuse of notation, we write this as $(E_c,\eta_c)$. More generally, for each fractional $\mathcal{O}_c$-ideal $\mathfrak{a}$ we may consider the point 
%$\widetilde{P}_\mathfrak{a}$ represented by a triple $(E_\mathfrak{a},\langle\eta_\mathfrak{a}^{(\mathfrak{M})}\rangle,\eta_\mathfrak{a}^{(d)})$ which we simply write as $(E_\mathfrak{a},\eta_\mathfrak{a})$. 
%Then (\emph{cf.} \cite[\S2.4]{CH}) we have 
%$\widetilde{P}_\mathfrak{a}=\widetilde{P}_c^{\sigma_\mathfrak{a}}$ where $\sigma_\mathfrak{a}\in\Gal(H_c/K)\simeq\Pic(\mathcal{O}_c)$ is the element corresponding to $\mathfrak{a}$ via the geometrically normalised Artin map.}
Let
\[ \Gamma_\mathfrak{a}\defeq\Bigl\{\bigl(\sqrt{-D_c}\cdot z,z\bigr)\;\Big|\;z\in E_\mathfrak{a}\Bigr\} \]
be the transpose of the graph of $\sqrt{-D_c}$ in $E_\mathfrak{a}\times E_\mathfrak{a}$.  

When $D>1$, $\widetilde{P}_c$ is represented by the quadruplet $(E_c\times E_c,\iota_c ,C_c,\nu_c)$. For each fractional $\mathcal{O}_c$-ideal $\mathfrak{b}$, the point $\widetilde{P}_c^{\sigma_\mathfrak{b}}$ is represented by a quadruplet $\bigl(E_\mathfrak{a}\times E_c,\iota^{(\mathfrak{a})},C_c^{(\mathfrak{a})},\nu_d^{(\mathfrak{a})}\bigr)$ for a unique fractional $\mathcal{O}_c$-ideal $\mathfrak{a}$ (the precise relation between $\mathfrak{a}$ and $\mathfrak{b}$, which is not needed in this paper, can be deduced from, \emph{e.g.}, \cite[\S2.3]{E-dVP}).  Let
\[ \Gamma_\mathfrak{a}\defeq\Bigl\{\bigl(\sqrt{-D_c}\cdot\lambda_\mathfrak{a}(z),z\bigr)\;\Big|\; z\in E_c\Bigr\} \]
be the transpose of the graph of $\sqrt{-D_c}\circ\lambda_{\mathfrak a}:E_c\rightarrow E_\mathfrak{a}$ in $E_\mathfrak{a}\times E_c$. 

Now let $D\geq1$. Set $\Upsilon_\mathfrak{a}\defeq i_{\widetilde{P}_c}(\Gamma_\mathfrak{a}^u)\subset W_{k,d}$. 

\begin{definition} \label{defHeegnercycle} 
The \emph{Heegner cycle} attached to  $\mathfrak{a}$ is $\Delta_\mathfrak{a}\defeq\epsilon_d\epsilon_W\Upsilon_\mathfrak{a}$.
\end{definition}

Under our assumptions on $p$, the cycle $\Delta_\mathfrak{a}$ belongs to $\epsilon_d\epsilon_W\mathrm{CH}^{k/2}_0(W_{k,d}/H_c)_{\Z_p}$  
by \eqref{projectors}. Set $\Delta_c\defeq\Delta_{\cO_c}$; when we need to specify $M$ and $D$, we write $\Delta_{\mathfrak{a},M,D}$ for $\Delta_\mathfrak{a}$.

\begin{remark} 
For any $D\geq 1$, observe that $\Delta_c$ is unchanged if $\Gamma_c$ is replaced by the divisor $\Gamma_c-(E_c\times\{0\})-cD_K(\{0\}\times E_c)$ (\emph{cf.} \cite[Ch. 2, (3.6)]{Nek2} for $D=1$ and \cite[Proposition 4.2]{E-dVP} for $D>1$). In particular, since $p\nmid c$, Definition \ref{defHeegnercycle} is consistent (up to $p$-adic units, due to a further normalisation in terms of certain self-intersection properties) with that of \cite[\S3.1]{wang}.  
\end{remark}

Let us fix a quaternionic eigenform $\phi\in\mathcal{S}_k(M,D;\cO_\wp)$ as in \S \ref{quaternionic-subsubsec}, where $\wp$ is a fixed prime ideal of $\cO$ of residual characteristic $p$. Taking the $\phi$-eigenspace in the cohomology group with coefficients in $\cO_\wp$ (which can be done thanks to \cite[Lemma 2.2]{Nek}), for any number field $L$ we obtain a $\wp$-adic Abel--Jacobi map 
\begin{equation}\label{AJ1}
\AJ_{\phi,\wp,L}:\epsilon_d\epsilon_W\CH^{k/2}_0(W_{k,d}\otimes_\Q L{)}_{\Z_p}\longrightarrow H^1\bigl(L,T_{\phi,\wp}^\dagger\bigr).
\end{equation}

\begin{definition}\label{defHC} 
The \emph{Heegner cycle} attached to $\mathfrak{a}$ and $\phi$ is $y _{\mathfrak{a},M,D}(\phi)\defeq\AJ_{\phi,\wp,H_c}\bigl(\Delta_{\mathfrak{a},M,D} \bigr)$.
\end{definition}

When $\mathfrak{a}=\cO_c$, we will write $y _{c,M,D}(\phi)$ in place of $y _{\mathfrak{a},M,D}(\phi)$. 

\begin{remark} \label{selmer-rem}
As explained in \cite[p. 2332]{wang}, the map $\AJ_{\phi,\wp,K}$ factors through $\Sel\bigl(K,T_{\phi,\wp}^\dagger\bigr)$. 
\end{remark}

\subsection{Theta elements} 

In the definite case, special cycles give rise to theta elements and, eventually, to anticyclotomic $p$-adic $L$-functions; we quickly review the subject as developed in \cite{CL}, \cite{ChHs2}, \cite{ChHs1}. Let $\phi\in \mathcal{S}_k(M,D;R)$ with $R$ as before. We assume that $\phi$ is \emph{primitive}, \emph{i.e.}, there is no $\phi'\in \mathcal{S}_k(M,D;R)$ such that $\phi=\wp\phi'$. We write $\Phi$ for the modular form associated with $\phi$, as discussed in \S\ref{sec:quateigen}. Let $c\geq1$ be an integer prime to $MDp$. Recall the notation in \S\ref{secspecialpoints} and define the \emph{Gross point} of conductor $c$ to be $P_c\defeq\bigl[\bigl(\varsigma(c),\psi_K\bigr)\bigr]$. Set $\mathcal{G}_c\defeq\Gal(H_c/K)$ and recall the element $\gamma_\p$ introduced in \S \ref{definite-subsubsec}. Set $\mathbf{v}_0^*\defeq D_K^{\frac{k-2}{2}}\cdot\mathbf{v}_0$. The theta element associated with $\phi$ is 
\[ \theta_{c,M,D}(\phi)\defeq\sum_{\sigma\in\mathcal{G}_c}\big\langle\rho_k^{-1}(\gamma_\p)\mathbf{v}_0^*,\Phi(P_c^\sigma)\big\rangle_k\cdot\sigma\in R[\mathcal{G}_c]. \] 
When $A$ is a $K$-algebra, we may compare these elements with the theta elements introduced in \cite[\S4.3]{ChHs1} (\emph{cf.} also \cite[Definition 2.3]{CL}). Let $\Psi$ be a modular form as in \eqref{CVMF} and let $\Phi$ be the modular form as in Definition \ref{defMF} corresponding to $\Psi$. The theta elements in \cite{ChHs1} are defined as 
\begin{equation} \label{CHtheta}
\sum_{\sigma\in\mathcal{G}_{c}}\big\langle\mathbf{v}_0^*,\Psi(P_c^\sigma)\big\rangle_k\cdot\sigma\in R[\mathcal G_c].
\end{equation}
Now recall the $p$-adic avatar $\widehat\Psi(g)=\rho_k^{-1}(\gamma_\p)\rho_{k,p}^{-1}(g_p)\Psi(g)$ from \cite[\S4.1]{ChHs1}; using this formula and the equality $\rho_{k,p}(g)=\rho_k(\gamma_\p g_p\gamma_\p^{-1})$, we get 
 \[ \Psi(g)=\rho_k(\gamma_\p) \rho_k(g_p)\hat\Psi(g)=\rho_k(\gamma_\p) \Phi(g). \]
Replacing in \eqref{CHtheta}, we finally get
\[ \sum_{\sigma\in\mathcal{G}_{c}}\big\langle\mathbf{v}_0^*,\Psi(P_c^\sigma)\big\rangle_k\cdot\sigma=\sum_{\sigma\in\mathcal{G}_{c}}\big\langle\rho_k^{-1}(\gamma_\p) (\mathbf{v}_0^*),\Phi\bigl(x_n(a)\bigr)\big\rangle_k\cdot\sigma=\theta_{c,M,D}(\phi). \]
The interpolation properties of the elements $\theta_{c,M,D}(\phi)$ can be found in \cite[Theorem 4.6]{ChHs1} (\emph{cf.} also \cite[Theorem 2.4]{CL}); a more general interpolation formula is due to Hung (\cite{Hung}). More precisely, let $f\in S_k(\Gamma_0(MD))$ be a newform and let $\phi_f$ be the associated quaternionic eigenform (see the discussion in \S\ref{secJL}). Let $L_K(f,\chi,s)$ be the $L$-function of $f$ twisted by $\chi$, and let $\Omega_{f,D}$ be the \emph{Gross period} of $f$ (see \cite[\S1]{ChHs1}; see also \cite[\S5.2]{KL1}). 
%We also put $\mathcal{L}_c(f)=\theta_{c,M,D}(\phi)\theta_{c,M,D}^\dagger(\phi)$, where $\lambda\rightarrow\lambda^*$ is the canonical involution of the Iwasawa algebra $R[\![G_\infty]\!]$. 
%Then, for a finite order character $\chi$ of $\Gal(H_c/H)$, we have 
%\[\chi(\mathcal{L}_p(f))=\mathrm{Const}\times  \frac{L_K(f,\chi,k/2)}{\Omega_{f,N^-}}\]
%where the constant $\mathrm{Const}=C_nE_p^2$ is a non-zero algebraic number, and 
%\[E_p=\begin{cases}
%\text{$E_p=\left(1-\frac{\chi(\p)p^\frac{k-2}{2}}{\alpha_p}\right)\left(1-\frac{\chi(\bar\p)p^\frac{k-2}{2}}{\alpha_p}\right)$ if $p=\p\bar{\p}$ is split in $K$;}\\
%\text{$(1-\alpha_p^{-2})$ if $p$ is inert in $K$;}\end{cases}\]
%\[C_n=(\sharp(\mathcal{O}_K^\times)/2)^2\cdot\Gamma(k/2)^2\cdot \sqrt{D_K}D_K^{k-2}
%.\]
%In particular, note that for $n=1$ both $C_p$ and $E_p$ are $p$-adic units, so $\mathrm{Const}$ is a $p$-adic unit. 
For all (finite order) characters $\chi$ of $\mathcal{G}_c$ we have 
\[ \chi\bigl(\theta_{c,M,D}(\phi_f)^2\bigr)=\mathrm{Const}\times  \frac{L_K(f,\chi,k/2)}{\Omega_{f,D}}, \]
where $\mathrm{Const}$ is a $p$-adic unit (by which we mean that there is a finite field extension $\mathcal K$ of $\Q_p$ such that $\mathrm{Const}\in\cO_{\mathcal K}^\times$, where $\cO_{\mathcal K}$ is the valuation ring of $\mathcal K$). 

\begin{remark}
In the interpolation formulas above it is implicit that the quotient between the special value of the complex $L$-function and the Gross period is an algebraic number, so it may be compared via the given embeddings $\iota_\infty:\bar\Q\hookrightarrow\C$ and $\iota_p:\bar\Q\hookrightarrow\bar\Q_p$ from \S \ref{notation-subsec}  to the value of the theta series at $\chi$. 
\end{remark}

\subsection{Reciprocity laws} \label{reciprocity-subsec}

We review the explicit reciprocity laws of \cite{wang}, which generalise those of \cite{BD-IMC} and \cite{ChHs2} and lie at the heart of this paper. Let us choose $\phi\in \mathcal{S}_k(M,D;R)$ with $R\defeq\cO_\wp/\wp\cO_\wp=\cO/\wp$, where $\cO$ is the ring of integers of a number field, $\wp$ is a prime ideal of $\cO$ of residue characteristic $p$ and $\cO_\wp$ is the completion of $\cO$ at $\wp$. We suppose throughout that Assumption \ref{ass} is satisfied and that $\phi$ is normalised. Let $\mathcal{P}=\mathcal{P}(\phi,\wp,K)$ be as in \S \ref{admissible-subsubsec} and let $\phi_{S}\in\mathcal{S}_k(M,DS;R)$ be the form from Theorem \ref{levelraising}. 

Let $c\geq1$ be an integer prime to $MD$. If $S\in\mathcal{P}^\mathrm{def}$, then we set 
\[ \lambda_{\phi,c}(S)\defeq\theta_{c,M,DS}(\phi_S). \]
%Since $p$ does not divide the class number $\sharp(\Gal(H_1/K))$ of $K$, we see that $\mathcal{G}_n\simeq G_n\times \Delta$ with $\Delta=\Gal(H_1/K)$; 
%and we complete the above definition in the case $m=0$ setting 
%\[\lambda_{\phi,0}(S)= \sum_{\sigma\in\Gal(H_p/H)}\langle\rho_k(Z_p)\mathbf{v}_0^*,\Phi((P_{p}^\dagger)^\sigma)\rangle_k\sigma.\]
%\[\lambda_{\phi}(S)= \sum_{\sigma\in\Gal(K_1/K)}\langle\rho_k(Z_p)\mathbf{v}_0^*,\Phi((P_{p}^\dagger)^\sigma)\rangle_k\sigma.\]
%Since $p$ does not divide the class number of $K$, for the trivial character $\chi_\mathrm{triv}$ we see that 
If $S\in \mathcal{P}^\mathrm{indef}$, then we set 
\[ \kappa_{\phi,c}{(S)}\defeq\sum_{\sigma\in\mathcal{G}_c}\bigl(y _{c,M,DS}(\phi_S)\bigr)^\sigma\in H^1\bigl(H_c,T_{\phi_{S},\wp_S}^\dagger\bigr)\simeq H^1\bigl(H_c,T_{\phi,\wp}^\dagger\bigr), \]
where the isomorphism on the right is a consequence of the equivalence $T_{\phi_{S},\wp_S}\simeq T_{\phi.\wp}$. 

For $\ell\in\mathcal P$, recall the map $\partial_\ell$, whose target $H^1_\mathrm{sing}\bigl(K_\ell,T_{\phi,\wp}^\dagger\bigr)$ is a free $R$-module of rank $1$. 

\begin{theorem}[First reciprocity law] \label{ERL1}
Let $S\in\mathcal{P}^\mathrm{def}$ and $\ell\nmid S$ be an admissible prime for $(\phi,\wp,K)$. Then 
\[ \partial_\ell\bigl(\kappa_{\phi,c}{(S\ell)}\bigr)\equiv u\lambda_{\phi,c}{(S)}\pmod{\wp} \] 
for some $u\in R^\times$. 
\end{theorem}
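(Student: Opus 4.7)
Since $S\in\mathcal{P}^\mathrm{def}$ and $\ell\nmid S$ is admissible, we have $S\ell\in\mathcal{P}^\mathrm{indef}$, so that $\cB_{DS\ell}$ is indefinite (but ramifies at $\ell$) while $\cB_{DS}$ is definite. The plan is to follow the higher-weight version of the Bertolini--Darmon explicit reciprocity: compute the residue at $\ell$ of the Abel--Jacobi class of the Heegner cycle on the Kuga--Sato variety over $X_0(M,DS\ell)$ by reducing modulo $\ell$, and then identify the outcome with $\theta_{c,M,DS}(\phi_S)$ via the Cerednik--Drinfeld/Jacquet--Langlands dictionary at the bad prime $\ell$.

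First I would exploit admissibility at $\ell$. Since $p\nmid\ell^2-1$ and $\phi(T_\ell)\equiv \epsilon_\ell\bigl(\ell^{k/2}+\ell^{(k-2)/2}\bigr)\pmod{\wp}$, the representation $\bar T^\dagger_{\phi,\wp}$ is unramified at $\ell$ and $\Frob_\ell$ acts with the two distinct eigenvalues $\epsilon_\ell$ and $\epsilon_\ell\ell^{-1}$; the canonical splitting \eqref{splitting-eq} then makes $H^1_\sing\bigl(K_\ell,\bar T^\dagger_{\phi,\wp}\bigr)$ free of rank one over $R$, and \eqref{ordS-2} identifies it with $H^1(K_\ell,\bar T^{\dagger,+})$, where $\bar T^{\dagger,+}$ is the $\epsilon_\ell$-eigenline. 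The level-raised form $\phi_{S\ell}$ of Theorem \ref{levelraising} satisfies $T_{\phi_{S\ell},\wp_{S\ell}}\simeq T_{\phi,\wp}$ and $\bar\phi_{S\ell}(U_\ell)=\epsilon_\ell\ell^{(k-2)/2}$; these compatibilities match the ordinary line at $\ell$ defined through $\phi_{S\ell}$ with the above $\epsilon_\ell$-eigenline, so $\partial_\ell$ takes values in a free $R$-module of rank one.

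Next I would invoke the Cerednik--Drinfeld uniformization of $X_0(M,DS\ell)$ at $\ell$: because $\cB_{DS\ell}$ ramifies at $\ell$, the special fiber over $\bar{\mathbb F}_\ell$ is a union of rational components crossing at points whose set is, via Jacquet--Langlands, in canonical bijection with the double-coset space parametrizing definite quaternionic forms for $\cB_{DS}$; at the inert prime $\ell$, the Heegner point of conductor $c$ reduces to the supersingular point corresponding under this bijection to the Gross point $P_c$ of conductor $c$ on $\cB_{DS}$. Passing to the Kuga--Sato variety $W_{k,d}(M,DS\ell)$ and applying the projector $\epsilon_d\epsilon_W$, the weight-monodromy filtration at $\ell$ on the $\phi_{S\ell}$-isotypic component of the $\wp$-adic cohomology yields a specialization homomorphism whose composition with $\AJ_{\phi_{S\ell},\wp_{S\ell},H_c}$ sends $\Delta_{c,M,DS\ell}$ to the pairing $\bigl\langle\rho_k^{-1}(\gamma_\p)\mathbf{v}_0^*,\Phi_S(P_c)\bigr\rangle_k$ (times a fixed $\wp$-adic generator of $H^1_\sing$), where $\Phi_S$ denotes the quaternionic modular form attached to $\phi_S$. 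Translating each Galois translate and summing as in the definition of $\kappa_{\phi,c}(S\ell)$ reproduces exactly $\theta_{c,M,DS}(\phi_S)=\lambda_{\phi,c}(S)$ modulo $\wp$, up to a unit $u\in R^\times$ coming from the normalisation of the projector $\epsilon_W$ and of the Abel--Jacobi map.

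The main obstacle is the weight-$k$ specialization step. In weight $2$ this amounts to the classical Bertolini--Darmon computation of the divisor of a Heegner point on the dual graph of the Cerednik--Drinfeld reduction; for $k>2$ one must analyse the $\epsilon_W$-projected $\ell$-adic cohomology of $W_{k,d}(M,DS\ell)$, its monodromy filtration at $\ell$, and the pairing $\langle\,,\,\rangle_k$ on $L_r(\cO_\wp)$ coming from $\rho_k$, so as to match them carefully with the polynomial vector $\rho_k^{-1}(\gamma_\p)\mathbf{v}_0^*$ that enters the definition of the theta element. This is precisely the technical heart of \cite{wang}, whose first reciprocity law we would invoke in order to complete the proof.
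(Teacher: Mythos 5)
Your proposal is correct and takes essentially the same route as the paper. The paper's ``proof'' of Theorem \ref{ERL1} is simply a citation to \cite[Theorem 3.6]{wang} (with a secondary pointer to \cite[\S 8.2]{Chida}), and your argument, after laying out a correct and informative sketch of the Cerednik--Drinfeld/Jacquet--Langlands mechanism behind the result (including the correct eigenvalue computation $\epsilon_\ell$, $\epsilon_\ell\ell^{-1}$ for $\Frob_\ell$ on $\overline{T}^\dagger_{\phi,\wp}$ coming from admissibility, and the identification of the supersingular reduction of the Heegner point with the corresponding Gross point), ends by invoking that very same theorem of Wang for the technically substantive weight-$k$ specialization step. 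So you have not produced an independent proof, but rather a well-motivated unpacking of the argument followed by the same external appeal the paper makes.
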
 

\begin{proof} This is proved in \cite[Theorem 3.6]{wang}; see also \cite[\S 8.2]{Chida}. {\color{red}}\end{proof}

\begin{theorem}[Second reciprocity law] \label{ERL2} 
Let $S\in\mathcal{P}^\mathrm{indef}$ and let $\ell\nmid S$ be an admissible prime for $(\phi,\wp,K)$. Then
\[ v_\ell\bigl(\kappa_{\phi,c}{(S)}\bigr)\equiv u\lambda_{\phi,c}{(S\ell)}\pmod{\wp} \] 
for some $u\in R^\times$. 
\end{theorem}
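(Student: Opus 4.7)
The plan is to prove this second reciprocity law by geometrically computing the finite (unramified) part of the local class $\loc_\ell(\kappa_{\phi,c}(S))$ via the reduction of Heegner cycles at the admissible prime $\ell$, and then identifying that reduction with the theta element $\lambda_{\phi,c}(S\ell)$. This is the higher-weight analogue of the Bertolini--Darmon strategy and appears in \cite{wang}; I outline below how a proof proceeds.

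First, I would set up the local analysis at $\ell$. Since $\ell$ is admissible for $(\phi,\wp,K)$, it is inert in $K$ and the residual representation $\overline{T}^\dagger_{\phi,\wp}$ is unramified at $\ell$ with Frobenius eigenvalues $\pm\ell^{(k-2)/2},\pm\ell^{k/2}$ (up to sign $\epsilon_\ell$). Hence $\cM = T^\dagger_{\phi,\wp}/\wp$ splits over $G_{K_\ell}$ as $\cM^+\oplus\cM^-$ with $\cM^- = (\Frob_\ell - \epsilon_\ell\ell)\cM$, giving the canonical decomposition in \eqref{splitting-eq}, and the map $v_\ell$ computes the unramified part. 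The key is then to describe this unramified part cycle-theoretically: the Heegner cycle $\Delta_{c,M,DS}$ on $W_{k,d}(M,DS)$ has good reduction at $\ell$, so its image under the $p$-adic Abel--Jacobi map $\AJ_{\phi_S,\wp_S,H_c}$ localised at a prime above $\ell$ lands in the finite part, and can be computed in terms of the special fibre at $\ell$ via the crystalline--étale comparison for Kuga--Sato varieties (in the form used in, e.g., \cite{Nek-LR}, \cite{Longo-CMH}).

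Next, I would input the key geometric fact: at the inert prime $\ell$, the supersingular locus of the mod-$\ell$ fibre of (an integral model of) the Shimura curve $X_0(M,DS)$ admits a uniformization by the double coset space for the \emph{definite} quaternion algebra of discriminant $DS\ell$ (Cerednik--Drinfeld for $DS>1$; the Deuring--Ribet correspondence for $DS=1$). Under this identification, the Heegner points $P_c^{\sigma_\mathfrak{a}}$, being supersingular at $\ell$, correspond to Gross points $P_c^{\sigma_\mathfrak{a}}$ for the definite quaternion algebra of discriminant $DS\ell$ on which $\phi_{S\ell}$ lives. Passing through the projectors $\epsilon_W\epsilon_d$, the relevant piece of the cohomology of the Kuga--Sato variety $W_{k,d}(M,DS)$ fibered over this supersingular locus, with the symmetric power local system $\Sym^{k-2}$, is Hecke-equivariantly identified with the $\phi_{S\ell}$-isotypic component of the space of definite quaternionic modular forms of weight $k$ and level $M$ on the algebra of discriminant $DS\ell$ (level raising at $\ell$ is exactly Theorem \ref{levelraising} applied to $\phi_S$ with the extra admissible prime $\ell$). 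Under this identification, the class of a Heegner cycle attached to $\mathfrak{a}$ maps, up to a $p$-adic unit, to the value $\langle \rho_k^{-1}(\gamma_\p)\mathbf{v}_0^*,\Phi_{S\ell}(P_c^{\sigma_\mathfrak{a}})\rangle_k$, because the projector $\epsilon_W$ extracts the component corresponding to pairing the fibre-cohomology with $\mathbf{v}_0^*$ after transporting by $\gamma_\p$.

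Summing over $\sigma_\mathfrak{a}\in\mathcal{G}_c$ then assembles $v_\ell(\kappa_{\phi,c}(S))$ into the element
\[
u\cdot\sum_{\sigma\in\mathcal{G}_c}\big\langle\rho_k^{-1}(\gamma_\p)\mathbf{v}_0^*,\Phi_{S\ell}(P_c^\sigma)\big\rangle_k\cdot\sigma = u\cdot\theta_{c,M,DS\ell}(\phi_{S\ell}) = u\cdot\lambda_{\phi,c}(S\ell),
\]
with $u\in R^\times$ the product of the various normalisation constants. The admissibility of $\ell$ (so $\ell\not\equiv\pm 1\pmod p$ and $p\nmid\ell^2-1$) together with the conditions $p>k+1$ and $p\nmid c\phi(d)(k-2)!$ ensures that all these constants are indeed $p$-adic units, as does the ordinarity of $\phi$. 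The main obstacle — and what occupies the bulk of Wang's argument — is to make the higher-weight geometric identification rigorous: one must track the projectors $\epsilon_W, \epsilon_d$ through the Cerednik--Drinfeld uniformization of the supersingular fibre, match the crystalline realisation of $\AJ_{\phi_S,\wp_S,\bullet}$ with the pairing defining theta elements, and verify that the twist by $\gamma_\p$ in the definition of the $p$-adic avatar is exactly what is produced by the comparison isomorphism. Once these compatibilities are in place, the congruence follows.
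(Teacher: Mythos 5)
The paper does not give a proof of this theorem: it simply cites \cite[Theorem 3.4]{wang}, so there is no internal argument to compare against. Your proposal is a reasonably faithful reconstruction of the strategy that Wang (following Bertolini--Darmon, Chida--Hsieh, \cite{Longo-CMH}, \cite{Nek-LR}) uses: reduce the Heegner cycle at the good prime $\ell$, compute the unramified part of the $\wp$-adic Abel--Jacobi image via the special fibre, identify supersingular Heegner points with Gross points for the definite quaternion algebra of discriminant $DS\ell$, and match the resulting pairing with the coefficients of the theta element $\theta_{c,M,DS\ell}(\phi_{S\ell})$.

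There is, however, one misattribution that you should correct, because as stated it is a genuine confusion between the two reciprocity laws. You invoke ``Cerednik--Drinfeld for $DS>1$'' to describe the supersingular locus of $X_0(M,DS)$ at $\ell$. The Cerednik--Drinfeld theorem is the $\ell$-adic uniformization of a Shimura curve at a prime \emph{dividing} its discriminant, i.e.\ at a prime of bad (totally degenerate) reduction; that picture is the geometric input for the \emph{first} reciprocity law (Theorem \ref{ERL1}), where one studies $X_0(M,DS\ell)$ at $\ell\mid DS\ell$ and the singular cohomology of the Mumford-curve special fibre. In the second reciprocity law the prime $\ell$ does \emph{not} divide $MDS$, the curve $X_0(M,DS)$ has good reduction at $\ell$, and what you need is the description of its supersingular points (abelian surfaces with quaternionic multiplication in characteristic $\ell$) in terms of the definite quaternion algebra of discriminant $DS\ell$ --- Deuring/Eichler for $D=1$, Ribet's bimodule theory and its refinements for $D>1$. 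The mathematical content you then describe (the identification of supersingular Heegner points with Gross points, level raising at $\ell$ producing $\phi_{S\ell}$, the projectors $\epsilon_W,\epsilon_d$ and the $\gamma_\p$-twist in the definition of the $p$-adic avatar matching the crystalline--\'etale comparison) is the right picture, so this is a labelling error rather than a structural one, but ``Cerednik--Drinfeld'' should be removed here and reserved for Theorem \ref{ERL1}.

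Two smaller points. First, you should make explicit the identification under which the congruence $v_\ell(\kappa_{\phi,c}(S))\equiv u\,\lambda_{\phi,c}(S\ell)$ is read: the left side lives in $H^1_\fin\bigl((H_c\otimes_\Q\Q_\ell)^{\phantom{1}},\cM\bigr)\simeq R[\mathcal{G}_c]$, and it is this free rank-one $R[\mathcal{G}_c]$-module that is being compared with the theta element. Second, the claim that all normalisation constants are $p$-adic units because ``$p\nmid c\phi(d)(k-2)!$ and $p>k+1$'' is where the real work of \cite{wang} lies, and your sketch rightly flags this but does not discharge it; in the present paper this is precisely why the statement is taken from \cite{wang} rather than proved.
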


\begin{proof} This is proved in \cite[Theorem 3.4]{wang}.\end{proof}

For $\ell\in\mathcal{P}$, recall the map $v_\ell$ from \eqref{v-ell-eq}, whose target $H^1_\mathrm{fin}\bigl(K_\ell,T_{\phi,\wp}^\dagger\bigr)$ is a free $R$-module of rank $1$. 

\begin{corollary} \label{coro2.5} 
Let $S\in\mathcal{P}^\mathrm{indef}$ and let $\ell_1$, $\ell_2$ be distinct admissible primes not dividing $S$. Then 
\[ v_{\ell_1}\bigl(\kappa_{\phi,c}(S)\bigr)\neq 0\Longleftrightarrow\partial_{\ell_2}\bigl(\kappa_{\phi,c}(S\ell_1\ell_2)\bigr)\neq 0. \] 
\end{corollary}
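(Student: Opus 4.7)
The plan is to observe that both conditions in the biconditional are, via the two reciprocity laws, congruent modulo $\wp$ (up to a unit) to the same theta element $\lambda_{\phi,c}(S\ell_1)$.

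First I would check the parities of the relevant multi-indices. Since $S\in\mathcal{P}^\mathrm{indef}$, by definition $\mu(DS)=1$, so $DS$ has an even number of prime divisors. Adding the admissible prime $\ell_1\nmid S$ gives $\mu(DS\ell_1)=-1$, so $S\ell_1\in\mathcal{P}^\mathrm{def}$; adding further the admissible prime $\ell_2\nmid S\ell_1$ gives $\mu(DS\ell_1\ell_2)=1$, so $S\ell_1\ell_2\in\mathcal{P}^\mathrm{indef}$. Hence the two classes $\kappa_{\phi,c}(S)$ and $\kappa_{\phi,c}(S\ell_1\ell_2)$ both come from Heegner cycles on indefinite Shimura curves (as required for $\kappa_{\phi,c}$ to be defined), while $\lambda_{\phi,c}(S\ell_1)$ is the theta element attached to the definite-case eigenform $\phi_{S\ell_1}$.

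Next I would apply the second reciprocity law (Theorem \ref{ERL2}) to $S\in\mathcal{P}^\mathrm{indef}$ and to the admissible prime $\ell_1\nmid S$ to obtain
\[ v_{\ell_1}\bigl(\kappa_{\phi,c}(S)\bigr)\equiv u\,\lambda_{\phi,c}(S\ell_1)\pmod{\wp} \]
with $u\in R^\times$. Then I would apply the first reciprocity law (Theorem \ref{ERL1}) to $S\ell_1\in\mathcal{P}^\mathrm{def}$ and to the admissible prime $\ell_2\nmid S\ell_1$ to obtain
\[ \partial_{\ell_2}\bigl(\kappa_{\phi,c}(S\ell_1\ell_2)\bigr)\equiv u'\,\lambda_{\phi,c}(S\ell_1)\pmod{\wp} \]
with $u'\in R^\times$.

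Since both $u$ and $u'$ are units in $R=\cO/\wp$, and both $H^1_\mathrm{fin}(K_{\ell_1},T^\dagger_{\phi,\wp})$ and $H^1_\mathrm{sing}(K_{\ell_2},T^\dagger_{\phi,\wp})$ are free $R$-modules of rank one (as recalled just before the statements of the two reciprocity laws), the non-vanishing of $v_{\ell_1}(\kappa_{\phi,c}(S))$ is equivalent to $\lambda_{\phi,c}(S\ell_1)\not\equiv 0\pmod{\wp}$, which in turn is equivalent to the non-vanishing of $\partial_{\ell_2}(\kappa_{\phi,c}(S\ell_1\ell_2))$. There is no real obstacle here: the corollary is essentially a bookkeeping consequence of Theorems \ref{ERL1} and \ref{ERL2}, with the only mildly delicate point being the verification that the parities make the appeal to each reciprocity law legitimate.
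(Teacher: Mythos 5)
Your proof is correct and follows exactly the route the paper intends (the paper's own proof just says ``combine Theorems~\ref{ERL1} and~\ref{ERL2}''). The parity bookkeeping, the chain $v_{\ell_1}(\kappa_{\phi,c}(S))\equiv u\,\lambda_{\phi,c}(S\ell_1)\equiv (u/u')\,\partial_{\ell_2}(\kappa_{\phi,c}(S\ell_1\ell_2))$ up to units, and the appeal to freeness of rank one for the target modules are precisely the missing details.
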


\begin{proof} This follows immediately upon combining Theorems \ref{ERL1} and \ref{ERL2}. \end{proof}

\begin{remark} 
It would be interesting to explore arithmetic applications of (more general versions of) the explicit reciprocity laws for all integers $c\geq1$, with possible applications to the Tamagawa number conjecture for the motive of $f$; results in this direction can be found in \cite{Chida} and \cite{LV-JNT}. 
\end{remark}

\section{Kolyvagin's conjecture} \label{kolyvagin-sec}

In this section, we introduce our Kolyvagin systems of derived Galois cohomology classes attached to a higher weight newform $f$ and then prove Kolyvagin's conjecture for $f$.

\subsection{The newform $f$ and its Galois representations}

Let $f\in S_k(\Gamma_0(N))$ be a newform of weight $k\geq4$ and level $N$, whose $q$-expansion will be denoted by $f(q)=\sum_{n\geq 1}a_n(f)q^n$. Write $F\defeq\Q\bigl(a_n(f)\mid n\geq1\bigr)$ for the Hecke field of $f$, which is a totally real number field, and let $\cO_F$ be its ring of integers. Furthermore, let $\p$ be a prime ideal of $\cO_F$ of  residual characteristic $p\nmid N$. Let $V_{f,\p}^\dagger$ be the self-dual twist of the Galois representation attached to $f$ and $\p$ by Deligne, where $F_\p$ is the completion of $F$ at $\p$. We fix a $G_\Q$-stable lattice $T_{f,\p}^\dagger\subset V_{f,\p}^\dagger$ and set $A_{f,\p}^\dagger\defeq V_{f,\p}^\dagger/T_{f,\p}^\dagger$; if the residual representation $A_{f,\p}^\dagger[\p]$ thus obtained is irreducible, then its isomorphism class is independent of the choice of $T_{f,\p}^\dagger$, since all Galois-stable lattices inside $V_{f,\p}^\dagger$ are homothetic. Fix an imaginary quadratic field $K$ of discriminant prime to $Np$ and consider a factorization $N=N^+N^-$ such that a prime number $\ell$ divides $N^+$ (respectively, $N^-$) if and only if $\ell$ splits (respectively, is inert) in $K$. We assume throughout in this section that $N$ is square free and $N^-$ is the product of an even number of primes. 

Let $\cB$ be the (indefinite) quaternion algebra of discriminant $N^-$ and let $\mathcal{R}$ be an Eichler order of level $N^+$ of $\cB$. From now on, with notation as in \S \ref{secJL}, let us set $\phi\defeq\JL(f)$ and take $\wp=\p$, so that $\phi\in\mathcal{S}_k(N^+,N^-;\cO_\p)$. Thus, there are canonical Galois-equivariant isomorphisms 
\begin{equation} \label{galois-isom-eq}
T_{f,\p}^\dagger\simeq T_{\phi,\p}^\dagger,\quad T_{f,\p}^\dagger/\p T_{f,\p}^\dagger\simeq \overline{T}_{\phi,\p}^\dagger,\quad A_{f,\p}^\dagger\simeq A_{\phi,\p}^\dagger.
\end{equation}
Suppose throughout that Assumption \ref{ass} for $\phi=\phi_f$ is satisfied.  

% Let us fix $\phi\in \mathcal{S}_k(M,D;R)$ with $R=\mathcal{O}_\wp$ or $R=\mathcal{O}/\wp^n$ for the ring of algebraic integers of a number field, and $\wp$ a prime ideal of $\mathcal{O}$ of residual characteristic $p$. We assume that $\phi$ is normalised. Let $K$ be an imaginary quadratic field as before (all primes dividing $M$ are split in $K$ and all primes dividing $D$ are inert in $K$; further, $p$ does not divide the discriminant of $K$). We assume throughout that Assumption \ref{ass} is satisfied. 

\subsection{Kolyvagin systems}\label{sec-kolsys}

Our goal now is to introduce the Kolyvagin systems built out of Heegner cycles in terms of which we will formulate a higher weight analogue of Kolyvagin's conjecture.

\subsubsection{Kolyvagin primes and Kolyvagin integers}

As above, let $S\in\mathcal{P}=\mathcal{P}(\phi,\p,K)$. 

\begin{definition}\label{defKol}
A \emph{Kolyvagin prime for $(f,S, \p, K)$} is a prime number $\ell$ such that 
\begin{enumerate}
\item $\ell\nmid NSp$;
\item $\ell$ is inert in $K$;
\item $M(\ell)\defeq\min\bigl\{\ord_\p(\ell+1),\ord_\p\bigl(a_\ell(f)\bigr)\bigr\}>0$.  
\end{enumerate} \end{definition}
Let $\mathcal P_{\Kol}^{(S)}(f,\p,K)$ be the set of Kolyvagin primes introduced in Definition \ref{defKol}. Let us write 
\begin{equation}\label{IKol}
\mathcal I_{\Kol}^{(S)}(f,\p,K)\defeq\Bigl\{\text{square-free products of primes in $\mathcal P_{\Kol}^{(S)}(f,\p,K)$}\Bigr\} \end{equation} 
for the set of \emph{Kolyvagin integers} for $(f,S,\p,K)$. 

\begin{remark} \label{kol-rem}
It follows trivially from the definitions that if $S,S'\in\mathcal P$ and $S\,|\,S'$, then there is an inclusion $\mathcal P_{\Kol}^{(S')}(f,\p,K)\subset\mathcal P_{\Kol}^{(S)}(f,\p,K)$, whence $\mathcal I_{\Kol}^{(S')}(f,\p,K)\subset\mathcal I_{\Kol}^{(S)}(f,\p,K)$.
\end{remark}

The data $\p$ and $K$ being regarded as fixed, from now on we shall often write $\mathcal P_{\Kol}^{(S)}(f)$ and $\mathcal I_{\Kol}^{(S)}(f)$ for the two sets above. Notice that $1\in\mathcal I_{\Kol}^{(S)}(f)$. For every $n\in\mathcal I_{\Kol}^{(S)}(f)$, the \emph{Kolyvagin index} of $n$ is
\begin{equation} \label{M(n)-eq}
M(n)\defeq \begin{cases} \min\bigl\{M(\ell)\mid\ell\,|\,n\bigr\}&\text{if $n\geq2$},\\[3mm]\infty&\text{if $n=1$}. \end{cases}
\end{equation}
%Furthermore, let $\mathcal I_{\Kol}^{(S)}(f)_r$ be the subset of $\mathcal I_{\Kol}^{(S)}(f)$ consisting of all the Kolyvagin integers having exactly $r$ prime factors. 
A Kolyvagin prime for $(f,1,\p,K)$ will be simply called a \emph{Kolyvagin prime for $(f,K)$}; moreover, we set $\mathcal P_{\Kol}(f)\defeq\mathcal P_{\Kol}^{(1)}(f)$ and $\mathcal I_{\Kol}(f)\defeq\mathcal I_{\Kol}^{(1)}(f)$. 
%
%======================
%
%{\color{red} MAYBE THIS NOTATION (both the preceding one and the successive one) WILL BE CHANGED AT SOME POINT? WHERE DO WE USE THE PARITY?}
%
%======================
%
%For an integer $n\geq1$, denote by $\nu(n)$ the number of distinct prime factors of $n$. Set 
%\[ \mathcal I_{\Kol}^\pm(f)\defeq\bigl\{n\in\mathcal I_{\Kol}(f)\mid(-1)^{\nu(n)}=\pm1\bigr\}. \]
%In other words, a Kolyvagin integer $n$ belongs to $\mathcal I_{\Kol}^+(f)$ (respectively, $\mathcal I_{\Kol}^-(f)$) if and only if $n$ has an even (respectively, odd) number of prime factors. 

\subsubsection{Kolyvagin systems of Heegner cycles}

Following a recipe originally due (in the context of modular abelian varieties) to Kolyvagin (see, \emph{e.g.}, \cite[\S 4]{Gross}, \cite{Kol-Euler}, \cite[\S 3.7]{zhang-selmer}), we attach to $f$ a systematic supply of Galois cohomology classes, which we call \emph{Kolyvagin classes}, that are defined in terms of the Heegner cycles of \S \ref{cycles-subsec}, are indexed over Kolyvagin integers and take values in quotients of $T_{f,\p}^\dagger$. 

To begin with, for all $n\in\mathcal I^{(S)}_{\Kol}(f)$ set 
\[ G_n\defeq \Gal(H_n/H_K),\quad\mathcal G_n\defeq \Gal(H_n/K)\simeq\Pic(\cO_n), \]
so that, by class field theory, $G_n=\prod_{\ell|n}G_\ell$ with $G_\ell$ cyclic of order $\ell+1$, where $\ell$ varies over the prime divisors of $n$. For all $\ell\in\mathcal P^{(S)}_{\Kol}(f)$, we choose a generator $\sigma_\ell$ of $G_\ell$; the Kolyvagin derivative operators are
\[ D_\ell\defeq \sum_{i=1}^\ell i\sigma_\ell^i\in\Z[G_\ell],\quad D_n\defeq \prod_{\ell|n}D_\ell\in\Z[G_n]. \]
In particular, $D_1$ is the identity. 

For a number field $L$, we also define 
%\begin{itemize}
%\item $V_\p=V_{f,\p}^\dagger$; $T_\p=T_{f,\p}^\dagger$; $A_\p=A_{f,\p}^\dagger$; 
%\item 
\[ \Lambda_{\phi,\p}(L)\defeq\im(\AJ_{\phi,\p,L})\subset H^1\bigl(L,T_{f,\p}^\dagger\bigr), \] 
where $\AJ_{\phi,\p,L}$ is the map in \eqref{AJ1} and the inclusion on the right is a consequence of the first isomorphism in \eqref{galois-isom-eq}. 
%\end{itemize}
As in \cite{LV} (see, in particular, \cite[Corollary 2.7, (3)]{LV} and \cite[Proposition 2.8]{LV}), one can check that for each integer $m$ with $1\leq m\leq n$ there is a natural Galois-equivariant injection
%\begin{equation} \label{iota_L-eq}
\[ \iota_{\phi,L,m}:\Lambda_{\phi,\p}(L)\big/\p^m\Lambda_{\phi,\p}(L)\longmono H^1\bigl(L,T_{f,\p}^\dagger/\p^m T_{f,\p}^\dagger\bigr), \]
%\end{equation}
which should the thought of as a higher weight counterpart of the usual Kummer map in the Galois cohomology of abelian varieties over number fields. Analogously, with self-explaining notation, one has a Galois-equivariant injection
\[ \iota_{\phi_S,L,m}:\Lambda_{\phi_S,\p_S}(L)\big/\p_S^m\Lambda_{\phi_S,\p_S}(L)\longmono H^1\bigl(L,T_{\phi_S,\p_S}^\dagger/\p_S^m T_{\phi_S,\p_S}^\dagger\bigr)\simeq H^1\bigl(L,T_{f,\p}^\dagger/\p^m T_{f,\p}^\dagger\bigr), \]
where $\phi_S\in\mathcal S_k(M,N^-S;\cO_{S,\wp_S})$ comes from part (2) of Theorem \ref{levelraising} and the isomorphism on the right follows from the Galois-equivariant isomorphism $T_{\phi_S,\p_S}^\dagger\simeq T_{f,\p}^\dagger$. Of course, there is an equality $\iota_{\phi_1,L,m}=\iota_{\phi,L,m}$.

Recall the general notation introduced in \S \ref{HC}; with $M=N^+$ and $D=N^-$, let us define 
\[ y_{n}(S)\defeq y_{n,N^+,N^-S} (\phi_S)\in\Lambda_{\phi_S,\p_S}(H_n). \] 
Fix $n\in\mathcal I^{(S)}_{\Kol}(f)$, let $\mathcal G$ be a system of representatives for $\mathcal G_n/G_n$ and set
\[ z_{n}(S)\defeq \sum_{\sigma\in\mathcal G}\sigma\Bigl(D_n\bigl(y_{n}(S)\bigr)\!\Bigr)\in\Lambda_{\phi_S,\p_S}(H_n). \]
Then for $S=1$, $z_n=z_n(1)\in \Lambda_{f,\p}(H_n)$. The field extension $H_n/\Q$, which is generalised dihedral, is solvable, so $H^0\bigl(H_n,T_{f,\p}^\dagger/\p^m T_{f,\p}^\dagger\bigr)=0$ by \cite[Lemma 3.10, (2)]{LV}. It follows that restriction induces an isomorphism
\begin{equation} \label{res-iso-eq} 
\res_{H_n/K}:H^1\bigl(K,T_{f,\p}^\dagger/\p^m T_{f,\p}^\dagger\bigr)\overset\simeq\longrightarrow H^1\bigl(H_n,T_{f,\p}^\dagger/\p^m T_{f,\p}^\dagger\bigr)^{\mathcal G_n}. 
\end{equation}
Moreover, one can easily check that if $d_{n,m}(S)$ denotes the image via $\iota_{\phi_S,L,m}$ of the class of $z_{n}(S)$ modulo $\p_S^m$ and $M(n)$ is the Kolyvagin index of $n$ from \eqref{M(n)-eq}, then $d_{n,m}(S)\in H^1\bigl(H_n,T_{f,\p}^\dagger/\p^m T_{f,\p}^\dagger\bigr)^{\mathcal G_n}$ for all $m\in\bigl\{1,\dots,M(n)\bigr\}$. Keeping isomorphism \eqref{res-iso-eq} in mind, for all $m\in\bigl\{1,\dots,M(n)\bigr\}$ we can define the \emph{Kolyvagin class} $c_{n,m}(S)$ as
\[ c_{n,m}(S)\defeq \res_{H_n/K}^{-1}\bigl(d_{n,m}(S)\bigr)\in H^1\bigl(K,T_{f,\p}^\dagger/\p^m T_{f,\p}^\dagger\bigr). \]
Now we can introduce our Kolyvagin systems.
\begin{definition} \label{defKolsys}
The \emph{Kolyvagin system} attached to $(f,\p,K,S)$ is 
\[ \kappa_S\defeq\Bigl\{c_{n,m}(S)\;\Big|\; n\in\mathcal I_{\Kol}^{(S)}(f),\;1\leq m\leq M(n)\Bigr\}. \]
\end{definition}
To simplify notation, for all $n\in\mathcal I_{\Kol}^{(S)}(f)$ set $c_n(S)\defeq c_{n,1}(S)$.
\begin{definition} \label{defKolsys-2}
The \emph{strict Kolyvagin system} attached to $(f,\p,K,S)$ is 
\[ \kappa_S^\star\defeq\Bigl\{c_n(S)\;\Big|\; n\in\mathcal I_{\Kol}^{(S)}(f)\Bigr\}. \]
\end{definition}  
Since $f$, $\p$, $K$ are understood as fixed, our notation does not reflect the dependence of $\kappa_S$ and $\kappa_S^\star$ on these data. By definition, $\kappa_S^\star\subset\kappa_S$.

\subsection{Kolyvagin's conjecture} 

The following conjecture was first proposed, for $S=N^-=1$ and with a slightly different formalism, in \cite[Conjecture A]{Masoero}.
%Let $f\in S_k(\Gamma_0(N))$ be a newform and $K/\Q$ be as before a quadratic imaginary field of discriminant $-D_K$; let $p\nmid ND_k$ be a prime number and consider the factorization $N=N^+N^-$  into coprime integers $N^+$ and $N^-$ such that a prime $q$ divides $N^+$ (respectively, $N^-$) if and only if $q$ is split (respectively, inert) in $K$; we assume that $N$ is square free and that $N^-$ is the product of an even number of primes. Set $M=N^+$ and $D=N^-$ and let $\phi_f=\JL(f)\in \mathcal{S}_k(M,D;\mathcal{O}_\wp)$ where $\mathcal{O}_\wp$ is the completion at $\wp=\p$ of the ring of integers $\mathcal{O}$ of a number field and $\wp$ has residual characteristic equal to $p$. Suppose that Assumption \ref{ass} for $(\phi_f,\wp,K)$ is satisfied
%and set $\kappa_{f,\infty}\defeq \kappa_{\phi_f,\infty}$ 
%and $\mathcal I_{\Kol}(f)=\mathcal I_{\Kol}(\phi_f)$.  

\begin{conjecture}[Kolyvagin's conjecture, higher weight] \label{kolyvagin-conj}
For each $S\in\mathcal{P}^\mathrm{indef}$, $\kappa_S\neq\{0\}$.
\end{conjecture}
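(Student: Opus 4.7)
The plan is to imitate the inductive strategy of W. Zhang \cite{zhang-selmer} in the weight $2$ case, using Wang's higher-weight indivisibility theorem \cite{wang} as the base input. For $S\in\mathcal P$, set $r^\pm(S)\defeq\dim_{k_\wp}\Sel_{S}(K,A_{\phi,\wp}^\dagger[\wp])^\pm$ and $r(S)\defeq r^+(S)+r^-(S)$, where $\phi=\JL(f)$. I shall prove the conjecture by induction on $r(S)$.

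For the base case, if $S\in\mathcal P^\mathrm{indef}$ satisfies $r^\eps(S)=1$ and $r^{-\eps}(S)=0$ for some sign $\eps\in\{\pm\}$ (the shape forced by parity considerations on the sign of the functional equation at level $N^+N^-S$), then Wang's theorem on the $p$-indivisibility of Heegner cycles over Shimura curves directly gives non-triviality of the basic Heegner class $c_1(S)$, and hence $\kappa_S\neq\{0\}$, using the identification of Lemma~\ref{lemma selmer}. For the inductive step, assume $S\in\mathcal P^\mathrm{indef}$ has $r(S)\geq 2$. Invoking Assumption~\ref{ass}(2) on the largeness of the image of the residual Galois representation over $\Q(\sqrt{p^*})$, a Chebotarev density argument produces admissible primes $\ell_1,\ell_2\nmid S$ such that the localization maps $\loc_{\ell_i}$ on the relevant $\pm$-eigenspaces of $\Sel_S(K,A_{\phi,\wp}^\dagger[\wp])$ are surjective onto the one-dimensional targets $H^1_\fin(K_{\ell_i},A_{\phi,\wp}^\dagger[\wp])$, while also ensuring $S\ell_1\ell_2\in\mathcal P^\mathrm{indef}$. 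Lemma~\ref{lemma7.2}(3) then gives $r(S\ell_1\ell_2)\leq r(S)-2$, so the induction hypothesis yields a nonzero class $c_n(S\ell_1\ell_2)\in\kappa_{S\ell_1\ell_2}$ for some Kolyvagin integer $n\in\mathcal I_\Kol^{(S\ell_1\ell_2)}(f)$.

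To lift this non-vanishing back to $\kappa_S$, I apply Corollary~\ref{coro2.5} (which relies on both explicit reciprocity laws, Theorems~\ref{ERL1} and~\ref{ERL2}): a non-vanishing singular localization at an admissible prime dividing $S\ell_1\ell_2$ is converted into a non-vanishing finite localization at a prime not dividing $S$, producing a nonzero Kolyvagin class at level $S$ indexed by a Kolyvagin integer essentially equal to $n\ell_1\ell_2$. The main obstacle is precisely the triangulation step: one must construct admissible primes satisfying the four technical conditions of Definition~\ref{admissible-def} while simultaneously prescribing their Frobenius action on Selmer classes in both eigenspaces, and preserving the parity needed to stay in $\mathcal P^\mathrm{indef}$. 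This quaternionic higher-weight adaptation of the Chebotarev argument in \cite[\S 7]{zhang-selmer} rests crucially on Assumption~\ref{ass}(2); the remaining conditions in Assumption~\ref{ass} enter through Wang's level-raising and reciprocity machinery already cited.
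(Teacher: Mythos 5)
Your proposal correctly identifies the overall strategy used in the paper: induction on the Selmer rank $r\defeq\dim_{k_\p}\Sel_S(K,\overline{T}_{f,\p}^\dagger)$, with Wang's result (Theorem~\ref{rank 0}) as the base case $r=1$, rank decrement via Lemma~\ref{lemma7.2}, and the reciprocity laws packaged in Corollary~\ref{coro2.5} to pass between levels $S$ and $S\ell_1\ell_2$. You also correctly note that the parity of $DS$ must be preserved to stay in $\mathcal P^\mathrm{indef}$, and that the admissible primes are produced by a Chebotarev argument relying on Assumption~\ref{ass}(2).

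However, there is a genuine gap at the heart of the inductive step. After the inductive hypothesis gives a nonzero class $c_n(S\ell_1\ell_2)\in\kappa^\star_{S\ell_1\ell_2}$, Corollary~\ref{coro2.5} (stated here, after unwinding, as the equivalence $\loc_{\ell_1}(c_n(S))\neq0 \Leftrightarrow \loc_{\ell_2}(c_n(S\ell_1\ell_2))\neq0$) is only useful if some class in $\kappa^\star_{S\ell_1\ell_2}$ has \emph{nonvanishing localization at} $\ell_2$. Mere nonvanishing of the class itself does not suffice: $\ell_2$ might lie in the base locus $\cB^\star_{S\ell_1\ell_2}$ of the derived Kolyvagin system, in which case $\loc_{\ell_2}(c_n(S\ell_1\ell_2))=0$ for every $n$ and the reciprocity law gives nothing at level $S$. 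Your sketch asserts that a ``non-vanishing singular localization'' is produced, but the induction does not supply this. You describe the ``triangulation step'' as merely the Chebotarev construction of admissible primes with prescribed Frobenius, but that part is comparatively routine; the real content of Proposition~\ref{prop masoero} (triangulation of Selmer groups) is the precise dimension bookkeeping---relating $\dim_{k_\p}\Sel_{S}(K,\overline{T}_{f,\p}^\dagger)^{\pm\epsilon_S}$ and $\dim_{k_\p}\Sel_{S,(\cB_S^\star)}(K,\overline{T}_{f,\p}^\dagger)^{\pm\epsilon_S}$ to the vanishing order $\nu_S^\star$---and it is exactly this dimension comparison, run at level $S\ell_1\ell_2$, that the paper uses to derive a contradiction from the assumption that $\ell_2$ is a base point. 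Your proposal neither states nor establishes this proposition, and without it the inductive step as written does not close. Incidentally, you also need Theorem~\ref{parity} (oddness of the Selmer rank) to justify the eigenspace asymmetry exploited in the contradiction argument, and the induction must be formulated for the strong system $\kappa_S^\star$ (i.e.\ with $m=1$) since that is what Corollary~\ref{coro2.5} governs; your base case then covers $r=1$ and the induction steps down by 2, which matches the paper.
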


This is a higher (even) weight counterpart of a conjecture for rational elliptic curves due to Kolyvagin (\cite[Conjecture A]{kolyvagin-selmer}). Actually, we are interested in the stronger 

\begin{conjecture}[Kolyvagin's conjecture, higher weight, strong form] \label{strong-kolyvagin-conj}
For each $S\in\mathcal{P}^\mathrm{indef}$, $\kappa_S^\star\not=\{0\}$.
\end{conjecture}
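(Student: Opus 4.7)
The plan is to prove Conjecture \ref{strong-kolyvagin-conj} by induction on the Selmer rank
\[
r(S) \defeq \dim_{k_\p}\Sel_S\bigl(K,A_{\phi,\p}^\dagger[\p]\bigr),
\]
adapting to higher weight the ``triangulation'' strategy used by W.~Zhang in weight~$2$. A parity computation using global duality and the choice of sign of the functional equation forces $r(S)$ to be of a prescribed parity for $S \in \mathcal{P}^{\mathrm{indef}}$, so the induction proceeds in steps of size two. The base case $r(S) = 1$ is essentially the content of the main theorem of \cite{wang}: in rank one, the reduction modulo $\p_S$ of the basic Heegner-cycle class $y_1(S) = \AJ_{\phi_S,\p_S,H_K}(\Delta_{1,N^+,N^-S})$ generates the Selmer group, so via Lemma \ref{lemma selmer} and the injection $\iota_{\phi_S,K,1}$ it yields a nonzero class $c_1(S) \in \kappa_S^\star$.

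For the inductive step, assume $r(S) \geq 3$. Using Chebotarev density applied to the residual representation $\overline{T}_{\phi,\p}^\dagger$, whose irreducibility over $\Gal\bigl(\bar\Q/\Q(\sqrt{p^*})\bigr)$ is guaranteed by Assumption \ref{ass}(2), I would produce two primes $\ell_1, \ell_2$ that are simultaneously admissible (Definition \ref{admissible-def}) and Kolyvagin (Definition \ref{defKol}) for $(f,S,\p,K)$, with Frobenius elements chosen so that $\loc_{\ell_i}$ is surjective onto the relevant $H^1_\mathrm{fin}$-summand on appropriate $\pm$-eigenspaces of $\Sel_S$. Applying Lemma \ref{lemma7.2}(3) twice then yields $r(S\ell_1\ell_2) = r(S) - 2$; since we have added an even number of admissible primes, $S' \defeq S\ell_1\ell_2$ still lies in $\mathcal{P}^{\mathrm{indef}}$. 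By the inductive hypothesis there exists $n \in \mathcal{I}^{(S')}_{\Kol}(f) \subseteq \mathcal{I}^{(S)}_{\Kol}(f)$ (Remark \ref{kol-rem}) with $c_n(S') \neq 0$.

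It then remains to transfer the non-vanishing from level $S'$ back to level $S$. The two explicit reciprocity laws of Theorems \ref{ERL1} and \ref{ERL2} (see also Corollary \ref{coro2.5}) provide a dictionary between residues $\partial_{\ell_i}$ at one level and values $v_{\ell_i}$ at the congruent level, up to $p$-adic units. Combining this with the standard Kolyvagin-type congruences that compute the residue at $\ell_i$ of a derived class $c_{n\ell_i}(\cdot)$ in terms of the value $v_{\ell_i}$ of $c_n(\cdot)$, one concludes that the non-vanishing of $c_n(S\ell_1\ell_2)$ forces the non-vanishing of $c_{n\ell_1\ell_2}(S) \in \kappa_S^\star$, completing the induction.

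The main obstacle is the triangulation step itself: producing two primes that are simultaneously admissible, Kolyvagin, and realise a sharp drop of the Selmer dimension by exactly two while tracking the $\pm$-eigenspace decomposition and matching the sign constraints $\epsilon_{\ell_i}$ of Theorem \ref{levelraising}(1), so that $S\ell_1\ell_2$ remains in $\mathcal{P}^{\mathrm{indef}}$. A secondary but still delicate verification is that the class produced by the reciprocity transfer actually lies in the \emph{strict} system $\kappa_S^\star$ (depth $m=1$ of the derivative tower), rather than merely in the larger $\kappa_S$, which requires controlling the Kolyvagin index $M(n\ell_1\ell_2)$ throughout the argument.
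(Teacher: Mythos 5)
Your proposal correctly identifies the overall shape of the argument (induction on the Selmer rank, parity forcing odd rank, Wang's theorem as the base case, dropping the rank by two via admissible primes, and the reciprocity laws to transfer non-vanishing between congruent levels), but there are two problems, one of which is a genuine gap and the other an outright impossibility.

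First, you propose to choose $\ell_1,\ell_2$ that are \emph{simultaneously} admissible (Definition \ref{admissible-def}) and Kolyvagin (Definition \ref{defKol}). These two notions are mutually exclusive: admissibility requires $p\nmid\ell^2-1$, hence $p\nmid\ell+1$, while the condition $M(\ell)>0$ defining a Kolyvagin prime requires $\ord_\p(\ell+1)>0$, i.e.\ $p\mid\ell+1$. In the actual argument the two roles are kept strictly separate: $\ell_1,\ell_2$ are admissible primes used for level raising and for the reciprocity transfer, while the Kolyvagin primes enter as the divisors of the auxiliary integers $n\in\mathcal{I}^{(S\ell_1\ell_2)}_{\Kol}(f)$. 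In particular, the congruence from Lemma \ref{McCallum1} that you invoke (``residue of $c_{n\ell}$ in terms of $v_\ell(c_n)$'') is valid only for Kolyvagin primes $\ell$ and cannot be applied at the admissible primes $\ell_1,\ell_2$; the transfer between levels $S$ and $S\ell_1\ell_2$ is instead effected by Corollary \ref{coro2.5}, which relates $v_{\ell_1}$ at level $S$ to $\partial_{\ell_2}$ at level $S\ell_1\ell_2$ via the intermediate definite form.

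Second, and more seriously, your final ``transfer'' step silently assumes that the non-vanishing of $c_n(S\ell_1\ell_2)$ forces the non-vanishing of $\loc_{\ell_2}\bigl(c_n(S\ell_1\ell_2)\bigr)$ (equivalently, that $\ell_2$ is not in the base locus $\cB^\star_{S\ell_1\ell_2}$). This is false in general: a nonzero Selmer class can certainly localise trivially at a given prime. Handling precisely this possibility is the technical heart of the proof. The paper argues by contradiction: assuming $\ell_2\in\cB^\star_{S\ell_1\ell_2}$, one has the inclusion $\Sel_{S\ell_1}\bigl(K,\overline{T}_{f,\p}^\dagger\bigr)^\pm\subset\Sel_{S\ell_1\ell_2,(\cB^\star_{S\ell_1\ell_2})}\bigl(K,\overline{T}_{f,\p}^\dagger\bigr)^\pm$, and one then invokes the full strength of the Triangulation Proposition \ref{prop masoero}, comparing the dimensions of the two $\pm$-eigenspaces of the relaxed Selmer group at the base locus against $\nu^\star_{S\ell_1\ell_2}$, splitting into the two cases $\epsilon=\epsilon_{S\ell_1\ell_2}$ and $\epsilon=-\epsilon_{S\ell_1\ell_2}$; each leads to a contradiction with the choice of eigensign $\epsilon$ in \eqref{eq7.1} and with Lemma \ref{lemma7.2}. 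None of this is present in your sketch. The sign/indefiniteness issue you flag as the ``main obstacle'' is, by contrast, trivial: adding two admissible primes preserves the parity of $\mu(DS)$ regardless of the choices of $\epsilon_{\ell_i}$.
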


The proof (under mild technical assumptions) of Conjecture \ref{strong-kolyvagin-conj} is the main result of this paper (see Theorem \ref{kol-thm}).

\subsection{Triangulation of Selmer groups} \label{triangulation-subsec}

Notation from \S\ref{sec-kolsys} is in force. Let $S\in\mathcal{P}^\mathrm{indef}$. First of all, we need a few auxiliary results that can be found in \cite{E-dVP} and \cite{Masoero} (strictly speaking, in \cite{Masoero} it is assumed that $N^-=S=1$, but the arguments carry over to our more general setting). 

\begin{lemma} \label{MacCallum0} 
Let $x_1,\dots,x_r$ be linearly independent elements of $H^1\bigl(K,\overline{T}_{\phi_f}^\dagger\bigr)$. There exist infinitely many $\ell\in\mathcal{P}_{\Kol}^{(S)}(f)$ such that $\loc_\ell(x_i)\neq0$ for $i=1,\dots,r$.
\end{lemma}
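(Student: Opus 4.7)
The plan is to apply the Chebotarev density theorem to a finite Galois extension of $\Q$ built out of $\overline{T}_{\phi_f}^\dagger$ and the classes $x_1,\dots,x_r$. First I would set $L\defeq\bar\Q^{\ker(\bar\rho)}$, where $\bar\rho:G_\Q\to\Aut\bigl(\overline{T}_{\phi_f}^\dagger\bigr)$ is the residual Galois representation. Assumption \ref{ass}(2), together with the fact that the image of $\bar\rho$ contains non-trivial scalars, yields via a standard cohomological argument the vanishing $H^1\bigl(\Gal(L/K),\overline{T}_{\phi_f}^\dagger\bigr)=0$. The inflation--restriction exact sequence then gives an injection
\[
H^1\bigl(K,\overline{T}_{\phi_f}^\dagger\bigr)\;\longmono\;\Hom_{\Gal(L/K)}\bigl(G_L^{\mathrm{ab}},\overline{T}_{\phi_f}^\dagger\bigr),
\]
and the linearly independent $x_i$ correspond to $k_\p$-linearly independent $\Gal(L/K)$-equivariant homomorphisms $\tilde x_i:G_L\to\overline{T}_{\phi_f}^\dagger$.

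Next I would let $M$ be the fixed field of $\bigcap_{i=1}^r\ker(\tilde x_i)$; this is a finite Galois extension of $\Q$ whose Galois group over $L$ embeds into $(\overline{T}_{\phi_f}^\dagger)^r$ via $\sigma\mapsto\bigl(\tilde x_1(\sigma),\dots,\tilde x_r(\sigma)\bigr)$. Combining the absolute irreducibility of $\overline{T}_{\phi_f}^\dagger$ as a $\Gal(L/K)$-module with the $k_\p$-linear independence of the $\tilde x_i$, a Goursat-type argument (in the spirit of \cite{Masoero}) identifies this image with the full product $(\overline{T}_{\phi_f}^\dagger)^r$. Since $\bar\rho$ is odd, complex conjugation $\tau\in\Gal(L/\Q)$ acts on $\overline{T}_{\phi_f}^\dagger$ with eigenvalues $\pm 1$, so for any prime $\ell$ unramified in $M$ whose Frobenius class in $\Gal(L/\Q)$ equals that of $\tau$, the Eichler--Shimura relation gives $\p\mid a_\ell(f)$ and $\ell\equiv-1\pmod p$, while the non-triviality of $\tau|_K$ forces $\ell$ to be inert in $K$. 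Thus, after excluding the finitely many primes dividing $NSp$ or ramifying in $M$, any such $\ell$ lies in $\mathcal{P}_{\Kol}^{(S)}(f)$.

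The local non-vanishing condition $\loc_\ell(x_i)\neq 0$ at such an $\ell$ then translates, via the splitting $H^1(K_\ell,\overline{T}_{\phi_f}^\dagger)=H^1_{\mathrm{fin}}(K_\ell,\overline{T}_{\phi_f}^\dagger)\oplus H^1_{\mathrm{sing}}(K_\ell,\overline{T}_{\phi_f}^\dagger)$ and the evaluation-at-Frobenius identifications of both summands (recall that $\Frob_\ell$ has eigenvalues $\pm 1$ on $\overline{T}_{\phi_f}^\dagger$), into the non-vanishing of specific $\Frob_\ell$-eigencomponents of $\tilde x_i(\Frob_\lambda)\in\overline{T}_{\phi_f}^\dagger$ for a chosen prime $\lambda$ of $M$ above $\ell$. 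Finally, applying Chebotarev density to $M/\Q$ finishes the argument: the coset $\tau\cdot\Gal(M/L)\subset\Gal(M/\Q)$ surjects onto each factor of $(\overline{T}_{\phi_f}^\dagger)^r$ through the projections determined by the $\tilde x_i$, so the locus where all $r$ non-vanishing conditions are satisfied simultaneously is a non-empty union of conjugacy classes in $\Gal(M/\Q)$, and each yields infinitely many primes $\ell\in\mathcal{P}_{\Kol}^{(S)}(f)$ with the desired property. The main obstacle is the Goursat-type step identifying $\Gal(M/L)$ with $(\overline{T}_{\phi_f}^\dagger)^r$, which is exactly where Assumption \ref{ass}(2) plays its decisive role; once this is in hand, the Chebotarev application is routine.
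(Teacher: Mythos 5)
Your overall strategy — passing to the field $L$ cut out by the residual representation, using Assumption~\ref{ass}(2) and a scalar in the image to kill $H^1(\Gal(L/K),\overline{T}^\dagger_{\phi_f})$, restricting the classes to homomorphisms $\tilde x_i$, cutting out a further extension, and applying \v{C}ebotarev — is the same Chebotarev blueprint the paper invokes via \cite[Lemma 7.5]{Masoero}. However, as written, your argument has two genuine gaps exactly where the technical content of Masoero's lemma lives.

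First, $M=\bar\Q^{\bigcap_i\ker\tilde x_i}$ is Galois over $K$ (the $\tilde x_i$ are $\Gal(L/K)$-equivariant, so the intersection of kernels is $G_K$-stable), but it is \emph{not} Galois over $\Q$ in general: complex conjugation $\tau$ acts on the $\tilde x_i$ and the span of $\{\tilde x_1,\dots,\tilde x_r\}$ need not be $\tau$-stable, so $\bigcap_i\ker\tilde x_i$ need not be $G_\Q$-normal. One must replace $M$ by its Galois closure over $\Q$, equivalently cut out by the $\tau$-stable module generated by the $\tilde x_i$, before one can even speak of conjugacy classes in $\Gal(M/\Q)$.

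Second, and more seriously, the final \v{C}ebotarev step is not justified. Since $\ell$ is inert in $K$, the element at which the unramified local cohomology is evaluated is $\Frob_\lambda=\Frob_\ell^2$, not $\Frob_\ell$; and if $\Frob_\ell\mapsto\tau$ in $\Gal(L/\Q)$, then $\Frob_\lambda$ is automatically constrained to lie in the $\tau$-fixed subgroup $\Gal(M/L)^\tau$. Thus what you actually need is an element $g\in\Gal(M/L)^\tau$ (equivalently, a homomorphism in $\Hom(C,\overline{T}^\dagger_{\phi_f})^+$, $C$ being the $\tau$-stable module generated by the $x_i$) with $\tilde x_i(g)\neq 0$ for all $i$, together with a lift of $\tau$ to $\Gal(M/\Q)$ whose square realizes $g$. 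Your claim that ``the coset $\tau\cdot\Gal(M/L)$ surjects onto each factor of $(\overline{T}^\dagger_{\phi_f})^r$'' does not deliver this: surjectivity onto each factor separately does not yield a nonempty locus where all $r$ components are simultaneously nonzero, and it also ignores the quadratic passage $\sigma\mapsto\sigma^2$, under which the image lands in the $+$-eigenspace rather than filling the whole of $(\overline{T}^\dagger_{\phi_f})^r$. The paper sidesteps both points precisely by selecting $\varphi\in\Hom(C,\overline{T}^\dagger_{\phi_f})^+$ with $\varphi(x_i)\neq 0$ for all $i$ — existence of such $\varphi$ is the nontrivial sublemma — and then matching $\varphi$ to an arithmetic Frobenius by \v{C}ebotarev. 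Until you supply the $+$-eigenspace selection and the nonemptiness argument, the last paragraph of your proof is an assertion rather than a proof.
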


\begin{proof} As in \cite{Masoero}, this is an application of \v{C}ebotarev's density theorem. More precisely, with notation as in \cite{Masoero}, we apply \cite[Lemma 7.5]{Masoero} to the Galois representation $\overline{T}_{\phi_S}^\dagger\simeq \overline{T}_{\phi_f}^\dagger$, the finite subgroup $C$ of $H^1(K,\overline{T}_{\phi_f}^\dagger)$ generated by $\{x_1,\dots,x_r\}$ and a map $\varphi\in\Hom\bigl(C,\overline{T}_{\phi_f}^\dagger\bigr)^+$ such that
$\varphi(x_i)\neq 0$ for $i=1,\dots,r$.
Let $L\defeq K\bigl(\overline{T}_{\phi_f}^\dagger\bigr)$ be the composite of $K$ with the subfield of $\bar\Q$ fixed by the subgroup of $G_\Q$ that acts trivially on $\overline{T}_{\phi_f}^\dagger$. Then \v{C}ebotarev's density theorem implies that there exist infinitely many $\ell\in\mathcal{P}_{\Kol}^{(S)}(f)$ such that $\varphi$ corresponds to an arithmetic Frobenius $\Frob_\lambda$ under the natural isomorphism
\[ \Gal(L_C/L)\overset\simeq\longrightarrow\Hom\bigl(C,\overline{T}_{\phi_f}^\dagger\bigr),\quad\sigma\longmapsto\varphi_\sigma \]
for a suitable finite Galois extension $L_C/L$ and some prime $\lambda$ of $L$ above $\ell$. Avoiding the finitely many primes at which the classes $x_1,\dots, x_r$ ramify, we find infinitely many $\ell\in\mathcal{P}_{\Kol}^{(S)}(f)$ with the previous property such that $\Frob_\lambda$ generates the decomposition group of a prime of $L_C$ above $\lambda$. Finally, the lemma follows by observing that $\loc_\ell(x_i)=0$ if and only if $\varphi(x_i)=\varphi_{\Frob_\lambda}(x_i)=0$. \end{proof}

Let $k_\p\defeq\cO_\p/\p\cO_\p$ be the residue field of $\cO_F$ at $\p$. In what follows, the superscript $\pm$ will denote the eigenspace on which the non-trivial element of $\Gal(K_\ell/\Q_\ell)\simeq\Gal(K/\Q)$ acts as multiplication by $\pm1$.

\begin{lemma} \label{dimension 1}
Let $\ell\in\mathcal{P}_{\Kol}^{(S)}(f)$ and let $\bullet\in\{\fin,\sing\}$. The $k_\p$-vector space $H^1_\bullet\bigl(K_\ell,\overline{T}_{f,\p}^\dagger\bigr)^\pm$ is $1$-dimensional. 
\end{lemma}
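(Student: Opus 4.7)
The plan is to reduce both claims to an explicit computation of the geometric Frobenius $\Frob_\ell$ acting on the mod-$\p$ representation $M\defeq\overline{T}_{f,\p}^\dagger$, which is unramified at $\ell$ since $\ell\nmid Np$, and then to track the induced $\Gal(K/\Q)$-action on the resulting cohomology groups.

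First, I will verify that the characteristic polynomial of $\Frob_\ell$ on $M$ reduces to $(X-1)(X+1)$ modulo $\p$. The Kolyvagin conditions of Definition \ref{defKol} give $\bar a_\ell(f)\equiv 0\pmod{\p}$ and $\ell\equiv -1\pmod{p}$, and combining this with the characteristic polynomial recalled in \S \ref{galois-subsec} together with the self-dual twist collapses the polynomial to $X^2-1$. Thus $M$ decomposes as the direct sum of two distinct $1$-dimensional $\Frob_\ell$-eigenspaces with eigenvalues $\pm 1$.

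Next, since $\ell$ is inert in $K$, the extension $K_\ell/\Q_\ell$ is the unramified quadratic extension, so the Frobenius of $K_\ell$ is $\Frob_\ell^2$, which acts trivially on $M$. A standard computation then gives
\[
H^1_\fin(K_\ell,M)\simeq M\qquad\text{and}\qquad H^1_\sing(K_\ell,M)\simeq M(-1),
\]
where the second isomorphism uses the description of the pro-$p$ quotient of the tame inertia as $\Z_p(1)$ and the fact that $\ell^2\equiv 1\pmod{p}$ makes $\Frob_\ell^2$ act trivially also on the Tate twist. Both groups are therefore $2$-dimensional over $k_\p$.

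Finally, I will identify the action of $\Gal(K/\Q)$ on each side: lifting the non-trivial element to $\Frob_\ell\in G_{\Q_\ell}$, a direct cocycle computation shows that the induced action on $H^1_\fin(K_\ell,M)\simeq M$ coincides with the natural $\Frob_\ell$-action on $M$, so the $\pm$-parts are the corresponding $\pm 1$-eigenspaces, each of dimension $1$. On $H^1_\sing(K_\ell,M)\simeq M(-1)$ the action is by $\Frob_\ell$ on the Tate twist, which multiplies each eigenvalue by $\chi_\cyc^{-1}(\Frob_\ell)=\ell\equiv -1\pmod{p}$; this swaps the two eigenspaces but preserves their dimensions. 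The main obstacle, and essentially the only subtle content beyond Definition \ref{defKol} and routine local computations, is the careful tracking of Frobenius and Tate-twist conventions which produces the sign flip between the $\pm$-parts of $H^1_\fin$ and those of $H^1_\sing$.
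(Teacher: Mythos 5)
Your proof is correct. Since the paper's own proof of this lemma is just a citation to a reference, there is no internal argument to compare against; your computation — the Kolyvagin conditions $a_\ell(f)\equiv 0$ and $\ell\equiv -1\pmod{\p}$ force the characteristic polynomial of $\Frob_\ell$ on $\overline{T}_{f,\p}^\dagger$ to reduce to $(X-1)(X+1)$ modulo $\p$, both finite and singular parts are then $2$-dimensional over $k_\p$, and the $\Gal(K_\ell/\Q_\ell)$-action via a Frobenius lift (with the Tate-twist sign flip on the singular side) cuts each into two lines — is exactly the standard argument one would find in that reference.
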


\begin{proof} See, \emph{e.g.}, \cite[\S 5.3]{E-dVP}. \end{proof}

\begin{lemma}\label{MacCallum}
Let $T\in\mathcal{I}_{\Kol}^{(S)}(f)$ and $\ell\in\mathcal{P}_{\Kol}^{(S)}(f)$ with $\ell\nmid T$. Each group $\Sel_{S,(\ell),[T]}\bigl(K,\overline{T}_{f,\p}^\dagger\bigr)^\pm$ is non-trivial. 
\end{lemma}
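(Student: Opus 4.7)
The plan is to prove the non-triviality by a single application of Poitou--Tate global duality (the Greenberg--Wiles formula) in each $\Gal(K/\Q)$-eigenspace. Writing $V\defeq\overline{T}^\dagger_{f,\p}$ and $\mathcal L\defeq(S,(\ell),[T])$, the first step is to identify the Poitou--Tate dual Selmer structure as $\mathcal L^*=(S,[\ell])$: at $\ell$, the relaxed local condition $H^1(K_\ell,V)$ dualizes to the strict condition $\{0\}$; at each $v\,|\,T$, the singular condition $H^1_\sing(K_v,V)$ dualizes under local Tate duality to the finite (standard) condition $H^1_\fin(K_v,V)$, so the $[T]$-part disappears on the dual side; and the other local conditions (ordinary at $v\,|\,pDS$, finite elsewhere) are self-dual.

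Next, one computes the relative Euler characteristic in each $\pm$ eigenspace, comparing $\mathcal L$ with the standard $S$-Selmer structure $\mathcal F_0$. Only $\ell$ and the primes dividing $T$ contribute. By Lemma \ref{dimension 1}, both $H^1_\fin(K_\ell,V)^\pm$ and $H^1_\sing(K_\ell,V)^\pm$ are one-dimensional, so $\dim H^1(K_\ell,V)^\pm=2$ and the contribution at $\ell$ is $2-1=1$; at each $v\,|\,T$, the contribution equals $\dim H^1_\sing(K_v,V)^\pm-\dim H^1_\fin(K_v,V)^\pm=1-1=0$. Hence the total relative contribution in each $\pm$ is exactly $+1$, and Greenberg--Wiles yields
\[ \dim\Sel_{S,(\ell),[T]}(K,V)^\pm \;=\; \dim\Sel_{S,[\ell]}(K,V)^\pm+1\;\geq\; 1, \]
which is the claimed non-triviality.

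The main obstacle is the careful tracking of the sign convention in Greenberg--Wiles when passing to $\pm$ eigenspaces under complex conjugation on $\Gal(K/\Q)$. Since the self-dual pairing on $V$ takes values in $k_\p(1)$ and complex conjugation acts by $-1$ on $\mu_{p^\infty}$, one must choose the identification $V\simeq V^*$ so that the formula reads $\dim\Sel_\mathcal L^\pm-\dim\Sel_{\mathcal L^*}^\pm=\chi^\pm$ in each eigenspace (rather than with a $\pm\!\mapsto\!\mp$ swap that would otherwise introduce a dependence on $\dim\Sel_S(K,V)^\mp$ and force an auxiliary case analysis using Lemma \ref{lemma7.2}). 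Adopting the same convention used in \cite{zhang-selmer} and \cite{Masoero} for the analogous triangulation lemmas closes the argument.
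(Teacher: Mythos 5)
Your proof is correct, and the technique — a single application of Greenberg--Wiles / Poitou--Tate global duality in each $\Gal(K/\Q)$-eigenspace, comparing the $(S,(\ell),[T])$ structure with its dual — is the standard one, presumably the same method used in \cite[Proposition 7.8]{Masoero}, to which the paper defers. The relative local contributions ($+1$ at $\ell$, $0$ at each $v\mid T$, $0$ elsewhere by self-duality of the $S$-Selmer structure) are computed correctly, and since the dual group has non-negative dimension, the non-triviality of $\Sel_{S,(\ell),[T]}(K,\overline{T}_{f,\p}^\dagger)^\pm$ follows.

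Two small caveats. First, your notation $\Sel_{S,[\ell]}$ clashes with the paper's convention: in the paper $\Sel_{S,[\ell]}$ imposes the \emph{singular} condition at $\ell$, whereas the Poitou--Tate dual of the relaxed condition is the \emph{strict} (zero) condition at $\ell$ together with the finite condition at $v\mid T$ — that is, the kernel of $\loc_\ell$ on $\Sel_S$, which by Lemma \ref{lemma7.2} can be written as $\Sel_{S\ell}$. You describe the correct dual in words, so this is only a notational slip, but it is worth fixing. Second, your discussion of the sign convention is a little off: there is no freedom in ``choosing'' the identification $V\simeq V^*$ to make the eigenspaces match — the self-dual pairing $V\times V\to k_\p(1)$ is $G_\Q$-equivariant, and one checks directly that under the induced canonical isomorphism $V\simeq V^*$ the $\pm$-eigenspaces correspond (the Tate pairing at an inert prime pairs $H^1(K_\ell,V)^\pm$ with $H^1(K_\ell,V^*)^\pm$ since $\Gal(K_\ell/\Q_\ell)$ acts trivially on $H^2(K_\ell,\mu_p)$). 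In any case, even if one had $\dim\Sel_{\mathcal L}^\epsilon=\dim\Sel_{\mathcal L^*}^{-\epsilon}+1$ the conclusion would still follow, so the sign bookkeeping does not affect the validity of the argument — but as written the justification is slightly misleading.
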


\begin{proof} See, \emph{e.g.}, the proof of \cite[Proposition 7.8]{Masoero}. \end{proof}

\begin{lemma} \label{McCallum1}
Let $\ell\in\mathcal{P}_{\Kol}^{(S)}(f)$ and $n\in\mathcal{I}_{\Kol}^{(S)}(f)$ with $\ell\nmid n$. There is an isomorphism 
\[ \vartheta_\ell:H^1_\mathrm{fin}\bigl(K_\ell,\overline{T}_{f,\p}^\dagger\bigr)\longrightarrow H^1_\mathrm{sing}\bigl(K_\ell,\overline{T}_{f,\p}^\dagger\bigr) \] 
and an equality 
\[ \loc_\ell\bigl(c_{n\ell}(S)\bigr)=\vartheta_\ell\Bigl(\loc_\ell\bigl(c_{n}(S)\bigr)\!\Bigr). \] 
\end{lemma}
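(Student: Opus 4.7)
The plan is to adapt the classical finite-singular comparison for Kolyvagin derivatives (see \cite[Proposition 3.19]{zhang-selmer} in the weight-$2$ case, and the $S=N^-=1$ analogue in \cite[\S 7.4]{Masoero}) to the present higher-weight quaternionic setting, combining the local structure of $\overline{T}_{f,\p}^\dagger$ at $\ell$ with the Euler system norm relation for Heegner cycles.

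For the construction of $\vartheta_\ell$, first I would analyze $\overline{T}_{f,\p}^\dagger$ locally at $\ell$. Since $\ell\nmid NSp$, this module is unramified as a $G_{K_\ell}$-module, and the Kolyvagin congruences $\ell\equiv-1\pmod{p}$ and $a_\ell(f)\equiv0\pmod{\p}$, combined with the effect of the self-dual twist on the Hecke polynomial $X^2-a_\ell(f)X+\ell^{k-1}$, imply $\Frob_{\Q_\ell}^{\,2}\equiv1\pmod{\p}$ on $\overline{T}_{f,\p}^\dagger$. Since $K_\ell/\Q_\ell$ is unramified of degree $2$, the arithmetic Frobenius $\Phi_\ell=\Frob_{K_\ell}$ therefore acts trivially on $\overline{T}_{f,\p}^\dagger$. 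After fixing a topological generator of the tame pro-$p$ quotient of $I_{K_\ell}$, one obtains canonical identifications
\[ H^1_\fin(K_\ell,\overline{T}_{f,\p}^\dagger)\simeq\overline{T}_{f,\p}^\dagger\simeq H^1_\sing(K_\ell,\overline{T}_{f,\p}^\dagger), \]
compatible with the $\pm$-decomposition of Lemma \ref{dimension 1}, and $\vartheta_\ell$ is defined as the resulting composite.

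For the equality, I would use the Kolyvagin derivative recipe of \S\ref{sec-kolsys} together with the Euler-system norm compatibility for Heegner cycles, which in this setting should take the form
\[ \cor_{H_{n\ell}/H_n}\bigl(y_{n\ell}(S)\bigr)=a_\ell(f_S)\cdot y_n(S)\in\Lambda_{\phi_S,\p_S}(H_n). \]
Combining this with the standard telescoping identity $(\sigma_\ell-1)D_\ell=(\ell+1)\cdot\mathrm{id}-\sum_{\sigma\in G_\ell}\sigma$ and the congruences $\ell+1\equiv0\equiv a_\ell(f_S)\pmod{\p}$, a direct cocycle computation (of the type carried out in \cite[\S4]{Gross} for elliptic curves, or in \cite{Masoero} for the higher weight case with $N^-=1$) shows that $\loc_\ell(c_{n\ell}(S))$ has trivial finite part at $\ell$ and that its singular part coincides with $\vartheta_\ell(\loc_\ell(c_n(S)))$, where one uses that $\loc_\ell(c_n(S))\in H^1_\fin$ because $H_n/K$ is unramified at $\ell$ (as $\ell\nmid n$).

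The main obstacle is verifying the Euler-system norm relation for the specific Heegner cycles $\Delta_{\mathfrak a,N^+,N^-S}$ of Definition \ref{defHeegnercycle} on the Kuga--Sato varieties $W_{k,d}(N^+,N^-S)$: one must track the compatibility of the projectors $\epsilon_W\epsilon_d$, the $p$-adic Abel--Jacobi map \eqref{AJ1} and the Kummer-type lift $\iota_{\phi_S,H_n,1}$ with an isogeny-based norm from conductor $n\ell$ to conductor $n$ at the Shimura-curve level. This distribution relation is implicit in the work of \cite{wang} and extends \cite{Nek,LV} from the classical modular-curve case to the quaternionic setting; once it is in place, the local cocycle calculation at $\ell$ is essentially formal and identical to the one in the classical Kolyvagin story.
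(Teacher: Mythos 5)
Your reconstruction is correct and is precisely the standard Kolyvagin finite--singular comparison: the unramified analysis of $\overline{T}_{f,\p}^\dagger$ at a Kolyvagin prime giving $\Frob_{K_\ell}$ trivial and hence canonical identifications of $H^1_\fin$ and $H^1_\sing$ with $\overline{T}_{f,\p}^\dagger$, the Euler-system norm relation combined with the telescoping identity for $D_\ell$, and the cocycle computation identifying the residue of the derived class with the finite part of the undeformed one. The paper itself proves the lemma by a bare citation to \cite[Proposition 5.7, (iv)]{E-dVP}, which is exactly the quaternionic higher-weight version of the argument you sketch, so there is no divergence in approach.
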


\begin{proof} See, \emph{e.g.}, \cite[Proposition 5.7, (iv)]{E-dVP}. \end{proof}

The next definition introduces a notion that will prove useful in subsequent arguments.

\begin{definition} 
A \emph{base point} of $\kappa_S^\star$ is a prime number $\ell$ such that 
\begin{itemize}
\item $\ell\nmid D_KNp$; 
\item $\loc_\ell\bigl(c_n(S)\bigr)=0$ for all $n\in\mathcal{I}_{\Kol}^{(S)}(f)$.
\end{itemize}
The \emph{base locus} of $\kappa_S^\star$ is the set $\cB_S^\star$ of all base points of $\kappa_S^\star$.
\end{definition}

For every $n\in\mathcal I_{\Kol}^{(S)}(f)$, let us write $\nu(n)$ for the number of prime factors of $n$; recall the Kolyvagin index $M(n)$ of $n$ that was introduced in \eqref{M(n)-eq}.

\begin{definition} \label{vanishing-def}
Assume that $\kappa_S\neq\{0\}$. The \emph{vanishing order} of $\kappa_S$ is
\[ \nu_S\defeq\min\Bigl\{\nu(n)\;\Big|\;\text{$n\in\mathcal I_{\Kol}^{(S)}(f)$ and $c_{n,m}(S)\not=0$ for some $m\leq M(n)$}\Bigr\}\in\N. \]
\end{definition}

We can give a similar definition for $\kappa_S^\star$, as follows.

\begin{definition} \label{vanishing-def-2}
Assume that $\kappa_S^\star\neq\{0\}$. The \emph{vanishing order} of $\kappa_S^\star$ is
\[ \nu_S^\star\defeq\min\Bigl\{\nu(n)\;\Big|\;\text{$n\in\mathcal I_{\Kol}^{(S)}(f)$ and $c_n(S)\not=0$}\Bigr\}\in\N. \]
\end{definition}
Clearly, $\nu_S\leq\nu_S^\star$. Let $\epsilon(f)\in\{\pm1\}$ be the root number of $f$ over $\Q$, \emph{i.e.}, the sign of the functional equation of $L(f,s)$, and define 
\[ \epsilon_S\defeq\epsilon(f)\cdot(-1)^{\nu_S^\star+1}\in\{\pm1\}. \] 
The next result, which is implicit in \cite{Masoero}, is key for proving Kolyvagin's conjecture, which will be done in \S\ref{sec KolConj}. Our proof follows the arguments in \cite[Lemma 8.4]{zhang-selmer}; for completeness, we work out the strategy from \cite{zhang-selmer} in our more general setting. 

\begin{proposition}[Triangulation of Selmer groups]\label{prop masoero}
Assume that $\kappa_S^\star\neq\{0\}$. Then 
\begin{enumerate}
\item $\dim_{k_\p}\Bigl(\Sel_{S}\bigl(K,\overline{T}_{f,\p}^\dagger\bigr)^{\epsilon_S}\Bigr)=\nu_S^\star+1$;
\item $\dim_{k_\p}\Bigl(\Sel_{S}\bigl(K/\overline{T}_{f,\p}^\dagger\bigr)^{-\epsilon_S}\Bigr)\leq\nu_S^\star$; 
\item $\Sel_{S,(\cB_S^\star)}\bigl(K,\overline{T}_{f,\p}^\dagger\bigr)^{\epsilon_S}=\Sel_{S}\bigl(K,\overline{T}_{f,\p}^\dagger\bigr)^{\epsilon_S}$;
\item $\dim_{k_\p}\Bigl(\Sel_{S,(\cB_S^\star)}\bigl(K,\overline{T}_{f,\p}^\dagger\bigr)^{-\epsilon_S}\!\Bigr)\leq\nu_S^\star$. 
\end{enumerate}
\end{proposition}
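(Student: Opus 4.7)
\medskip

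\noindent\emph{Proof plan.} The plan is to follow closely the strategy of \cite[Lemma~8.4]{zhang-selmer}, adapted to incorporate the extra admissible level $S$, the weight $k\geq4$, and our formalism of Kolyvagin classes. The cornerstone is the sign computation: complex conjugation acts on $c_n(S)$ by $\epsilon(f)\cdot(-1)^{\nu(n)+1}$, a consequence of the action of $\Gal(H_n/\Q)$ on Heegner cycles together with the antisymmetry of the derivative operators $D_n$ (compare \cite[Proposition~4.4]{Masoero}). Fix $n_0\in\mathcal I_{\Kol}^{(S)}(f)$ with $\nu(n_0)=\nu_S^\star$ and $c_{n_0}(S)\neq 0$; the sign rule places $c_{n_0}(S)$ in the $\epsilon_S$-eigenspace. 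Applying Lemma~\ref{McCallum1} to each $\ell\mid n_0$ and using the vanishing $c_{n_0/\ell}(S)=0$ dictated by the minimality of $\nu_S^\star$, the singular residue satisfies $\loc_\ell\bigl(c_{n_0}(S)\bigr)=\vartheta_\ell\bigl(\loc_\ell(c_{n_0/\ell}(S))\bigr)=0$ for every $\ell\mid n_0$, so $c_{n_0}(S)$ belongs to $\Sel_S\bigl(K,\overline{T}_{f,\p}^\dagger\bigr)^{\epsilon_S}$. This yields one Selmer class which I will feed into the later duality machine.

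The bounds in (1) and (2) would then be obtained by combining Lemma~\ref{McCallum1} with global Poitou--Tate duality for the self-dual representation $\overline{T}_{f,\p}^\dagger$. Suppose for contradiction that $\dim_{k_\p}\Sel_S^{\epsilon_S}\geq\nu_S^\star+2$ or $\dim_{k_\p}\Sel_S^{-\epsilon_S}\geq\nu_S^\star+1$. Using Lemma~\ref{MacCallum0} (together with Lemma~\ref{dimension 1} to pin down the rank of local cohomology eigenspaces), I would choose a Kolyvagin prime $\ell\nmid n_0$ at which the localisation map on the overflowing eigenspace has rank strictly greater than what the existing Kolyvagin classes can account for. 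Plugging $\ell$ (and possibly a second Chebotarev-produced prime) into the reciprocity relation of Lemma~\ref{McCallum1} would then output a nonzero class $c_n(S)$ with $\nu(n)<\nu_S^\star$, obtained after parity-based cancellation of local invariants at the primes dividing $n_0$, contradicting the definition of $\nu_S^\star$. The sharp asymmetry between the two eigenspaces --- equality for $\epsilon_S$ versus inequality for $-\epsilon_S$ --- is dictated by the sign $\epsilon(f)(-1)^{\nu(n)+1}$: at the minimal vanishing level, Kolyvagin classes contribute only to the $\epsilon_S$-side, while the $-\epsilon_S$-side can only draw on classes $c_n(S)$ with $\nu(n)=\nu_S^\star+1$ or on dual classes produced through Poitou--Tate pairing with the $\epsilon_S$-side.

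For parts~(3) and~(4) I would exploit the defining property of the base locus, $\loc_\ell(c_n(S))=0$ for every $n\in\mathcal I_{\Kol}^{(S)}(f)$ and every $\ell\in\cB_S^\star$. Given a putative class $c\in\Sel_{S,(\cB_S^\star)}\bigl(K,\overline{T}_{f,\p}^\dagger\bigr)^{\epsilon_S}$ with nontrivial residue $\partial_\ell(c)$ at some $\ell\in\cB_S^\star$, global reciprocity applied to the pair $\bigl(c,c_n(S)\bigr)$ for a suitably chosen $n$ obtained via Lemma~\ref{MacCallum0} would compel $\loc_\ell(c_n(S))\neq 0$, contradicting $\ell\in\cB_S^\star$. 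Hence $\Sel_{S,(\cB_S^\star)}^{\epsilon_S}=\Sel_S^{\epsilon_S}$, which is~(3), and~(4) follows from the same Poitou--Tate argument combined with the bound already established in~(2).

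The main obstacle I anticipate is the careful bookkeeping of signs, eigenspaces, and reciprocity contributions when running the induction on $\nu$: one must verify that the Chebotarev-produced primes can be chosen inside $\mathcal P_{\Kol}^{(S)}(f)$ (so that Theorem~\ref{levelraising} and Lemma~\ref{lemma selmer} apply uniformly) while simultaneously achieving the required local rank conditions, and that the cancellations at primes in $n_0$ genuinely produce a smaller-$\nu$ nonzero Kolyvagin class rather than a mere reshuffling. All the cohomological inputs --- Lemmas~\ref{MacCallum0}--\ref{McCallum1}, the explicit reciprocity laws of Theorems~\ref{ERL1}--\ref{ERL2}, and the self-duality of $\overline{T}_{f,\p}^\dagger$ --- are already in place; the residual task is to assemble them into a coherent inductive scheme along the lines of \cite[\S~8]{zhang-selmer}.
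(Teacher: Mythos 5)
Your plan correctly identifies the overall strategy (follow Zhang's Lemma 8.4), the sign rule $c_n(S)\in H^1\bigl(K,\overline{T}_{f,\p}^\dagger\bigr)^{\epsilon(f)\cdot(-1)^{\nu(n)+1}}$, and the core ingredients (Lemmas~\ref{MacCallum0}--\ref{McCallum1}, global Poitou--Tate duality, the defining property of $\cB_S^\star$). The identification that $c_{n_0}(S)$ lands in $\Sel_S^{\epsilon_S}$ via Lemma~\ref{McCallum1} and minimality is also correct. However, there is a genuine gap and one argument that, as written, does not stand.

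The gap is the lower bound $\dim_{k_\p}\Sel_S\bigl(K,\overline{T}_{f,\p}^\dagger\bigr)^{\epsilon_S}\geq\nu_S^\star+1$. Producing a single nonzero class $c_{n_0}(S)\in\Sel_S^{\epsilon_S}$ only shows the dimension is at least $1$. The paper's proof constructs, by a delicate induction (Step~1) that alternates \v{C}ebotarev choices with global reciprocity, an entire chain of auxiliary primes $\ell_1,\dots,\ell_{2\nu_S^\star+1}$ and Kolyvagin integers $n_i=\ell_i\cdots\ell_{\nu_S^\star+i-1}$ with a staircase (``triangular'') pattern of localisations: $\loc_{\ell_j}\bigl(c_{n_i}(S)\bigr)=0$ for $j<\nu_S^\star+i$ and $\neq 0$ at $j=\nu_S^\star+i$. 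That pattern is precisely what yields $\nu_S^\star+1$ linearly independent classes in $\Sel_S^{\epsilon_S}$, and the same chain is recycled verbatim in Steps~3 and~4 to prove that this system is a basis of $\Sel_{S,(\cB_S^\star)}^{\epsilon_S}$ and to bound the $-\epsilon_S$ eigenspace. You cannot, for instance, substitute the parity theorem for this lower bound: Theorem~\ref{parity} is proved later in the paper and in any case says only that the total dimension is odd, not which eigenspace dominates.

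Your proposed contradiction for the upper bounds --- that an oversized Selmer eigenspace would, ``after parity-based cancellation,'' produce a nonzero $c_n(S)$ with $\nu(n)<\nu_S^\star$ --- does not describe a real mechanism. Kolyvagin's construction goes from $n$ to $n\ell$, not backwards; there is no operation that manufactures a class at smaller $\nu$ from classes at larger $\nu$. What actually happens in Step~4 of the paper is dual in spirit but different in detail: given $\nu_S^\star+1$ independent classes $d_1,\dots,d_{\nu_S^\star+1}$ in $\Sel_{S,(\cB_S^\star)}^{-\epsilon_S}$, one localises them at the $\nu_S^\star$ primes $\ell_{\nu_S^\star+1},\dots,\ell_{2\nu_S^\star}$ (each local eigenspace being $1$-dimensional by Lemma~\ref{dimension 1}), finds a nontrivial combination $d$ killed by all these localisations, and then runs Poitou--Tate against a carefully chosen $c_{n''}(S)$: the sum over $\cB_S^\star$ vanishes by definition of the base locus, the terms at $\ell_{\nu_S^\star+1},\dots,\ell_{2\nu_S^\star}$ vanish by construction, and the sole surviving term at the new prime $\ell_{2\nu_S^\star+1}$ is nonzero --- a contradiction. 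Similarly, the proof of (3) is not ``show every relaxed class has trivial residue at base points''; it first subtracts a combination of the triangular basis and then runs the same pairing argument to show the residue must be zero. You will need to build the triangular chain explicitly before any of the duality bookkeeping in (1)--(4) closes.
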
 

\begin{proof} First of all, recall that if $n\in\mathcal{I}_{\Kol}^{(S)}(f)$, then $c_n(S)\in H^1\bigl(K,\overline{T}_{f,\p}^\dagger\bigr)^{\epsilon(f)\cdot(-1)^{\nu(n)+1}}$ (see, \emph{e.g.}, \cite[Proposition 3.14]{LV}). We divide our proof into four steps.

\texttt{Step $1$}. We claim that for $j\in\{0,\dots,\nu_S^\star\}$ there are prime numbers $\ell_1,\dots,\ell_{\nu_S^\star+j}$ such that 
\begin{itemize} 
\item for all $i=1,\dots,j+1$, we have $c_{n_i}(S)\neq 0$ for $n_i\defeq\ell_i\cdots\ell_{\nu_S^\star+i-1}$;
\item for all $i=1,\dots,j$, we have $\loc_{\ell_{\nu_S^\star+i}}\bigl(c_{n_i}(S)\bigr)\neq 0$.
\end{itemize} 
%Before showing, by induction on $j$, the existence of such primes, we observe that the classes $c_{n_i}(S)$ for $i=1,\dots,j$ satisfying the conditions above are linearly independent: this follows by localising at $\ell_{\nu_S^\star+i}$ for all $i=1,\dots,j$ and using the second and third conditions recursively. Now we prove the existence of primes satisfying these conditions. 
Suppose $j=0$. Our assumption on $\kappa_S^\star$ ensures that there exists $n_1=\ell_1\cdots\ell_{\nu_S^\star}$ such that $c_{n_1}(S)\neq0$, which shows the existence of primes $\ell_1,\dots,\ell_{\nu_S^\star}$ satisfying the conditions above (the second being void in this case). Suppose we have defined primes $\ell_1,\dots,\ell_{\nu_S^\star+j}$ for some $j\in\{0,\dots,\nu_S^\star-1\}$ satisfying the three conditions above. By Lemma \ref{MacCallum}, we can choose $c\neq0$ in $H^1\bigl(K,\overline{T}_{f,\p}^\dagger\bigr)^{-\epsilon_S}$ such that $c\in\Sel_{S,(\ell_{j+1}),[T_j]}\bigl(K,\overline{T}_{f,\p}^\dagger\bigr)^{-\epsilon_S}$ for $T_j\defeq\ell_{j+2}\cdots\ell_{\nu_S^\star+j}$ if $\nu_S^\star\geq 2$ and $T_j\defeq1$ otherwise. By the observation at the beginning of the proof, the classes $c_{n_{j+1}}(S)$ and $c$ belong to different eigenspaces for the action of $\Gal(K/\Q)$, so they are linearly independent. By Lemma \ref{MacCallum0}, we can find a prime $\ell_{\nu_S^\star+j+1}\in\mathcal{I}_{\Kol}^{(S)}(f)$, distinct from $\ell_1,\dots,\ell_{\nu_S^\star+j}$, such that 
\begin{itemize}
\item $\loc_{\ell_{\nu_S^\star+j+1}}\bigl(c_{n_{j+1}}(S)\bigr)\neq 0$;
\item $\loc_{\ell_{\nu_S^\star+j+1}}(c)\neq 0$. 
\end{itemize}
To complete \texttt{Step $1$}, we show that $c_{n_{j+2}}(S)\neq 0$ for $n_{j+2}\defeq\ell_{j+2}\cdots\ell_{\nu_S^\star+j+1}$. Notice that $c$ and $c_{n_{j+1}\ell_{\nu_S^\star+j+1}}(S)$ belong to the same eigenspace for the action of $\Gal(K/\Q)$ and that these two classes are orthogonal with respect to the local Tate pairing at all places except possibly at $\ell_{j+1}$ and $\ell_{\nu_S^\star+j+1}$. The localisations of both $c$ and $c_{n_{j+1}\ell_{\nu_S^\star+j+1}}(S)$ at  $\ell_{\nu_S^\star+j+1}$ are non-zero, the non-vanishing of the latter class stemming from Lemma \ref{McCallum1}. Thus, the local Tate pairing at $\ell_{\nu_S^\star+j+1}$ is non-zero by Lemma \ref{dimension 1}. It follows from global reciprocity that the local Tate pairing of $c$ and $c_{n_{j+1}\ell_{\nu_S^\star+j+1}}(S)$ is non-zero at $\ell_{j+1}$ as well, so $\loc_{\ell_{j+1}}(c)\neq 0$ and $\loc_{\ell_{j+1}}\bigl(c_{n_{j+1}\ell_{\nu_S^\star+j+1}}(S)\bigr)\neq0$. Then, again by Lemma \ref{McCallum1}, we have $\loc_{\ell_{j+1}}\bigl(c_{n_{j+2}}(S)\bigr)\neq 0$, whence $c_{n_{j+2}}(S)\neq 0$. This completes \texttt{Step $1$}. 

\texttt{Step $2$}. Pick a prime $\ell_{2\nu_S^\star+1}$ such that $\loc_{\ell_{2\nu_S^\star+1}}\bigl(c_{n_{\nu_{S^\star}+1}}(S)\bigr)\neq0$: such a prime exists by Lemma \ref{MacCallum0} because $c_{n_{\nu_S^\star+1}}(S)\neq 0$. Summing up, we have built a sequence of primes $\ell_1,\dots,\ell_{2\nu_S^\star+1}$ satisfying the following properties: 
\begin{itemize} 
\item the classes $c_{n_1}(S),\dots,c_{n_{\nu_S^\star+1}}(S)$ in $H^1\bigl(K,\overline{T}_{f,\p}^\dagger\bigr)$ are linearly independent;
\item for all $i=1,\dots,\nu_S^\star+1$ and all $j=i,\dots,\nu_S^\star+i-1$, we have $\loc_{\ell_j}\bigl(c_{n_i}(S)\bigr)=0$;
\item for all $i=1,\dots,\nu_S^\star+1$, we have $\loc_{\ell_{\nu_S^\star+i}}\bigl(c_{n_i}(S)\bigr)\neq0$,
\end{itemize}
Here the second condition follows from the fact that, by definition of $\nu_S^\star$, for each prime $\ell\,|\,n_i$ the class $c_{n_i/\ell}(S)$ is trivial, and so by Lemma \ref{McCallum1} the localization of $c_{n_i}(S)$ at $\ell$ is zero. In particular, the classes $c_{n_i}(S)$ belong to $\Sel_S\bigl(K,\overline{T}_{f,\p}^\dagger\bigr)^{\epsilon_S}$; moreover, they are linearly independent because, for each $i=1,\dots,\nu_S^\star +1$, we have $\loc_{\ell_{\nu_S^\star+i}}(c_{n_i})\neq 0$ and $\loc_{\ell_{\nu_S^\star+i}}(c_{n_j})=0$ for $j=i+1,\dots,\nu_S^\star+1$. 

\texttt{Step $3$}. We want to show that the classes $c_{n_i}(S)$ for $i=1,\dots,\nu_S^\star+1$ form bases of both $\Sel_S\bigl(K,\overline{T}_{f,\p}^\dagger\bigr)^{\epsilon_S}$ and $\Sel_{S,(\cB_S^\star)}\bigl(K,\overline{T}_{f,\p}^\dagger\bigr)^{\epsilon_S}$, thus showing (1) and (3) simultaneously. There is an inclusion $\Sel_S\bigl(K,\overline{T}_{f,\p}^\dagger\bigr)^{\epsilon_S}\subset\Sel_{S,(\cB_S^\star)}\bigl(K,\overline{T}_{f,\p}^\dagger\bigr)^{\epsilon_S}$, so in order to complete \texttt{Step $3$} it remains to prove that the above-mentioned classes generate $\Sel_{S,(\cB_S^\star)}\bigl(K,\overline{T}_{f,\p}^\dagger\bigr)^{\epsilon_S}$. Fix $c\in\Sel_{S,(\cB_S^\star)}\bigl(K,\overline{T}_{f,\p}^\dagger\bigr)^{\epsilon_S}$. At the cost of subtracting a linear combination of the classes $c_{n_i}(S)$, we can assume that $\loc_{\ell_{\nu_S^\star+j}}(c)=0$ for all $j=1,\dots,\nu_S^\star+1$. Therefore, it is enough to check that any such class is trivial. Let us set $n'\defeq\ell_{\nu_S^\star+1}\cdots\ell_{2\nu_S^\star+1}$. Then $c_{n'}(S)\neq0$ because its localisation at $\ell_{2\nu_S^\star+1}$ is non-zero: this follows from Lemma \ref{McCallum1} and the fact that $\loc_{\ell_{2\nu_S^\star+1}}\bigl(c_{n_{\nu_S^\star+1}}(S)\bigr)\neq0$. Moreover, note that $c_{n'}(S)$ and $c$ belong to different eigenspaces for the action of $\Gal(K/\Q)$. Suppose $c\neq0$ and pick a prime $\ell_{2\nu_S^\star+2}\notin\{\ell_1,\dots,\ell_{2\nu_S^\star+1}\}$ such that $\loc_{\ell_{2\nu_S^\star+2}}(c)\neq 0$ and $\loc_{\ell_{2\nu_S^\star+2}}\bigl(c_{n'}(S)\bigr)\neq 0$, which exists by Lemma \ref{MacCallum0}. Set $n''\defeq n'\ell_{2\nu_S^\star+2}$. By Lemma \ref{McCallum1}, we have $c_{n''}(S)\neq 0$ because $\loc_{\ell_{2\nu_S^\star+2}}\bigl(c_{n'}(S)\bigr)\neq0$. Furthermore, $c_{n''}(S)$ and $c$ lie in the same eigenspace for the action of $\Gal(K/\Q)$. Let us write the Tate pairing as 
\begin{equation} \label{tate-eq}
0=\sum_{v\in\cB_S^\star}\big\langle  c,c_{n''}(S)\big\rangle_v+\sum_{\ell\mid n''}\big\langle c,c_{n''}(S)\big\rangle_\ell. 
\end{equation}
By definition of the base locus, the first sum in \eqref{tate-eq} is zero. Furthermore, all local pairings at the primes $\ell_{\nu_S^\star+1},\dots,\ell_{2\nu_S^\star+1}$ are zero, so the right-hand sum in \eqref{tate-eq} reduces to the term $\big\langle c,c_{n''}(S)\big\rangle_{\ell_{2\nu_S^\star+2}}$, which is non-zero by Lemma \ref{dimension 1}, as the localisations of both classes are non-zero. This contradiction gives the vanishing of $c$, as was to be shown. 

\texttt{Step 4}. In light of the inclusion $\Sel_{S}\bigl(K/\overline{T}_{f,\p}^\dagger\bigr)^{-\epsilon_S}\subset\Sel_{S,(\cB_S^\star)}\bigl(K/\overline{T}_{f,\p}^\dagger\bigr)^{-\epsilon_S}$, in order to show (2) and (4) it suffices to show (4). Let us choose primes $\ell_{1},\dots,\ell_{2\nu_S^\star}$ as in \texttt{Step 1}. By contradiction, assume the inequality $\dim_{k_\p}\Bigl(\Sel_{S,(\cB_S^\star)}\bigl(K/\overline{T}_{f,\p}^\dagger\bigr)^{-\epsilon_S}\Bigr)\geq\nu_S^\star+1$. Pick $\nu_S^\star+1$ linearly independent elements $d_1,\dots,d_{\nu_S^\star+1}$ of $\Sel_{S,(\cB_S^\star)}\bigl(K/\overline{T}_{f,\p}^\dagger\bigr)^{-\epsilon_S}$ and for each $i\in\{1,\dots,\nu_S^\star+1\}$ consider the $\nu_S^\star$-tuple $\bigl(\loc_{\ell_{\nu_S^\star+1}}(d_i),\dots,\loc_{\ell_{2\nu_S^\star}}(d_i)\bigr)$. In light of Lemma \ref{dimension 1}, in this way we get $\nu_S^\star+1$ vectors in a $k_\p$-vector space of dimension $\nu_S^\star$, so there is a linear combination $d$ of them satisfying $\loc_{\ell_{\nu_S^\star+i}}(d)=0$ for all $i=1,\dots,\nu_S^\star$. It is enough to show that $d=0$. Suppose $d\neq 0$. The classes $d$ and $c_{n_{\nu_S^\star+1}}$ belong to different eigenspaces, so we can choose, by Lemma \ref{MacCallum0}, a prime $\ell_{2\nu_S^\star+1}$ (possibly different from the prime with the same name that was chosen in \texttt{Step 2}) such that $\loc_{\ell_{2\nu_S^\star+1}}(d)\neq 0$ and $\loc_{\ell_{2\nu_S^\star+1}}(c_{n_{\nu_S^\star+1}})\neq0$. Now set $n'\defeq\ell_{\nu_S^\star+1}\cdots\ell_{2\nu_S^\star+1}$. As above, $c_{n'}(S)\neq0$ because, by Lemma \ref{McCallum1} and the fact that $\loc_{\ell_{2\nu_S^\star+1}}\bigl(c_{n_{\nu_S^\star+1}}(S)\bigr)\neq 0$, its localisation at $\ell_{2\nu_S^\star+1}$ is non-zero. Moreover, as above, the Tate pairing reduces to $\big\langle d,c_{n'}(S)\big\rangle_{\ell_{2\nu_S^\star+1}}=0$; while, on the other hand, $\big\langle d,c_{n'}(S)\big\rangle_{\ell_{2\nu_S^\star+1}}\neq0$ by Lemma \ref{dimension 1}, as the localisations of both classes are non-zero. This contradiction proves the vanishing of $d$, as desired. \end{proof}

\subsection{Proof of Kolyvagin's conjecture} \label{sec KolConj}

Our goal now is to prove Kolyvagin's conjecture in its strong form (Conjecture \ref{strong-kolyvagin-conj}). As we shall see, in our arguments we use in a crucial way the explicit reciprocity laws from \S \ref{reciprocity-subsec} together with results by Skinner--Urban (\cite{SU}). 
 
We begin with a special case of the parity conjecture for Selmer groups. 

\begin{theorem}[Parity conjecture, odd case] \label{parity}
If $S\in\mathcal{P}^\mathrm{indef}$, then $\dim_{k_\p}\Bigl(\Sel_S\bigl(K,\overline{T}_{f,\p}^\dagger\bigr)\!\Bigr)$ is odd. 
\end{theorem}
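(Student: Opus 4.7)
The plan is to reduce, via the level-raising identification of Lemma \ref{lemma selmer}, to an application of Nekov\'a\v{r}'s parity theorem for the level-raised newform $f_S\defeq\JL^{-1}(\phi_S)$ over $K$.

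First, Lemma \ref{lemma selmer} (combined with the identifications in \eqref{galois-isom-eq}) gives an isomorphism
\[
\Sel_S\bigl(K,\overline{T}_{f,\p}^\dagger\bigr)\otimes_{k_\p}k_{\p_S}\simeq\Sel\bigl(K,A^\dagger_{\phi_S,\p_S}[\p_S]\bigr),
\]
and extension of scalars preserves dimensions; the parity we seek therefore coincides with that of $\dim_{k_{\p_S}}\Sel\bigl(K,A^\dagger_{\phi_S,\p_S}[\p_S]\bigr)$. Using the residual absolute irreducibility provided by Assumption \ref{ass}(2), this mod-$\p_S$ dimension is congruent modulo $2$ to the $\cO_{\p_S}$-corank of the characteristic-zero Bloch--Kato--Selmer group of $A^\dagger_{\phi_S,\p_S}$, the correction term being twice the $k_{\p_S}$-dimension of $\Sha_{\BKK}[\p_S]$, which is even because the Cassels--Tate pairing is alternating on a self-dual representation of weight zero.

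Next, we apply Nekov\'a\v{r}'s parity theorem for ordinary modular forms (see, e.g., his work on Selmer complexes) to the pair $(f_S,K)$: the $\cO_{\p_S}$-corank of the Bloch--Kato--Selmer group has the same parity as $\bigl(1-\varepsilon(f_S/K)\bigr)/2$, where $\varepsilon(f_S/K)\in\{\pm1\}$ is the global root number of $L(f_S/K,s)$ at the central point $s=k/2$. A classical computation of local $\varepsilon$-factors (\emph{cf.} the analogous discussion in \cite{zhang-selmer} in weight two) shows that, since $f_S$ has level $N^+(N^-S)$ with $N^+$ a product of primes split in $K$ and $N^-S$ a product of primes inert in $K$, and since $S\in\mathcal{P}^\mathrm{indef}$ forces $N^-S$ to be a product of an \emph{even} number of primes (the \emph{generalised Heegner hypothesis}), one has $\varepsilon(f_S/K)=-1$. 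The asserted oddness follows.

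The main obstacle is the precise matching between the local conditions defining our Selmer group $\Sel_S$ (the \emph{ordinary} conditions at primes dividing $DS$ introduced in \S\ref{local D} and \S\ref{local S}) and the Bloch--Kato conditions to which Nekov\'a\v{r}'s parity theorem directly applies. The admissibility of primes in $S$ (in particular $p\nmid\ell^2-1$), together with Assumption \ref{ass} and the fact that $\phi_S$ remains $p$-ordinary after level-raising, ensures that these local cohomology groups either coincide or differ only by parity-neutral modifications; this verification is a routine check on a case-by-case basis at the primes dividing $N^-S$.
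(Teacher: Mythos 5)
Your proposal takes a genuinely different route from the paper's argument, and the comparison is instructive. The paper proves Theorem~\ref{parity} entirely at the mod-$\p$ level: it first establishes that $\Sel_T\bigl(K,\overline{T}_{f,\p}^\dagger\bigr)\neq 0$ for \emph{every} $T\in\mathcal{P}^\mathrm{indef}$ (via the vanishing $L(f_T/K,1)=0$, the Skinner--Urban theorem, and Lemma~\ref{lemma selmer}), and then argues by contradiction: if the dimension $r$ were even, one could pick admissible primes $\ell_1,\dots,\ell_r$, each dropping the Selmer dimension by one (Lemma~\ref{lemma7.2}), landing on $T=S\ell_1\cdots\ell_r\in\mathcal{P}^\mathrm{indef}$ with $\Sel_T=0$, a contradiction. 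Parity is thus a \emph{consequence} of non-vanishing plus descent, with no appeal to any pre-existing parity theorem. Your approach instead invokes Nekov\'a\v{r}'s parity theorem for the level-raised form $f_S$ after transporting the problem through Lemma~\ref{lemma selmer}. In principle this is a viable shortcut, and it would explain why the root number is $-1$, but it carries more hidden overhead.

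There are two concrete gaps. First, you explicitly defer the identification of the \emph{ordinary} local conditions of \S\ref{local D}--\S\ref{local S} at primes dividing $N^-S$ with the Bloch--Kato conditions for $f_S$ as ``a routine check''; it is not. At primes where the level-raised representation is Steinberg, the ordinary and Bloch--Kato local subspaces can genuinely differ, and one must track how much they disagree and whether the discrepancy is parity-neutral. This is precisely the kind of local bookkeeping the paper's strategy avoids by never leaving the $\Sel_S$-formalism. Second, your passage from the $\cO_{\p_S}$-corank to the $k_{\p_S}$-dimension of the residual Selmer group rests on the claim that $\dim_{k_{\p_S}}\Sha^{\BKK}[\p_S]$ is even because ``the Cassels--Tate pairing is alternating on a self-dual representation of weight zero.'' The representation here has motivic weight $-1$, not zero, and the correct justification (for $p$ odd) is the \emph{skew-symmetry} of the pairing induced by the skew-symmetric self-duality of $T^\dagger_{f,\p}$; this is plausible but not standard off the shelf and deserves a reference or proof. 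You would also need residual irreducibility (which Assumption~\ref{ass} does supply) to identify $\Sel\bigl(K,A^\dagger[\p_S]\bigr)$ with $\Sel\bigl(K,A^\dagger\bigr)[\p_S]$ before any such torsion analysis applies. In short: the paper's non-vanishing-plus-descent proof is self-contained in the quaternionic Selmer framework, whereas your argument would be correct only after a careful local-conditions comparison and a more precise treatment of the Cassels--Tate step, neither of which you carry out.
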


\begin{proof} First of all, we observe that if $T\in\mathcal{P}^\mathrm{indef}$, then $\Sel_T\bigl(K,\overline{T}_{f,\p}^\dagger\bigr)\neq 0$. To show this, we consider the Galois representation $V_{f_T,\p_T}^\dagger$ attached to the eigenform $f_T$ from Notation \ref{JL-notation}; let $T_{f_T,\p_T}^\dagger\subset V_{f_T,\p_T}^\dagger$ be a Galois-stable lattice and let $A_{f_T,\p_T}^\dagger\defeq V_{f_T,\p_T}^\dagger\big/T_{f_T,\p_T}^\dagger$, as before. The functional equation satisfied by the complex $L$-function $L(f_T/K,s)$ gives $L(f_T/K,1)=0$. A result due to Skinner--Urban (\cite{SU}; \emph{cf.} also \cite[Theorem 4.6]{wang} for a more precise statement) shows that $\Sel\bigl(K,A_{f_T,\p_T}\bigr)\neq0$, hence $\Sel_T\bigl(K,\overline{T}_{f,\p}^\dagger\bigr)\neq 0$ by Lemma \ref{lemma selmer}. 

Arguing by contradiction, let us suppose now that $\dim_{k_\p}\Bigl(\Sel_S\bigl(K,\overline{T}_{f,\p}^\dagger\bigr)\!\Bigr)$ is even. By the observation above applied to $T=S$, we may assume that $r\defeq\dim_{k_\p}\Bigl(\Sel_S\bigl(K,\overline{T}_{f,\p}^\dagger\bigr)\!\Bigr)\geq2$. Thanks to \v{C}ebotarev's density theorem, we can choose $x_1\in\Sel_S\bigl(K,\overline{T}_{f,\p}^\dagger\bigr)\smallsetminus\{0\}$ and an admissible prime $\ell_1$ as in Theorem \ref{levelraising} relative to $f_S$ such that $\loc_{\ell_1}(x_1)\neq 0$. Therefore, localisation induces a surjection $\loc_{\ell_1}:\Sel_{S}\bigl(K,\overline{T}_{f,\p}^\dagger\bigr)\twoheadrightarrow H^1_{\mathrm{fin}}\bigl(K_{\ell_1},\overline{T}_{f,\p}^\dagger\bigr)$ and, by Lemma \ref{lemma7.2}, there is an equality 
\[ \dim_{k_\p}\Bigl(\Sel_{{S\ell_1}}\bigl(K,\overline{T}_{f,\p}^\dagger\bigr)\!\Bigr)= r-1\geq 1. \]
As before, by \v{C}ebotarev's density theorem we can choose $x_2\in \Sel_{{S\ell_1}}\bigl(K,\overline{T}_{f,\p}^\dagger\bigr)\smallsetminus\{0\}$ and an admissible prime $\ell_2$ such that $\loc_{\ell_2}(x_2)\neq0$. Thus, $\loc_{\ell_2}:\Sel_{{S\ell_1}}\bigl(K,\overline{T}_{f,\p}^\dagger\bigr)\twoheadrightarrow H^1_\mathrm{fin}\bigl(K_{\ell_2},\overline{T}_{f,\p}^\dagger\bigr)$ is surjective and, by Lemma \ref{lemma7.2}, one has
\[ \dim_{k_\p}\Bigl(\Sel_{{S\ell_1\ell_2}}\bigl(K,\overline{T}_{f,\p}^\dagger\bigr)\!\Bigr)=\dim_{k_\p}\Bigl(\Sel_{{S\ell_1}}\bigl(K,\overline{T}_{f,\p}^\dagger\bigr)\!\Bigr)-1= r-2. \] 
Repeating this process, we find $T=S\ell_1\cdots\ell_{2r}$ such that $\dim_{k_\p}\Bigl(\Sel_{{T}}\bigl(K,\overline{T}_{f,\p}^\dagger\bigr)\!\Bigr)=0$, which contradicts the observation at the beginning of the proof. \end{proof}

%
%\begin{lemma}\label{lemma7.2} \begin{enumerate}
%\item $\Sel^\pm_{{S\ell}}(K,\overline{T}_{f,\p}^\dagger)=\ker\left(\loc_\ell:\Sel^\pm_{{S}}(K,\overline{T}_{f,\p}^\dagger)\rightarrow H^1(K_\ell,\overline{T}_{f,\p}^\dagger)\right)$.
%\item$\dim_{\F_p}\left(\Sel^\pm_{{S\ell}}(K,\overline{T}_{f,\p}^\dagger)\right)\geq \dim_{\F_p}\left(\Sel^\pm_{{S}}(K,\overline{T}_{f,\p}^\dagger)\right)-1$. 
%%\item Suppose that $\loc_\ell:\Sel_{\p_{S\ell}}(A_{S\ell}/K)\rightarrow H^1(K,T_{\bar{f}})$ is surjective. 
%\end{enumerate}
%\end{lemma}
The next result shows that Kolyvagin's conjecture holds true when the Selmer rank is $1$.

\begin{theorem}[Wang] \label{rank 0}
Let $S\in\mathcal{P}^\mathrm{indef}$ and assume that $\dim_{k_\p}\Bigl(\Sel_{S}\bigl(K,\overline{T}_{f,\p}^\dagger\bigr)\!\Bigr)=1$. The class $c_1(S)\in H^1\bigl(K,\overline{T}_{f,\p}^\dagger\bigr)$ is not zero; in particular, $\kappa_S^\star\neq\{0\}$.   
\end{theorem}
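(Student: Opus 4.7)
The plan is to reduce the statement, by a level-raising comparison of Selmer groups, to Wang's rank~$1$ indivisibility theorem for Heegner cycles on Kuga--Sato varieties over Shimura curves.

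First I would translate the hypothesis. Applying Lemma \ref{lemma selmer} to the form $\phi_S\in\mathcal S_k(N^+,N^-S;\cO_{S,\p_S})$ produced by Theorem \ref{levelraising} gives
\[ \dim_{k_{\p_S}}\Sel\bigl(K,A^\dagger_{\phi_S,\p_S}[\p_S]\bigr)=\dim_{k_\p}\Sel_S\bigl(K,\overline{T}^\dagger_{f,\p}\bigr)=1. \]
Since $S\in\mathcal P^{\mathrm{indef}}$, the integer $N^-S$ has an even number of prime factors, so the quaternion algebra of discriminant $N^-S$ is indefinite and the Heegner cycle $y_{1,N^+,N^-S}(\phi_S)\in \Lambda_{\phi_S,\p_S}(H_K)$ from \S\ref{HC} is genuinely defined via the Kuga--Sato variety over $X_0(N^+,N^-S)$.

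Next I would unwind $c_1(S)$ at the level of definitions. For $n=1$ we have $D_1=\mathrm{id}$, $G_1=\{1\}$, and $\mathcal G_1=\mathrm{Gal}(H_K/K)$, so
\[ z_1(S)=\sum_{\sigma\in\mathrm{Gal}(H_K/K)}\sigma\bigl(y_{1,N^+,N^-S}(\phi_S)\bigr), \]
which is $\mathrm{Gal}(H_K/K)$-invariant and thus descends to a class in $\Lambda_{\phi_S,\p_S}(K)$ by \eqref{res-iso-eq}. Reducing modulo $\p_S$ and applying $\iota_{\phi_S,K,1}$ together with the identifications in \eqref{galois-isom-eq} produces $c_1(S)\in H^1\bigl(K,\overline{T}^\dagger_{f,\p}\bigr)$, which by Remark \ref{selmer-rem} lies in $\Sel_S\bigl(K,\overline{T}^\dagger_{f,\p}\bigr)$.

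The heart of the argument is the invocation of Wang's theorem. Under Assumption \ref{ass}, which is inherited by $\phi_S$ via the proof sketch of Theorem \ref{levelraising} (the admissibility of the primes in $S$ ensures conditions (3)--(5) are preserved at the new ramified primes, while ordinarity at $p$ and residual irreducibility are untouched by the level-raising congruence), the results of \cite{wang} assert that when $\dim_{k_{\p_S}}\Sel\bigl(K,A^\dagger_{\phi_S,\p_S}[\p_S]\bigr)=1$, the trace to $K$ of the Heegner cycle on the Kuga--Sato variety over $X_0(N^+,N^-S)$ is $\p_S$-indivisible, i.e.\ non-zero in that Selmer group. Transporting this non-vanishing back through Lemma \ref{lemma selmer} and the identifications above yields $c_1(S)\neq 0$, and in particular $\kappa_S^\star\neq\{0\}$ since $c_1(S)\in\kappa_S^\star$ by construction.

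The main obstacle is not conceptual but bookkeeping: one must carefully verify that the normalisations of Heegner cycles, the Hecke-isotypic projectors and the integral lattice used to define $c_1(S)$ agree (up to $p$-adic units) with those of the cycle considered by Wang, and that the residual Galois representation at which Wang's rank-$1$ input is applied is indeed $\overline{T}^\dagger_{\phi_S,\p_S}\simeq\overline{T}^\dagger_{f,\p}\otimes_{k_\p}k_{\p_S}$. All the genuinely deep geometric content --- the $p$-indivisibility of Heegner cycles in Selmer rank $1$ --- is imported from \cite{wang} and is not re-proved here.
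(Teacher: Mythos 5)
Your proposal is correct and matches the paper's approach at the top level: the paper attributes the statement to \cite[Theorem~2]{wang} and the reduction from $\Sel_S\bigl(K,\overline{T}_{f,\p}^\dagger\bigr)$ to the Selmer group of the level-raised form $\phi_S$ via Lemma~\ref{lemma selmer} is exactly the translation you describe. The paper then reviews Wang's argument in three steps (choosing an auxiliary admissible prime $\ell$ so that $\Sel_{S\ell}$ vanishes, deducing that the associated theta element is a $p$-adic unit via the anticyclotomic main conjecture and the Chida--Hsieh formula, and concluding via the second reciprocity law), but since the result is imported wholesale from \cite{wang}, invoking it as a black box as you do is a valid alternative.
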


\begin{proof}  This is \cite[Theorem 2]{wang}; for the reader's convenience, we review the proof from \cite{wang}. 

\texttt{Step 1}. Pick $x\in\Sel_{S}\bigl(K,\overline{T}_{f,\p}^\dagger\bigr)\smallsetminus\{0\}$, so that $\{x\}$ is a basis of $\Sel_{S}\bigl(K,\overline{T}_{f,\p}^\dagger\bigr)$ over $k_\p$. By the \v{C}ebotarev density theorem, we can choose an admissible prime $\ell$ relative to $(f,K)$ such that $\loc_\ell(x)\neq 0$. It follows that localisation induces a surjection 
\[ \loc_\ell:\Sel_{S}\bigl(K,\overline{T}_{f,\p}^\dagger\bigr)\longepi H^1_\fin\bigl(K_\ell,\overline{T}_{f,\p}^\dagger\bigr), \] 
and then we deduce from Lemma \ref{lemma7.2} that $\Sel_{{S\ell}}\bigl(K,\overline{T}_{f,\p}^\dagger\bigr)=0$. 

\texttt{Step 2}. We show that $\lambda_f(S\ell)\defeq\lambda_{f,1}(S\ell)$ is a $p$-adic unit. As before, fix a Galois-stable lattice $T_{f_{S\ell},\p_{S\ell}}^\dagger$ inside $V_{f_{S\ell},\p_{S\ell}}^\dagger$ and set $A_{f_{S\ell},\p_{S\ell}}^\dagger\defeq V_{f_{S\ell},\p_{S\ell}}^\dagger\big/T_{f_{S\ell},\p_{S\ell}}^\dagger$. Let $L^\mathrm{alg}({f}_{S\ell}/K)$ be the algebraic part of the special 
value of $L(f_{S\ell}/K,s)$ at $s=1$, normalised as in \cite{wang} using Hida's canonical period attached to $f_{S\ell}$. By \cite[Theorem 4.5]{wang}, we know that 
\[ \ord_{\p_{S\ell}}\Bigl(L^\mathrm{alg}(f_{S\ell}/K)\!\Bigr)=\length_{\cO_{S\ell}}\Bigl(\Sel\bigl(K,A^\dagger_{{f}_{S\ell},\p_{S\ell}}\bigr)\!\Bigr)+\sum_{q\mid NS\ell}t_{{f}_{S\ell}}(q), \]
where $t_{{f}_{S\ell}}(q)$ is the Tamagawa exponent of $A_{{f}_{S\ell},\wp_{S\ell}}^\dagger$ at $q$.
%Under our assumptions, for all $q\mid N^+$ we have $t_{\tilde{f}_S}(q)=0$. 
%Moreover, 
Since $A_{f_{S\ell},\p_{S\ell}}^\dagger[\p_{S\ell}]$ is an irreducible $G_K$-module, there is an isomorphism $\Sel_{{S\ell}}\bigl(K,A_{f,\p}^\dagger[\p_{S\ell}]\bigr)\simeq\Sel_{{S\ell}}\bigl(K,A_{f,\p}^\dagger\bigr)[\p_{S\ell}]$; therefore, we conclude from Lemma \ref{lemma selmer} and \texttt{Step 1} that 
\begin{equation} \label{ord-eq1}
\ord_{\p_{S\ell}}\Bigl(L^\mathrm{alg}(f_{S\ell}/K)\!\Bigr)=\sum_{q\mid NS\ell}t_{{f}_{S\ell}}(q). 
\end{equation} 
Thanks to a formula by Chida--Hsieh, generalised by Wang in \cite[Theorem 3.1]{wang}, there is an equality
\begin{equation} \label{ord-eq2}
\ord_{\p_{S\ell}}\Bigl(L^\mathrm{alg}({{f}_{S\ell}}/K)\!\Bigr)=2\cdot\ord_{\p_{S\ell}}\bigl(\theta_{1,N^+,N^-S\ell}({f_{S\ell}})\bigr)+\sum_{q\mid NS\ell}t_{{f}_{S\ell}}(q). 
\end{equation}
Combining \eqref{ord-eq1} and \eqref{ord-eq2}, we get $\lambda_f(S\ell)=\theta_{1,N^+,N^-S\ell}({f_{S\ell}})\in\cO_{S\ell,\p_{S\ell}}^\times$.
%Since $\theta_{1,N^+,N^-S\ell}({f_{S\ell}})\equiv\lambda_{f}(S\ell)\pmod{\p_{S\ell}}$, it follows that $\lambda_f(S\ell)$ is a $p$-adic unit. 

\texttt{Step 3}. Since, by the previous step, $\lambda_f(S\ell)$ is a $p$-adic unit, Theorem \ref{ERL2} implies that $\loc_\ell\bigl(c_1(S)\bigr)\neq0$, whence $c_1(S)\not=0$. \end{proof}

%Define
%\[M(\ell)=\min\{\val_\p(\ell+1),\val_\p(f(T_\ell))\}>0.\] Write $\Lambda_\mathrm{Kol}$ for the set of 
%square-free products of Kolyvagin primes; if we need to stress the dependence on $(f,K)$, we write $\Lambda_\mathrm{Kol}(f,K)$ for $\Lambda_\mathrm{Kol}$. We also require that $1\in\Lambda_\mathrm{Kol}$. 
%For $n\in\Lambda_\mathrm{Kol}$ let
%\[M(n)=\min\{M(\ell):\ell\mid n\}\] if $n>1$, and set $M(1)=\infty$. 
%Let $\Lambda_\mathrm{Kol}^{(r)}$ be the subset of $\Lambda_\mathrm{Kol}$ consisting of square-free products of exactly $r$ Kolyvagin primes. Define 
%\[M_r=\min\{M(n):n\in\Lambda_\mathrm{Kol}^{(r)}\}.\]

Now we can prove the main result of this paper (Theorem A in the introduction).

\begin{theorem}[Kolyvagin's conjecture, strong form] \label{kol-thm}
If $S\in\mathcal{P}^\mathrm{indef}$, then $\kappa_S^\star\neq\{0\}$.  
\end{theorem}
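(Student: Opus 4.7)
The plan is to proceed by induction on the non-negative integer
\[ r \defeq \bigl(\dim_{k_\p}\Sel_S(K,\overline{T}_{f,\p}^\dagger) - 1\bigr)/2, \]
which is well-defined thanks to the parity result (Theorem \ref{parity}). The base case $r=0$ is precisely Theorem \ref{rank 0}: when the Selmer group is one-dimensional over $k_\p$, Wang's theorem gives $c_1(S) \neq 0$, and hence $\kappa_S^\star \neq \{0\}$.

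For the inductive step, assume $r \geq 1$ and that the theorem holds for every $T \in \mathcal{P}^{\mathrm{indef}}$ with $\dim_{k_\p}\Sel_T(K,\overline{T}_{f,\p}^\dagger) < 2r+1$. I would first reduce the Selmer dimension by two via level raising: picking a non-zero $x_1 \in \Sel_S(K,\overline{T}_{f,\p}^\dagger)$, Lemma \ref{MacCallum0} produces an admissible prime $\ell_1 \nmid S$ with $\loc_{\ell_1}(x_1)\neq 0$. Since $H^1_\fin(K_{\ell_1},\overline{T}_{f,\p}^\dagger)$ is one-dimensional over $k_\p$ by Lemma \ref{dimension 1}, the localisation $\Sel_S \to H^1_\fin(K_{\ell_1},\overline{T}_{f,\p}^\dagger)$ is surjective, and Lemma \ref{lemma7.2}(3) yields $\dim_{k_\p}\Sel_{S\ell_1} = 2r$. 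Repeating the argument inside $\Sel_{S\ell_1}$ produces a second admissible prime $\ell_2$ with $\dim_{k_\p}\Sel_{S\ell_1\ell_2} = 2r-1$. Adding two admissible primes preserves the sign of $\mu(N^-S)$, so $S\ell_1\ell_2 \in \mathcal{P}^{\mathrm{indef}}$, and the inductive hypothesis delivers a non-zero Kolyvagin class $c_n(S\ell_1\ell_2)$ for some $n \in \mathcal{I}^{(S\ell_1\ell_2)}_{\Kol}(f) \subset \mathcal{I}^{(S)}_{\Kol}(f)$, the inclusion being Remark \ref{kol-rem}.

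The final step will be to transfer this non-vanishing from level $S\ell_1\ell_2$ back to level $S$. My plan is to invoke the Kolyvagin-derivative upgrade of Corollary \ref{coro2.5}: applying the operator $D_n$ to both sides of Theorems \ref{ERL1} and \ref{ERL2} and using the compatibility of $D_n$ with localisations at primes coprime to $n$ and with the finite/singular splitting \eqref{splitting-eq}, one obtains
\[ v_{\ell_1}\bigl(c_n(S)\bigr) \neq 0 \;\Longleftrightarrow\; \partial_{\ell_2}\bigl(c_n(S\ell_1\ell_2)\bigr) \neq 0 \]
in the one-dimensional $k_\p$-lines $H^1_\fin(K_{\ell_1},\overline{T}_{f,\p}^\dagger)$ and $H^1_\sing(K_{\ell_2},\overline{T}_{f,\p}^\dagger)$. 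If $\partial_{\ell_2}(c_n(S\ell_1\ell_2)) \neq 0$, we conclude $\loc_{\ell_1}(c_n(S)) \neq 0$, hence $c_n(S)\neq 0$ and we are done. Otherwise, a further \v{C}ebotarev application of Lemma \ref{MacCallum0} to the non-zero class $c_n(S\ell_1\ell_2)$ supplies a Kolyvagin prime $\ell^\star \nmid n\ell_1\ell_2$ with $\loc_{\ell^\star}(c_n(S\ell_1\ell_2))\neq 0$; Lemma \ref{McCallum1} then gives $\partial_{\ell^\star}(c_{n\ell^\star}(S\ell_1\ell_2))\neq 0$, and iterating the reciprocity equivalence with this new prime produces a non-vanishing element of $\kappa_S^\star$, completing the induction.

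The principal obstacle will be the derivative-class reciprocity required in the transfer step: Theorems \ref{ERL1} and \ref{ERL2} are formulated for the trace classes $\kappa_{\phi,c}$, so the induction requires verifying that the corresponding congruences descend, after application of $D_n$, to the Kolyvagin-derived classes modulo $\p$. In the weight-$2$ setting this is carried out in \cite[\S 8]{zhang-selmer}; the analogous derivation in our higher-weight framework, though technical, proceeds through the Galois-equivariance of the $\wp$-adic Abel--Jacobi maps \eqref{AJ1} and the compatibility of $D_n$ with the Shapiro-type isomorphism \eqref{res-iso-eq}, reducing the matter to the explicit level-raising congruences already available in \cite{wang}.
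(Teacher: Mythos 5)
Your setup — induction on the Selmer rank, reducing the dimension by $2$ through two admissible primes $\ell_1,\ell_2$, invoking the inductive hypothesis at $S\ell_1\ell_2$, and then attempting to transfer the non-vanishing back to level $S$ via the two reciprocity laws — matches the paper's strategy, as does the identification of the base case with Wang's theorem. However, the transfer step contains a genuine gap, and it is precisely the place where the paper relies on Proposition \ref{prop masoero} (triangulation of Selmer groups), which your argument never uses.

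The problem is the case in which $\partial_{\ell_2}\bigl(c_n(S\ell_1\ell_2)\bigr)=0$. Corollary \ref{coro2.5} (and its extension to the derived classes) links level $S$ to level $S\ell_1\ell_2$ exclusively through the two admissible primes $\ell_1$ and $\ell_2$: what you need in order to conclude is some $m$ with $\loc_{\ell_2}\bigl(c_m(S\ell_1\ell_2)\bigr)\neq 0$, i.e.\ that $\ell_2$ is \emph{not} a base point of $\kappa_{S\ell_1\ell_2}^\star$. Your fallback replaces $\ell_2$ by a Kolyvagin prime $\ell^\star$ for which $\loc_{\ell^\star}\bigl(c_n(S\ell_1\ell_2)\bigr)\neq 0$, obtains $\partial_{\ell^\star}\bigl(c_{n\ell^\star}(S\ell_1\ell_2)\bigr)\neq 0$ via Lemma \ref{McCallum1}, and then claims that ``iterating the reciprocity equivalence with this new prime'' yields a non-vanishing class in $\kappa_S^\star$. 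This does not work. First, Kolyvagin primes (Definition \ref{defKol}, which forces $p\,|\,\ell+1$) and admissible primes (Definition \ref{admissible-def}, which forces $p\nmid\ell^2-1$) are disjoint sets, so $\ell^\star$ cannot serve as a level-raising prime in the reciprocity laws. Second, and more to the point, control over $\partial_{\ell^\star}$ or $v_{\ell^\star}$ gives you nothing about the residue at the fixed prime $\ell_2$: the Kolyvagin relations of Lemma \ref{McCallum1} only control the local behaviour of $c_{n\ell^\star}$ at $\ell^\star$ itself. If $\ell_2$ belongs to the base locus $\cB_{S\ell_1\ell_2}^\star$, then $\loc_{\ell_2}\bigl(c_m(S\ell_1\ell_2)\bigr)=0$ for \emph{every} Kolyvagin integer $m$, and no amount of enlarging $n$ to $n\ell^\star$ will change that. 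In short, you have no mechanism that forces the localisation at $\ell_2$ to become non-zero.

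This is exactly the obstruction the paper overcomes by arguing by contradiction: assuming $\ell_2\in\cB_{S\ell_1\ell_2}^\star$, the inclusion $\Sel_{S\ell_1}\bigl(K,\overline{T}_{f,\p}^\dagger\bigr)^\pm\subset\Sel_{S\ell_1\ell_2,(\cB_{S\ell_1\ell_2}^\star)}\bigl(K,\overline{T}_{f,\p}^\dagger\bigr)^\pm$ combined with the dimension bounds of Proposition \ref{prop masoero} (applied at level $S\ell_1\ell_2$, where $\kappa^\star\neq\{0\}$ by the inductive hypothesis) leads to a contradiction with the fact that $\loc_{\ell_2}(x_2)\neq 0$. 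This is also why the paper works throughout inside a fixed eigenspace $\epsilon$ for the $\Gal(K/\Q)$-action: the dimension inequalities of Proposition \ref{prop masoero} are eigenspace-specific, and the two subcases $\epsilon=\epsilon_{S\ell_1\ell_2}$ versus $-\epsilon=\epsilon_{S\ell_1\ell_2}$ require different chains of inequalities. Your proposal omits both the triangulation proposition and the eigenspace bookkeeping, and it is precisely those two ingredients that make the final transfer step go through. Without them, the induction does not close.
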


\begin{proof} As in \cite{zhang-selmer}, we proceed by induction on $r\defeq\dim_{k_\p}\Bigl(\Sel_{S}\bigl(K,\overline{T}_{f,\p}^\dagger\bigr)\!\Bigr)$. Thanks to Theorem \ref{parity}, we know that $r$ is odd. Pick $\epsilon\in\{+,-\}$ such that 
\begin{equation} \label{eq7.1}
\dim_{k_\p}\Bigl(\Sel_{S}\bigl(K,\overline{T}_{f,\p}^\dagger\bigr)^\epsilon\Bigr)>\dim_{k_\p }\Bigl(\Sel_{S}\bigl(K,\overline{T}_{f,\p}^\dagger\bigr)^{-\epsilon}\Bigr).
\end{equation}
When $r=1$, Theorem \ref{rank 0} ensures that $\kappa_S^\star\neq\{0\}$, so we may assume $r\geq3$ and that $\kappa_U^\star\neq\{0\}$ for every $U\in\mathcal P^\mathrm{indef}$ such that $\dim_{k_\p}\Bigl(\Sel_U\bigl(K,\overline{T}_{f,\p}^\dagger\bigr)\!\Bigr)=r-2$. The first part of the proof is similar to the proof of Theorem \ref{rank 0}. Choose $x_1\in\Sel_{S}\bigl(K,\overline{T}_{f,\p}^\dagger\bigr)^\epsilon\smallsetminus\{0\}$ and an admissible prime $\ell_1$ relative to $f_S$ such that $\loc_{\ell_1}(x_1)\neq 0$; then $\loc_{\ell_1}:\Sel_{S}\bigl(K,\overline{T}_{f,\p}^\dagger\bigr)^\epsilon\rightarrow H^1_\fin\bigl(K_{\ell_1},\overline{T}_{f,\p}^\dagger\bigr)$ is surjective, and Lemma \ref{lemma7.2} ensures that
\begin{equation}\label{eq-rank1}
\dim_{k_\p }\Bigl(\Sel_{{S\ell_1}}\bigl(K,\overline{T}_{f,\p}^\dagger\bigr)\!\Bigr)= r-1\geq 2.
\end{equation}
Since $r\geq3$, and hence $\dim_{k_\p }\Bigl(\Sel_{S}\bigl(K,\overline{T}_{f,\p}^\dagger\bigr)^\epsilon\Bigr)\geq2$ by \eqref{eq7.1}, Lemma \ref{lemma7.2} also shows that 
\begin{equation}\label{eq7.2}
\dim_{k_\p }\Bigl(\Sel_{{S\ell_1}}\bigl(K,\overline{T}_{f,\p}^\dagger\bigr)^\epsilon\Bigr)=\dim_{k_\p }\Bigl(\Sel_{S}\bigl(K,\overline{T}_{f,\p}^\dagger\bigr)^\epsilon\Bigr)-1\geq1
\end{equation}
and so, using \eqref{eq-rank1}, we get
\begin{equation}\label{eq7.3}
\dim_{k_\p }\Bigl(\Sel_{{S\ell_1}}\bigl(K,\overline{T}_{f,\p}^\dagger\bigr)^{-\epsilon}\Bigr)=\dim_{k_\p }\Bigl(\Sel_{S}\bigl(K,\overline{T}_{f,\p}^\dagger\bigr)^{-\epsilon}\Bigr).
\end{equation}
By \eqref{eq7.2}, we can choose $x_2\in\Sel_{S\ell_1}\bigl(K,\overline{T}_{f,\p}^\dagger\bigr)^\epsilon\smallsetminus\{0\}$ and an admissible prime $\ell_2$ relative to $f_{S\ell_1}$ such that $\loc_{\ell_2}(x_2)\not=0$; then, by Lemma \ref{lemma7.2}, we have
\[ \dim_{k_\p }\Bigl(\Sel_{{S\ell_1\ell_2}}\bigl(K,\overline{T}_{f,\p}^\dagger\bigr)\!\Bigr)=\dim_{k_\p }\Bigl(\Sel_{{S\ell_1}}\bigl(K,\overline{T}_{f,\p}^\dagger\bigr)\!\Bigr)-1= r-2. \] 
Now $S\ell_1\ell_2\in\mathcal{P}^\mathrm{indef}$, so by the inductive hypothesis we conclude that $\kappa_{S\ell_1\ell_2}^\star\neq \{0\}$. By Corollary \ref{coro2.5}, and keeping Remark \ref{kol-rem} in mind, for all $n\in\mathcal I_{\Kol}^{(S\ell_1\ell_2)}(f)$ we have
\[ \loc_{\ell_1}\bigl(c_n(S)\bigr)\neq0\;\Longleftrightarrow\;\loc_{\ell_2}\bigl(c_n(S\ell_1\ell_2)\bigr)\neq0. \] 
Thus, in order to prove that $c_n(S)\neq0$ it is enough to show that $\ell_2$ is not a base point for $\kappa_{S\ell_1\ell_2}^\star$. Suppose, by contradiction, that $\ell_2$ is a base point for $\kappa_{S\ell_1\ell_2}^\star$. There is an inclusion 
\begin{equation} \label{inclusion}
\Sel_{{S\ell_1}}\bigl(K,\overline{T}_{f,\p}^\dagger\bigr)^\pm\subset\Sel_{{S\ell_1\ell_2},(\cB_{S\ell_1\ell_2})}\bigl(K,\overline{T}_{f,\p}^\dagger\bigr)^\pm.
\end{equation}
Since $\dim_{k_\p }\Bigl(\Sel_{{S\ell_1\ell_2}}\bigl(K,\overline{T}_{f,\p}^\dagger\bigr)\!\Bigr)$ is again odd, it follows that 
\[ \dim_{k_\p }\Bigl(\Sel_{{S\ell_1\ell_2}}\bigl(K,\overline{T}_{f,\p}^\dagger\bigr)^\epsilon\Bigr)\not=\dim_{k_\p }\Bigl(\Sel_{{S\ell_1\ell_2}}\bigl(K,\overline{T}_{f,\p}^\dagger\bigr)^{-\epsilon}\Bigr). \]
First suppose $\dim_{k_\p }\Bigl(\Sel_{{S\ell_1\ell_2}}\bigl(K,\overline{T}_{f,\p}^\dagger\bigr)^\epsilon\Bigr)>\dim_{k_\p }\Bigl(\Sel_{{S\ell_1\ell_2}}\bigl(K,\overline{T}_{f,\p}^\dagger\bigr)^{-\epsilon}\Bigr)$. By Proposition \ref{prop masoero} and the non-triviality of $\kappa_{S\ell_1\ell_2}^\star$, we have $\epsilon=\epsilon_{S\ell_1\ell_2}$. Then Proposition \ref{prop masoero} gives 
\[ \Sel_{{S\ell_1\ell_2}}\bigl(K,\overline{T}_{f,\p}^\dagger\bigr)^\epsilon=\Sel_{{S\ell_1\ell_2},(\cB_{S\ell_1\ell_2}^\star)}\bigl(K,\overline{T}_{f,\p}^\dagger\bigr)^\epsilon, \] 
so from \eqref{inclusion} there is also an inclusion 
\[ \Sel_{{S\ell_1}}\bigl(K,\overline{T}_{f,\p}^\dagger\bigr)^\epsilon\subset\Sel_{{S\ell_1\ell_2}}\bigl(K,\overline{T}_{f,\p}^\dagger\bigr)^\epsilon. \]
Finally, $x_2\in \Sel_{{S\ell_1}}\bigl(K,\overline{T}_{f,\p}^\dagger\bigr)^\epsilon$ and $\loc_{\ell_2}(x_2)\neq 0$, which contradicts part (1) of Lemma \ref{lemma7.2}. 

Now suppose $\dim_{k_\p }\Bigl(\Sel_{{S\ell_1\ell_2}}\bigl(K,\overline{T}_{f,\p}^\dagger\bigr)^\epsilon\Bigr)<\dim_{k_\p }\Bigl(\Sel_{{S\ell_1\ell_2}}\bigl(K,\overline{T}_{f,\p}^\dagger\bigr)^{-\epsilon}\Bigr)$. By Proposition \ref{prop masoero} and the non-triviality of $\kappa_{S\ell_1\ell_2}^\star$, we have $-\epsilon=\epsilon_{S\ell_1\ell_2}$. Then Proposition \ref{prop masoero} gives  
\begin{equation} \label{eq7.6}
\dim_{k_\p }\Bigl(\Sel_{{S\ell_1\ell_2},(\cB_{S\ell_1\ell_2}^\star)}\bigl(K,\overline{T}_{f,\p}^\dagger\bigr)^\epsilon\Bigr)\leq 
\nu_{S\ell_1\ell_2}^\star
\end{equation}
and 
\begin{equation} \label{eq7.7}
\dim_{k_\p }\Bigl(\Sel_{{S\ell_1\ell_2}}\bigl(K,\overline{T}_{f,\p}^\dagger\bigr)^{-\epsilon}\Bigr)=\nu_{S\ell_1\ell_2}^\star+1.
\end{equation}
Now $\dim_{k_\p}\Bigl(\Sel_{{S\ell_1}}\bigl(K,\overline{T}_{f,\p}^\dagger\bigr)^{-\epsilon}\Bigr)\geq\nu_{S\ell_1\ell_2}^\star+1$ by part (1) of Lemma \ref{lemma7.2} and \eqref{eq7.7}, so from \eqref{eq7.3} we get $\dim_{k_\p}\Bigl(\Sel_{{S}}\bigl(K,\overline{T}_{f,\p}^\dagger\bigr)^{-\epsilon}\Bigr)\geq\nu_{S\ell_1\ell_2}^\star+1$. Thus, by \eqref{eq7.1} we have 
\[ \dim_{k_\p }\Bigl(\Sel_{{S}}\bigl(K,\overline{T}_{f,\p}^\dagger\bigr)^\epsilon\Bigr)\geq\nu_{S\ell_1\ell_2}^\star+2. \] 
On the other hand, equation \eqref{eq7.2} then implies $\dim_{k_\p}\Bigl(\Sel_{{S\ell_1}}\bigl(K,\overline{T}_{f,\p}^\dagger\bigr)^\epsilon\Bigr)\geq\nu_{S\ell_1\ell_2}^\star+1$. Finally, combining \eqref{eq7.6} and \eqref{inclusion} yields
$\dim_{k_\p}\Bigl(\Sel_{{S\ell_1}}\bigl(K,\overline{T}_{f,\p}^\dagger\bigr)^\epsilon\Bigr)\leq\nu_{S\ell_1\ell_2}^\star$, which contradicts the previous inequality. \end{proof}

\begin{remark} \label{strategy-rem}
Theorem \ref{kol-thm}, whose proof requires $p>k+1$ (\emph{cf.} condition (H-$p$)), should be regarded as complementary to \cite[Theorem 3.35]{LV-TNC}, where Kolyvagin's conjecture was proved for $p<k$ under the congruence condition $k\equiv2\pmod{2(p-1)}$. In order to elaborate on this point, recall that this technical assumption, which essentially amounts to selecting a component of the $p$-adic weight space of Coleman--Mazur (\cite{ColMaz}), is needed in \cite{LV-TNC} because of the $p$-adic deformation approach adopted there, which is completely different from the strategy we follow in this paper. More precisely, the arguments in \cite{LV-TNC} exploit the arithmetic of $p$-adic Hida families (essentially in the guise of specialisation results for big Heegner points due to Howard, Castella and Ota, \emph{cf.} \cite{CasHeeg}, \cite{Howard-derivatives}, \cite{Ota-JNT}) and uses the results in weight $2$ from \cite{SZ} and \cite{zhang-selmer} as a ``bridge'' to the higher weight case.
\end{remark}

%As a consequence of Theorem \ref{kol-thm}, we obtain a proof (under our current assumptions) of the strong form of Kolyvagin's conjecture in higher weight (Conjecture \ref{strong-kolyvagin-conj}).

%\begin{corollary} \label{KolEll} 
%If the triple $(f,\p,K)$ satisfies Assumption $\ref{ass}$, then Conjecture \ref{strong-kolyvagin-conj} for $(f,\p,K)$ holds true. 
%\end{corollary}

%\begin{proof} Take $S=1$ in Theorem \ref{kol-thm}. \end{proof}

\section{On the Tamagawa number conjecture for modular motives} \label{tamagawa-sec}

The Tamagawa number conjecture of Bloch and Kato (\cite{BK}) predicts formulas for special values of $L$-functions of motives and can be viewed as a vast generalisation of the analytic class number formula and of the Birch--Swinnerton-Dyer conjecture for abelian varieties. The conjecture of Bloch--Kato, which was originally expressed in terms of Haar measures and Tamagawa numbers, was later reformulated and extended by Fontaine and Perrin-Riou (\cite{FPR}; \emph{cf.} also \cite{fontaine-bourbaki}) using the language of determinants of complexes and Galois cohomology; similar ideas were developed also by Kato (\cite{Kato-kodai}, \cite{Kato-Iwasawa}). 

In this section, we describe an application of the main result of this paper to the $p$-part of the Tamagawa number conjecture ($p$-TNC) for modular motives in analytic rank $1$. In doing this, we follow \cite{LV-TNC}, to which we systematically refer for definitions, details and proofs.

\subsection{Statement of $p$-TNC for modular motives} \label{TNC-statement-subsec}

Let $f\in S_k(\Gamma_0(N))$ be a newform of weight $k\geq4$ and square-free level $N$. We freely use the notation introduced in previous sections; in particular, $F$ is the Hecke field of $f$, whose ring of integers will be denoted by $\cO_F$, and $\p$ is a prime of $F$ above the prime number $p$. Set $F_\infty\defeq F\otimes_\Q\R$, $F_p\defeq F\otimes_\Q\Q_p$ and $\cO_p\defeq\cO_F\otimes_\Z\Z_p$. We attach to $f$ and $p$ the following objects.

\begin{itemize}
\item The motive $\MM=(X,\Pi,k/2)$ of $f$. This is a Grothendieck (\emph{i.e.}, homological) motive defined over $\Q$ with coefficients in $F$, equipped with its \'etale realization $V_p$ for each prime number $p$ (which is an $F_p$-module), its Betti (\emph{i.e.}, singular) realization and its de Rham realization (which are $F$-vector spaces), and comparison isomorphisms between these realisations. Here $X$ is the Kuga--Sato variety of level $N$ and weight $k$, while $\Pi$ is a projector on the ring of correspondences of $X$; see \cite[\S 2.2 and \S 2.4]{LV-TNC} for details. 
\item The (Bloch--Kato) Shafarevich--Tate group $\Sha_p^{\BKK}(\Q,\MM)$ of $\MM$ at $p$, which is defined as the quotient of the Bloch--Kato Selmer group of $V_p/T_p$ by its maximal $p$-divisible subgroup, where $T_p$ is a suitable Galois-stable $\cO_p$-lattice in $V_p$ (see \cite[\S 2.18]{LV-TNC}). The group $\Sha_p^{\BKK}(\Q,\MM)$ is finite. %In our arguments, the interplay between the \emph{finite} group $\Sha_p^{\BKK}(\Q,\MM)$ and the Shafarevich--Tate group $\Sha_p^{\Nek}(\Q,\MM)$ of Nekov\'a\v{r}, which is the quotient of the Bloch--Kato Selmer group of $V_p/T_p$ by the image of a certain $p$-adic Abel--Jacobi map, will be crucial (see, \emph{e.g.}, \S \ref{sha-comparison-subsubsec} and \S \ref{sha-structure-subsubsec}).
\item For every place $v$ of $\Q$ and prime $p$, a Tamagawa $\cO_p$-ideal $\Tam_v^{(p)}(\MM)$, whose definition is recalled in \cite[\S 2.21]{LV-TNC} (in particular, $\Tam_v^{(p)}(\MM)=\cO_p$ for all but finitely many $v$). 
\item The $p$-torsion part $\Tors_p(\MM)$ of $\MM$ (see \cite[\S 2.23.1]{LV-TNC}).
\item The period $\Omega_\MM\in(F\otimes_\Q\C)^\times$ coming from the comparison isomorphism between Betti and de Rham realizations (see \cite[\S 2.23.3]{LV-TNC}).
\item The motivic cohomology group $H^1_\mot(\Q,\MM)$, defined in \cite[\S 2.6]{LV-TNC}. It is a conjecturally finite-dimensional $F$-vector space (see, \emph{e.g.}, \cite[Conjecture 2.11]{LV-TNC}); assuming this finite-dimensionality, we set 
\[ r_\alg(\MM)\defeq\dim_F\bigl(H^1_\mot(\Q,\MM)\bigr). \]
The $F_\infty$-module $H^1_\mot(\Q,\MM)\otimes_FF_\infty$ is equipped with a conjecturally non-degenerate height pairing \emph{\`a la} Gillet--Soul\'e (see, \emph{e.g.}, \cite[\S 2.7]{LV-TNC}); we write $\Reg_{\mathscr B}(\MM)$ for the determinant of this pairing with respect to an $F$-basis $\mathscr B$ of $H^1_\mot(\Q,\MM)$, so that $\Reg_{\mathscr B}(\MM)\not=0$ if the pairing is non-degenerate. The $F_p$-module $H^1_\mot(\Q,\MM)\otimes_FF_p$ is endowed with a $p$-adic regulator map 
\[ \reg_p:H^1_\mot(\Q,\MM)\otimes_FF_p\longrightarrow H^1_f(\Q,V_p) \]
with values in the Bloch--Kato Selmer group $H^1_f(\Q,V_p)$ of $V_p$; this map is conjectured to be an isomorphism of $F_p$-modules (see \cite[Conjecture 2.47]{LV-TNC}).  
\item The \emph{completed} $L$-function $\Lambda(\MM,s)$ of $\MM$, which is an entire function on $\C$. We write $r_\an(\MM)$ (respectively, $\Lambda^*(\MM,0)$) for the order of vanishing (respectively, the leading term of the Taylor expansion) of $\Lambda(\MM,s)$ at $s=0$ (see \cite[\S 2.9]{LV-TNC}). 
\end{itemize}
The formulation, in the vein of Fontaine--Perrin-Riou, of the Tamagawa number conjecture for the motive $\MM$ can be found in \cite[Conjecture 2.52]{LV-TNC}. Here we recall an equivalent, more explicit version of its $p$-part that holds under certain arithmetic assumptions and involves all the ingredients in the previous list. In the statement below, for a finite $\cO_p$-module $M$ we denote by $\mathcal{I}(M)$ the $\cO_p$-ideal such that $\ord_\p\bigl(\mathcal{I}(M)\bigr)=\mathrm{length}_{\cO_\p}(M)$ for each prime $\p$ of $F$ above $p$, where $\cO_\p$ is the completion of $\cO_F$ at $\p$ and $\ord_\p$ is the $\p$-adic valuation.

\begin{theorem} \label{motivesthm}
Assume that 
\begin{enumerate}
\item the Gillet--Soul\'e height pairing on $H^1_\mot(\Q,\MM)\otimes_FF_\infty$ is non-degenerate (i.e., \cite[Conjecture 2.17]{LV-TNC} holds true);
\item the rationality conjecture of Beilinson and Deligne (\cite[Conjecture 2.39]{LV-TNC}) holds true for $\MM$;
\item the $p$-adic regulator $\reg_p$ is an isomorphism (i.e., the $p$-part of \cite[Conjecture 2.47]{LV-TNC} holds true for $K=\Q$).
%\item $\Sha_p^{\BKK}(\Q,\MM)$ is finite.
\end{enumerate}
The $p$-part of the Tamagawa number conjecture for $\MM$ is equivalent to the equality 
\begin{equation} \label{eqBK}
\biggl(\frac{\Lambda^*(\MM,0)}{\Omega_\MM\cdot\Reg_{\mathscr B}(\MM)}\biggr)=\frac{\mathcal{I}\bigl(\Sha_p^{\BKK}(\Q,\MM)\bigr)\cdot\mathcal{I}_p(\gamma_f)\cdot\prod_{v\in S}\mathrm{Tam}_v^{(p)}(\MM)}{\bigl(\det(\mathtt{A}_{\tilde{\mathscr{B}}})\bigr)^2\cdot\Tors_p(\MM)} \tag{$p$-$\mathrm{TNC}_{\mathscr B}$}
\end{equation}
of fractional $\cO_p$-ideals.  
\end{theorem}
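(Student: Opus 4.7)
The plan is to establish the equivalence by unpacking the Fontaine--Perrin-Riou reformulation of $p$-TNC, as recalled in \cite{LV-TNC}. That formulation expresses the conjecture as the equality of two $\cO_p$-lattices inside a single determinant line (the so-called \emph{fundamental line} $\Delta_f(\MM)$): one coming from the $L$-value via the period and comparison isomorphisms, the other coming from integral \'etale cohomology. The three assumptions are precisely what is needed to render both lattices explicitly computable and to identify them with the ingredients appearing in \eqref{eqBK}.

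First I would use assumptions (1)--(3) to render the quantities in \eqref{eqBK} well-defined. Assumption (1) forces $\Reg_{\mathscr B}(\MM) \neq 0$; combined with assumption (2), the ratio $\Lambda^*(\MM,0)/\bigl(\Omega_\MM \cdot \Reg_{\mathscr B}(\MM)\bigr)$ lies in $F$ and thus generates a fractional $\cO_p$-ideal. Assumption (3) identifies $H^1_\mot(\Q,\MM) \otimes_F F_p$ with the Bloch--Kato Selmer group $H^1_f(\Q,V_p)$, so the motivic cohomology factor of the fundamental line can be replaced by a Galois-cohomological object amenable to Poitou--Tate duality.

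Next I would unwind the integral side via the determinant of the Bloch--Kato Selmer complex $R\Gamma_f(\Q,T_p)$. The exact sequence $0 \to T_p \to V_p \to V_p/T_p \to 0$, together with local--global duality, produces successively: the Shafarevich--Tate ideal $\mathcal{I}\bigl(\Sha_p^{\BKK}(\Q,\MM)\bigr)$ from the torsion quotient of the integral Selmer group; the local Tamagawa factors $\Tam_v^{(p)}(\MM)$ from local cohomology at the places of bad reduction; the crystalline correction $\mathcal{I}_p(\gamma_f)$ at $p$, coming from the interplay between the Bloch--Kato local condition at $p$ and the Hida canonical period normalization; the torsion contribution $\Tors_p(\MM)$ arising from $H^0$-terms; and the factor $\det(\mathtt{A}_{\tilde{\mathscr B}})^2$, which reflects the change of basis between the $F$-basis $\mathscr B$ of $H^1_\mot(\Q,\MM)$ and an $\cO_p$-integral basis $\tilde{\mathscr B}$ of its image under $\reg_p$, squared because of the self-duality of $V_p$ inherited from the self-dual twist.

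The main obstacle is not any single conceptual point but rather the meticulous bookkeeping needed to track integral lattices through the comparison isomorphisms (Betti/de Rham, de Rham/crystalline, \'etale/Betti) and to pin down the signs and normalizations of the periods involved. Once this is done, the equivalence of \eqref{eqBK} with the Fontaine--Perrin-Riou formulation becomes a formal consequence of the three assumptions. Since the motive $\MM$ and its associated realizations are exactly the same as in \cite{LV-TNC}, the argument of \cite{LV-TNC} transfers verbatim to the present setting.
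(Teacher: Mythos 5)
Your proposal is consistent with the paper, whose proof of Theorem~\ref{motivesthm} is a single citation to \cite[Theorem 2.79]{LV-TNC}: the statement is literally that result, and your sketch faithfully reconstructs the Fontaine--Perrin-Riou determinant-line argument carried out there (with one small imprecision: the square on $\det(\mathtt{A}_{\tilde{\mathscr{B}}})$ reflects the Gram-determinant transformation rule for the bilinear height pairing rather than self-duality of $V_p$). Your concluding remark that the argument of \cite{LV-TNC} transfers verbatim to the present setting is exactly the paper's point.
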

The terms $\mathcal{I}_p(\gamma_f)$ and $\mathtt{A}_{\tilde{\mathscr B}}\in\GL_{r_\alg(\MM)}(F_p)$, whose description is given in \cite[\S 2.23]{LV-TNC}, should be viewed as ``correction factors'' that take care of the fact that the definitions of some of the objects appearing in Theorem A involve choices (not reflected in the notation) of suitable bases; with these two extra ingredients in the picture, it can be checked that the validity of the resulting formula is independent of such choices.
\begin{proof}[Proof of Theorem \ref{motivesthm}] This is \cite[Theorem 2.79]{LV-TNC}. \end{proof}

\subsection{On the $p$-TNC for $\MM$ in analytic rank $1$}

We state our result on the $p$-part of the Tamagawa number conjecture for $\MM$ when the analytic rank of $\MM$ is $1$.

\subsubsection{$p$-TNC for $\MM$ in analytic rank $1$: assumptions} \label{TNC-ass}

In addition to Assumption \ref{ass}, which is in force until the end of this article, we need two general hypotheses in arithmetic algebraic geometry:
\begin{itemize}
\item an integral variant of the $p$-adic regulator map $\reg_p$ is an isomorphism;
\item $p$-adic Abel--Jacobi maps are injective on suitable modules of Heegner-type cycles.
\end{itemize}
These are specific instances of major conjectures that are expected to hold true for all primes $p$: as in \cite{LV-TNC}, we assume their validity whenever necessary.

As in the rest of the paper, $\p$ is a prime of $F$ above $p$. Write $D_F$ for the discriminant of $F$ and $c_f$ for the index of the order $\Z\bigl[a_n(f)\mid n\geq1\bigr]$ in $\cO_F$. As in \cite{LV-TNC}, we work under the following additional assumptions on the pair $(f,p)$:
\begin{itemize}
\item $N\geq3$ is square-free;
\item $p\nmid 6ND_F\mathrm{N}(\mathfrak a_{f,\Gamma(N)})c_f$;
%\item $\rho_p$ has big image;
\item $f$ is $p$-isolated, \emph{i.e.}, there are no non-trivial congruences modulo $p$ between $f$ and normalized eigenforms in $S_k(\Gamma_0(N))$;
%\item $a_p(f)\in\cO_\p^\times$ for all primes $\p$ of $F$ above $p$; 
%\item $a_p(f)\not\equiv1\pmod{\p}$ for all primes $\p$ of $F$ above $p$;
\item $\bar\rho_\p$ is ramified at all primes dividing $N$ for all primes $\p$ of $F$ above $p$;
%\item $k\equiv2\pmod{2(p-1)}$.
\end{itemize}
Here $\mathrm{N}(\mathfrak a_{f,\Gamma(N)})$ is the norm of a certain ideal $\mathfrak a_{f,\Gamma(N)}$ of $\cO_F$ defined in \cite[\S 4.3.6]{LV-TNC}. Thanks to a result of Ribet (see, \emph{e.g.}, \cite[Theorem 3.8]{LV-TNC}), the second and third conditions rule out only finitely many ``exceptional'' prime numbers $p\geq k$, which can be explicitly characterized (\emph{cf.} \cite[\S 3.2.1]{LV-TNC}). On the other hand, the residual ramification properties at the primes dividing $N$ are a higher weight counterpart of an analogous condition appearing in W. Zhang's paper on Kolyvagin's conjecture and the $p$-part of the Birch--Swinnerton-Dyer formula for rational elliptic curves (\cite{zhang-selmer}).

\subsubsection{$p$-TNC for $\MM$ in analytic rank $1$: statement} 

Write $\Sha_p^{\Nek}(\Q,\MM)$ for the Shafarevich--Tate group of Nekov\'a\v{r}, which is the quotient of the Bloch--Kato Selmer group of $V_p/T_p$ by the image of a certain $p$-adic Abel--Jacobi map (see, \emph{e.g.}, \cite[\S 4.5.1]{LV-TNC}). There is a natural surjection $\Sha_p^{\Nek}(\Q,\MM)\twoheadrightarrow\Sha_p^{\BKK}(\Q,\MM)$ of $\cO_p$-modules. Recall that $r_\alg(\MM)$ (respectively, $r_\an(\MM)$) denotes the algebraic (respectively, analytic) rank of $\MM$. 

\begin{theorem}[$p$-TNC for $\MM$] \label{p-TNC}
Suppose that $r_\an(\MM)=1$. Under the assumptions in \S \ref{TNC-ass}, the following results hold:
\begin{enumerate}
\item $r_\alg(\MM)=1$; 
\item $\Sha_p^{\BKK}(\Q,\MM)=\Sha_p^{\Nek}(\Q,\MM)$;  
\item the $p$-part of the Tamagawa number conjecture for $\MM$ is true.
\end{enumerate}
\end{theorem}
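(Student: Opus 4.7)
The plan is to reduce to the explicit formulation \eqref{eqBK} supplied by Theorem \ref{motivesthm} and then, granted that reduction, to deploy the strong form of Kolyvagin's conjecture (Theorem \ref{kol-thm}) exactly as in \cite{LV-TNC}. The starting input is a higher weight Gross--Zagier-type formula (Nekov\'a\v{r} for $D = 1$, and the corresponding statements on Shimura curves for $D > 1$) which, under the hypothesis $r_\an(\MM) = 1$, expresses the leading term $\Lambda^*(\MM,0)/\Omega_\MM$ in terms of the Gillet--Soul\'e self-height of the Heegner cycle $y_1(1) \in H^1\bigl(K, T^\dagger_{f,\wp}\bigr)$, up to explicit non-zero factors. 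In particular $y_1(1)$ is non-torsion and its mod-$\wp$ image $c_1(1)$ is non-zero, so the vanishing order of $\kappa_1^\star$ satisfies $\nu_1^\star = 0$.

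Feeding $\nu_1^\star = 0$ into Proposition \ref{prop masoero} with $S = 1$ yields $\dim_{k_\p} \Sel\bigl(K, \overline{T}^\dagger_{f,\wp}\bigr)^{\epsilon_1} = 1$ and $\dim_{k_\p} \Sel\bigl(K, \overline{T}^\dagger_{f,\wp}\bigr)^{-\epsilon_1} = 0$; together with the sign of the functional equation of $L(f/K, s)$ and descent from $K$ to $\Q$, this forces $\dim_{F_p} H^1_f(\Q, V_p) = 1$. Since $\reg_p$ is assumed to be an isomorphism, assertion (1) follows. For (2), once the Bloch--Kato Selmer group is one-dimensional and is generated (up to torsion) by the image of the Heegner cycle, the image of the $p$-adic Abel--Jacobi map coincides with $H^1_f(\Q, V_p)$ modulo torsion, whence $\Sha_p^{\BKK}(\Q, \MM) = \Sha_p^{\Nek}(\Q, \MM)$ directly from the definitions of the two groups.

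The ideal equality in (3) is obtained by matching the two sides of \eqref{eqBK}. The transcendental factor $\Lambda^*(\MM,0)/\bigl(\Omega_\MM \cdot \Reg_{\mathscr B}(\MM)\bigr)$ is controlled, via Gross--Zagier, by the index of the Heegner class in an integral Bloch--Kato Selmer group, and the arithmetic right-hand side reduces to the two-sided bound
\[ \length_{\cO_\p}\bigl(\Sha_p^{\BKK}(\Q, \MM)\bigr) \;=\; 2 \cdot \ord_\p\bigl([H^1_f(\Q, T_p) : \cO_\p \cdot c_1(1)]\bigr) - \sum_{v \in S} \ord_\p\bigl(\Tam_v^{(p)}(\MM)\bigr). \]
The upper bound is Kolyvagin's classical argument applied to the full derived system $\kappa_1$, whose non-triviality in strong form is precisely Theorem \ref{kol-thm}; the matching lower bound rests on the Skinner--Urban-type input already used to prove Theorem \ref{rank 0}. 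The main obstacle is not conceptual but bookkeeping: one must carefully align all $p$-adic and archimedean normalisations (periods, self-intersection factors on Kuga--Sato varieties, Euler factors at bad primes, and the correction terms $\mathcal{I}_p(\gamma_f)$ and $\det(\mathtt{A}_{\tilde{\mathscr B}})$ of Theorem \ref{motivesthm}). Since this verification has already been carried out in \cite{LV-TNC} under the hypotheses of \S \ref{TNC-ass}, and since that verification depends on $(f, \wp)$ only through Kolyvagin's conjecture and the assumptions of \S \ref{TNC-ass}, Theorem \ref{kol-thm} allows the arguments of \cite{LV-TNC} to be transferred verbatim from the range $p < k$ treated there to the complementary range $p > k+1$ considered here.
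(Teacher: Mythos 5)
Your overall approach matches the paper's, which simply defers to \cite[Theorem 4.41]{LV-TNC} with Theorem \ref{kol-thm} substituted for the Kolyvagin input, and your closing paragraph captures exactly the transfer argument the paper invokes (together with the observation, made in \S\ref{TNC-remarks}, that the Skinner--Urban results needed do not actually require the congruence condition $k\equiv 2\pmod{2(p-1)}$ that limited \cite{LV-TNC}). One inaccuracy in your intermediate reconstruction deserves flagging: the inference ``$y_1(1)$ is non-torsion, hence its mod-$\wp$ image $c_1(1)$ is non-zero, hence $\nu_1^\star=0$'' does not follow directly from the Gross--Zagier formula. Non-torsion of the Heegner cycle says nothing about its $\wp$-indivisibility inside the Selmer group; establishing $c_1(1)\neq 0$ in analytic rank one is precisely the content of Wang's indivisibility result (Theorem \ref{rank 0} here), whose proof in turn needs the residual Selmer group to be one-dimensional and rests on the Skinner--Urban main-conjecture input together with the explicit reciprocity laws. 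In the logical order actually used, one first has Kolyvagin's conjecture (Theorem \ref{kol-thm}) unconditionally, then combines the triangulation of Proposition \ref{prop masoero} with the Gross--Zagier and parity constraints to pin down $\nu_1^\star=0$ and the Selmer ranks; the indivisibility is an output of that machinery, not an ``in particular'' consequence of the height formula alone.
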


\begin{proof} This is \cite[Theorem 4.41]{LV-TNC}. Notice that, in particular, part (3) is proved by checking that equality \eqref{eqBK} in Theorem \ref{motivesthm} holds true in this case. \end{proof}

This is Theorem B in the introduction. It is worth observing that, albeit not available in the literature and never formulated for the motive $\MM$, parts (1) and (2) were essentially already known, thanks to a combination of work of Nekov\'a\v{r} on the arithmetic of Chow groups of Kuga--Sato varieties (\cite{Nek}) and analytic results by Bump--Friedberg--Hoffstein (\cite{BFH}), Murty--Murty (\cite{MM-derivatives}) and Waldspurger (\cite{Waldspurger}); thus, as for \cite[Theorem 4.41]{LV-TNC}, the novelty of Theorem \ref{p-TNC} lies almost entirely in part (3). 

\subsubsection{$p$-TNC for $\MM$ in analytic rank $1$: remarks on the proof} \label{TNC-remarks}

Theorem \ref{p-TNC} can be proved exactly as \cite[Theorem 4.41]{LV-TNC}, observing that \cite[Theorem 3.35]{LV-TNC}, which proves Kolyvagin's conjecture in higher weight under the congruence condition $k\equiv2\pmod{2(p-1)}$, must be replaced here by Theorem \ref{kol-thm}. More precisely, the above-mentioned technical condition was needed in \cite{LV-TNC} to apply, in particular, results by Skinner--Urban on the (cyclotomic) Iwasawa main conjecture for modular forms (\cite{SU}). Actually, a closer inspection reveals that this congruence assumption is not really employed in the proof of the results from \cite{SU} we are interested in, so that we do not need to impose it in our present paper (notice, however, that it cannot be removed from \cite{LV-TNC}, as it is crucially used in the Hida-theoretic arguments alluded to in Remark \ref{strategy-rem}). Finally, we would like to point out that results from \cite{SU} are used in \cite{wang} as well without any congruence condition whatsoever on $k$ and $p$.

\section{Other consequences on the arithmetic of $f$} \label{consequences-sec}

In this final section, we collect some further consequences of our result on Kolyvagin's conjecture. Since proofs can be found in \cite{LV-TNC}, here we content ourselves with describing the statements of these by-products of Theorem \ref{kol-thm}. We remark that Assumption \ref{ass} is in force.

\subsection{Structure of Bloch--Kato--Selmer groups}

Our usual notation being in force, let $\p$ be a prime of $F$ above $p$. As before, let $H^1_f(K,A_\p)^\pm$ denote the $\pm1$-eigenspaces of complex conjugation acting on $H^1_f(K,A_\p)$; set
\[ r_\p(f/K)\defeq \corank_{\cO_\p}H^1_f(K,A_\p)\in\N,\quad r^\pm_\p(f/K)\defeq \corank_{\cO_\p}H^1_f(K,A_\p)^\pm\in\N \]
and observe that
\[ r_\p(f/K)=r^+_\p(f/K)+r^-_\p(f/K). \]
Recall the vanishing order $\nu_S$ of $\kappa_S$ from Definition \ref{vanishing-def} and, in line with the notation in \cite[\S 5.1]{LV-TNC}, set $\nu_\infty\defeq\nu_1$. It is convenient to introduce the sign
\begin{equation} \label{varepsilon-infty-eq}
\epsilon_\infty\defeq\mathrm{sign}\bigl(\epsilon(f)\cdot(-1)^{\nu_\infty+1}\bigr)\in\{\pm\}, 
\end{equation}
which will appear in the next result. 

The prime $p$, which is unramified in $F$, is a uniformizer for $\cO_\p$. Set $\epsilon\defeq\mathrm{sign}\bigl(\epsilon(f)\bigr)\in\{\pm\}$. Write 
\[ H^1_f(K,A_\p)^\pm\simeq (F_\p/\mathcal{O}_\p)^{r_\p^\pm(f/K)}\oplus\mathcal X_\p^\pm \] 
where $\mathcal X_\p^\pm$ is a finite $\cO_\p$-module, then consider splittings
\begin{align}
\mathcal X_\p^{-\epsilon}&\simeq(\cO_\p/p^{n_1}\cO_\p)^2\oplus(\cO_\p/p^{n_3}\cO_\p)^2\oplus\dots\\
\intertext{and}
\mathcal X_\p^{\epsilon}&\simeq(\cO_\p/p^{n_2}\cO_\p)^2\oplus(\cO_\p/p^{n_4}\cO_\p)^2\oplus\dots
\end{align}
of $\cO_\p$-modules. Finally, define the integers $N_i\in\N$ as in \cite[\S 4.7]{LV-TNC} (\emph{cf.} \cite[Theorem 4.31]{LV-TNC}) and let $\epsilon_\infty\in\{\pm\}$ be the sign from \eqref{varepsilon-infty-eq}. 

\begin{theorem} \label{main-vanishing-thm} 
\begin{enumerate}
\item $r_\p^{\epsilon_\infty}(f/K)=\nu_\infty+1$ and $r_\p^{-\epsilon_\infty}(f/K)\leq\nu_\infty$.
\item $\nu_\infty=\max\bigl\{r^+_\p(f/K),r^-_\p(f/K)\bigr\}-1$.
\item $n_i=N_i$ for all $i>\nu_\infty+1$. 
\item $0\leq\nu_\infty-r_\p^{-\epsilon_\infty}(f/K)\equiv0\pmod{2}$.
\end{enumerate}
\end{theorem}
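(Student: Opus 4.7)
The plan is to apply Theorem~\ref{kol-thm} with $S=1$ to deduce $\kappa_1\supset\kappa_1^\star\neq\{0\}$, which makes the vanishing order $\nu_\infty=\nu_1$ finite and unlocks the classical Kolyvagin-style structural machinery. At the residual level, one invokes Proposition~\ref{prop masoero} at $S=1$ together with the identifications in \eqref{galois-isom-eq}: this controls both eigenspaces of $\Sel\bigl(K,\overline{T}_{f,\p}^\dagger\bigr)$ and, after the standard check that the privileged sign coincides with the $\epsilon_\infty$ of \eqref{varepsilon-infty-eq}, yields
\[ \dim_{k_\p}\Sel\bigl(K,\overline{T}_{f,\p}^\dagger\bigr)^{\epsilon_\infty}=\nu_\infty+1,\qquad\dim_{k_\p}\Sel\bigl(K,\overline{T}_{f,\p}^\dagger\bigr)^{-\epsilon_\infty}\leq\nu_\infty. \]

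To upgrade this mod-$\p$ information to the $\cO_\p$-corank of the Bloch--Kato--Selmer group, I would re-run the triangulation argument on each finite quotient $T_{f,\p}^\dagger/\p^m T_{f,\p}^\dagger$, using the deeper classes $c_{n,m}\in\kappa_1$ (this is precisely where the full system $\kappa_1$ is indispensable, rather than merely $\kappa_1^\star$), combine with the Kummer-type injections $\iota_{\phi,L,m}$, and pass to the limit in $m$. The outcome is a pair of compatible upper and lower bounds on $\corank_{\cO_\p}H^1_f(K,A_\p)^\pm$, which give (1); statement (2) is then immediate from (1), since $\epsilon_\infty$ is by design the sign supporting the larger corank.

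For (3) and (4), I would follow the refined analysis of \cite{LV-TNC}: exploiting the full depth-profile of the $c_{n,m}$, one matches the invariants $N_i$, defined from the $\p$-adic valuations of the Kolyvagin derivatives, with the elementary divisors $n_i$ of the finite piece $\mathcal X_\p^\pm$ in degrees strictly above the leading block, which is (3). The parity assertion (4) then follows by combining (1)--(3) with the skew-symmetry of the Flach--Cassels--Tate pairing on $H^1_f(K,A_\p)^{-\epsilon_\infty}$, forcing $\nu_\infty-r_\p^{-\epsilon_\infty}(f/K)$ to be even and non-negative. The hard step will be this last integral refinement: propagating the residual triangulation through all depths $m\leq M(n)$ requires a delicate control theorem together with careful bookkeeping of which Kolyvagin classes remain non-zero modulo $\p^m$, and is where the arguments of \cite[\S 4--\S 5]{LV-TNC} are invoked essentially verbatim, now with Theorem~\ref{kol-thm} serving as the non-triviality input in place of the congruence-restricted \cite[Theorem~3.35]{LV-TNC}.
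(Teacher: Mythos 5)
Your sketch is correct and takes essentially the same approach as the paper: the paper's proof is simply a citation of \cite[Theorem 5.4]{LV-TNC}, and your outline faithfully reconstructs that argument — using Theorem~\ref{kol-thm} at $S=1$ as the non-triviality input, Proposition~\ref{prop masoero} for the residual eigenspace control, the depth-refined classes $c_{n,m}$ from the full system $\kappa_1$ for the $\cO_\p$-integral upgrade, and the arguments of \cite[\S\S 4--5]{LV-TNC} for parts (3) and (4). You also correctly identify the key point the paper itself highlights, namely that Theorem~\ref{kol-thm} replaces the congruence-restricted \cite[Theorem~3.35]{LV-TNC} as the input to the otherwise identical proof.
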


\begin{proof} This is \cite[Theorem 5.4]{LV-TNC}. \end{proof}

\subsection{$\p$-parity results}

Fix a prime $\p$ of $F$ above $p$ and set
\begin{equation} \label{corank-eq}
r_\p(f)\defeq\corank_{\cO_\p}H^1_f(\Q,A_\p)\in\N. 
\end{equation}
As usual, let $\epsilon(f)\in\{\pm1\}$ be the root number of $f$.

\begin{theorem} \label{parity-thm} 
$(-1)^{r_\p(f)}=\epsilon(f)$. 
\end{theorem}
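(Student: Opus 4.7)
The plan is to derive Theorem \ref{parity-thm} from Theorem \ref{main-vanishing-thm} (which already rests on Theorem \ref{kol-thm}) by descending from $K$ to $\Q$ via the quadratic character $\chi_K$ of $K/\Q$, and by exploiting the fact that the Heegner hypothesis built into our setup (namely $N=N^+N^-$ with $N^-$ a square-free product of an \emph{even} number of inert primes) forces the root number of $f/K$ to be $-1$.

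First I would record the Galois-module decomposition
\[
H^1_f(K,A_\p)\otimes_{\cO_\p}F_\p\;\simeq\;H^1_f(\Q,A_\p)\otimes_{\cO_\p}F_\p\;\oplus\;H^1_f\bigl(\Q,A_\p\otimes\chi_K\bigr)\otimes_{\cO_\p}F_\p,
\]
coming from inflation--restriction together with the fact that $p$ is odd, and observe that this decomposition is compatible with the $\pm1$-eigenspace splitting under the action of complex conjugation: the $+$-part recovers (up to finite kernel and cokernel, which is all we need for coranks) the $\Q$-Selmer group of $f$, while the $-$-part recovers the corresponding Selmer group of the quadratic twist $f^K$ by $\chi_K$. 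Hence, writing $r_\p(f^K)$ for the corank of the Bloch--Kato Selmer group of the twist, we obtain the equalities $r_\p^+(f/K)=r_\p(f)$ and $r_\p^-(f/K)=r_\p(f^K)$.

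Next I would invoke parts (1) and (4) of Theorem \ref{main-vanishing-thm}: combining the equality $r_\p^{\epsilon_\infty}(f/K)=\nu_\infty+1$ with the congruence $\nu_\infty\equiv r_\p^{-\epsilon_\infty}(f/K)\pmod 2$ yields
\[
r_\p^{\epsilon_\infty}(f/K)\equiv\nu_\infty+1\pmod 2,\qquad r_\p^{-\epsilon_\infty}(f/K)\equiv\nu_\infty\pmod 2,
\]
so that the parity of $r_\p(f)$ is determined by $\nu_\infty$ (and by whether $\epsilon_\infty=+$ or $\epsilon_\infty=-$). Unwinding the definition $\epsilon_\infty=\mathrm{sign}\bigl(\epsilon(f)\cdot(-1)^{\nu_\infty+1}\bigr)$ in \eqref{varepsilon-infty-eq} gives $\epsilon(f)=(-1)^{\nu_\infty+1}$ in the case $\epsilon_\infty=+$, and $\epsilon(f)=(-1)^{\nu_\infty}$ in the case $\epsilon_\infty=-$; in either case a direct computation shows $(-1)^{r_\p(f)}=\epsilon(f)$.

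The potential obstacle is the cleanness of the step identifying $H^1_f(K,A_\p)^{\pm}$ with the $\Q$-rational Selmer groups of $f$ and $f^K$, since a priori one only has an isomorphism after inverting $p$, and one must check that the local conditions defining Bloch--Kato--Selmer groups are preserved under the $\chi_K$-twist at the ramified primes and at $p$ (where the ordinarity hypothesis on $\phi=\mathrm{JL}(f)$ is in force). This is, however, routine once one keeps in mind that $\chi_K$ is unramified outside the finite set of primes dividing $D_K$, which is coprime to $Np$ by assumption, and that the ordinary filtration is stable under unramified twists; no genuinely new input beyond what is already used in Theorem \ref{main-vanishing-thm} is needed. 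As usual (\emph{cf.}\ \cite{LV-TNC}), the proof is identical in structure to the one carried out there, so one could simply quote the corresponding statement in \cite{LV-TNC}.
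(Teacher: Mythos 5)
Your plan is essentially the same as the one the paper delegates to \cite[Theorem 6.1]{LV-TNC}: identify the $+$-eigenspace corank over $K$ with $r_\p(f)$, then combine parts (1) and (4) of Theorem \ref{main-vanishing-thm} with the definition of $\epsilon_\infty$ in \eqref{varepsilon-infty-eq}. The case split on $\epsilon_\infty\in\{+,-\}$ and the resulting parity computations are correct: if $\epsilon_\infty=+$ then $r_\p^+(f/K)=\nu_\infty+1$ and $\epsilon(f)=(-1)^{\nu_\infty+1}$, while if $\epsilon_\infty=-$ then $r_\p^+(f/K)\equiv\nu_\infty\pmod 2$ and $\epsilon(f)=(-1)^{\nu_\infty}$; either way $(-1)^{r_\p(f)}=\epsilon(f)$.

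One small slip: the displayed identity $H^1_f(K,A_\p)\otimes_{\cO_\p}F_\p\simeq H^1_f(\Q,A_\p)\otimes_{\cO_\p}F_\p\oplus H^1_f(\Q,A_\p\otimes\chi_K)\otimes_{\cO_\p}F_\p$ is vacuous as written, since $A_\p$ is a cofinitely generated $\cO_\p$-module and every term tensored with $F_\p$ is zero. What you want is either the $\tau$-eigenspace decomposition of $H^1_f(K,A_\p)$ itself (valid because $p$ is odd), together with the observation that restriction identifies $H^1_f(\Q,A_\p)$ with the $+$-eigenspace and the twisted Selmer group with the $-$-eigenspace, or the equivalent statement for Pontryagin duals tensored with $F_\p$. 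Since the argument only uses the resulting corank equalities $r_\p^+(f/K)=r_\p(f)$ and $r_\p^-(f/K)=r_\p(f^K)$, this is a presentational issue rather than a genuine gap, but you should correct the formula.
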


\begin{proof} See the proof of \cite[Theorem 6.1]{LV-TNC}. \end{proof}

This is a higher weight counterpart of results of Nekov\'a\v{r} on the $p$-parity conjecture for elliptic curves and modular abelian varieties (\cite{Nekovar-parity}, \cite{Nekovar-parity2}); the reader is referred to, \emph{e.g.}, \cite[Ch. 12]{Nek-Selmer}, \cite{Nekovar-growth}, \cite{NP} for results in the same vein for  (Hilbert) modular forms.

\subsection{$\p$-converse theorems}

Assume that $r_\p(f)=1$, so that $\epsilon(f)=-1$ by Theorem \ref{parity-thm}. Consider the imaginary quadratic fields $K$ satisfying the following two conditions:
\begin{itemize}
\item the primes dividing $Np$ split in $K$;
\item $r_\an(f^K)=0$, where $f^K$ is the twist of $f$ by the Dirichlet character associated with $K$.
\end{itemize} 
Denote by $\mathscr I_1(f,p)$ the set of all such fields. It follows from \cite[p. 543, Theorem, (ii)]{BFH} (\emph{cf.} also \cite{Waldspurger}) that $\mathscr I_1(f,p)\not=\emptyset$.

As in \cite[\S 7.1.2]{LV-TNC}, we need hypotheses on the non-degeneracy of the Gillet--Soul\'e height pairings ${\langle\cdot,\cdot\rangle}_{\GS}$ introduced in \cite[\S 2.7]{LV-TNC} and alluded to in \S \ref{TNC-statement-subsec}:
\begin{itemize}
\item[(\texttt{GS})] there is $K\in\mathscr I_1(f,p)$ such that ${\langle\cdot,\cdot\rangle}_{\GS}$ is non-degenerate on $\Heeg_{K,N}\otimes_\Z\,\R$, where $\Heeg_{K,N}$ is the \emph{Heegner module of level $N$} defined in \cite[\S 4.1.3]{LV-TNC}.
\end{itemize}
As before, $\p$ is a prime of $F$ above $p$. Set
\[ X_{\p}(\Q)\defeq\Lambda_{\p}(\Q)\otimes_{\Z}\Q=\Lambda_{\p}(\Q)\otimes_{\cO_\p}\!F_\p \]
and let $r_\p(f)\in\N$ be the corank from \eqref{corank-eq}. The following result is an analogue in higher weight of the algebraic part of \cite[Theorem 1.4, (i)]{zhang-selmer}.

\begin{theorem} \label{main-selmer-thm} 
If $r_\p(f)=1$, then 
\begin{enumerate}
\item $\dim_{F_\p}\bigl(X_\p(\Q)\bigr)=1$;
\item $\Sha_\p^{\Nek}(\Q,\MM)$ is finite.
\end{enumerate}
\end{theorem}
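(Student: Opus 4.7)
The plan is to reduce the converse theorem over $\Q$ to an assertion over a suitably chosen imaginary quadratic field $K$ where a Heegner cycle is available, and then run the Kolyvagin machinery established in this paper. First I would apply the parity result (Theorem \ref{parity-thm}) to the hypothesis $r_\p(f)=1$ to force $\epsilon(f)=-1$, and then choose a field $K\in\mathscr I_1(f,p)$, which is non-empty by the work of Bump--Friedberg--Hoffstein and Waldspurger cited after the definition. By construction, $r_\an(f^K)=0$ and every prime dividing $N$ splits in $K$, so the factorisation $N=N^+N^-$ on which our indefinite quaternionic setting is built gives $N^-=1$, and hence $1\in\mathcal P^{\mathrm{indef}}$: Kolyvagin's conjecture (Theorem \ref{kol-thm}) is then available for $S=1$ in the form $\kappa_1^\star\neq\{0\}$.

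Next I would establish that $r_\p(f/K)=1$. The eigenspace decomposition under $\Gal(K/\Q)$ identifies $H^1_f(K,A_\p)^+$ with $H^1_f(\Q,A_\p)$ and $H^1_f(K,A_\p)^-$ with $H^1_f(\Q,A_{\p,\chi_K})$, so the corank over $K$ splits as $r_\p(f)+r_\p(f^K)$. The analytic rank $0$ condition, combined with the Skinner--Urban/Kato results used already in the proof of Theorem \ref{parity} (and compatible with our Assumption \ref{ass}), forces $r_\p(f^K)=0$, yielding $r_\p(f/K)=1$. Feeding this into Theorem \ref{main-vanishing-thm}(1)--(2) gives $\nu_\infty=0$, which is the crucial numerical output: it says that at the base level ($n=1$) the mod-$\p$ Kolyvagin class $c_1=c_{1,1}(1)$ built out of the Heegner cycle $\Delta_1$ is already non-zero in $H^1\bigl(K,\overline{T}_{f,\p}^\dagger\bigr)$.

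From $c_1\neq 0$ I would deduce (1) and (2) simultaneously. The non-vanishing of $c_1$, together with the injection $\iota_{\phi,K,1}$, lifts through the hypothesised bijectivity of the $p$-adic regulator and the assumed injectivity of the $p$-adic Abel--Jacobi map on Heegner cycles to give that the Heegner class $y_1(1)$ is non-torsion in $\Lambda_{\phi,\p}(K)$; the non-degeneracy hypothesis $(\mathtt{GS})$ on the Gillet--Soul\'e pairing on $\Heeg_{K,N}\otimes_\Z\R$ ensures that this non-triviality is seen in $\Lambda_\p(K)\otimes_\Z\Q$. Projecting onto the $+1$-eigenspace (which corresponds to $\Q$) and using once more that the $-1$-eigenspace has rank $0$, one transports this to a non-zero element of $X_\p(\Q)$, giving $\dim_{F_\p}X_\p(\Q)\geq 1$. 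The reverse inequality is automatic: the $p$-adic Abel--Jacobi map embeds $X_\p(\Q)$ into $H^1_f(\Q,V_\p)$, which has $F_\p$-dimension $r_\p(f)=1$. This proves (1), and moreover shows that the image of $X_\p(\Q)$ has finite index in $H^1_f(\Q,A_\p)$ modulo its maximal divisible submodule.

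The hard part, as usual, will be step three: the passage from the mod-$\p$ non-triviality $c_1\neq 0$ to an actual statement in characteristic zero about $X_\p(\Q)$ and about the finiteness of $\Sha_\p^{\Nek}(\Q,\MM)$. This is precisely where one invokes the Kolyvagin bounds on the Selmer group of $\MM$, the $(\mathtt{GS})$ non-degeneracy, and the conjectural injectivity of the $p$-adic Abel--Jacobi map on the Heegner module; these are the same ingredients used in \cite{LV-TNC} to deduce \cite[Theorem 7.something]{LV-TNC}, and once Theorem \ref{kol-thm} is substituted for \cite[Theorem 3.35]{LV-TNC}, the argument there carries over verbatim to our present complementary range $p>k+1$.
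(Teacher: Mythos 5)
Your overall strategy matches what the paper intends: the proof in this paper is literally a citation to \cite[Theorem 7.3]{LV-TNC}, with the remark (made at the start of Section \ref{consequences-sec} and again in \S\ref{TNC-remarks}) that the argument carries over once Theorem \ref{kol-thm} replaces the Kolyvagin's conjecture proved in \cite{LV-TNC}. Your reconstruction of that argument --- parity forcing $\epsilon(f)=-1$, choice of $K\in\mathscr I_1(f,p)$ with $N^-=1$, Kolyvagin's conjecture for $S=1$ giving $\kappa_1^\star\neq\{0\}$, the triangulation theorem forcing $\nu_\infty=0$ and hence $c_1\neq 0$, then the passage to characteristic zero via the assumed injectivity of the $p$-adic Abel--Jacobi map and the bijectivity of the $p$-adic regulator --- is the right skeleton, and the eigenspace bookkeeping ($c_1$ landing in the $+$-eigenspace because $-\epsilon(f)=+1$) is correct.

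However, you invoke the hypothesis $(\mathtt{GS})$ on the non-degeneracy of the Gillet--Soul\'e height pairing, and this is both unnecessary and inconsistent with the statement as given. Theorem \ref{main-selmer-thm} does not assume $(\mathtt{GS})$; that hypothesis appears only in Theorem \ref{main-converse-Q-thm}, where it is genuinely needed to relate the algebraic side to the analytic rank $r_\an(f)$. The present theorem is purely algebraic. In your third paragraph you write that $(\mathtt{GS})$ ``ensures that this non-triviality is seen in $\Lambda_\p(K)\otimes_\Z\Q$,'' but once you know (from $c_1\neq 0$ and the assumed Abel--Jacobi injectivity) that the Heegner class is a non-torsion element of the finitely generated $\cO_\p$-module $\Lambda_{\phi,\p}(K)$, it survives tensoring with $F_\p$ automatically; no height-pairing input is required. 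You should delete the $(\mathtt{GS})$ references from both the third and fourth paragraphs, as otherwise your argument proves a weaker statement with an extraneous hypothesis, and --- more importantly --- it suggests a conflation of the two converse-type results that the paper deliberately keeps separate.
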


\begin{proof} This is \cite[Theorem 7.3]{LV-TNC}. \end{proof}

We can also offer the following $\p$-converse theorem.

\begin{theorem} \label{main-converse-Q-thm}
If $r_\p(f)=1$ and $(\mathtt{GS})$ holds, then $r_\an(f)=1$.
\end{theorem}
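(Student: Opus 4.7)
The plan is to combine the $\p$-parity theorem with the higher weight Gross--Zagier formula, using the non-triviality of the Heegner cycle supplied by Theorem \ref{main-selmer-thm} (which in turn rests on Theorem \ref{kol-thm}). By Theorem \ref{parity-thm}, the hypothesis $r_\p(f)=1$ forces $\epsilon(f)=-1$, so $r_\an(f)$ is odd and in particular $r_\an(f)\geq 1$; it therefore remains to show the reverse inequality $r_\an(f)\leq 1$.

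Let $K\in\mathscr I_1(f,p)$ be an imaginary quadratic field witnessing $(\mathtt{GS})$: all primes dividing $Np$ split in $K$, $r_\an(f^K)=0$, and the Gillet--Soulé height pairing ${\langle\cdot,\cdot\rangle}_{\GS}$ is non-degenerate on $\Heeg_{K,N}\otimes_\Z\R$. Since $r_\an(f/K)=r_\an(f)+r_\an(f^K)=r_\an(f)$, it suffices to prove $r_\an(f/K)\leq 1$.

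By Theorem \ref{main-selmer-thm}, $\dim_{F_\p}X_\p(\Q)=1$, so $\Lambda_\p(\Q)$ has $\cO_\p$-rank $1$ modulo torsion. Tracing through the proof of \cite[Theorem 7.3]{LV-TNC} (which takes Theorem \ref{kol-thm} as its main input), the generator of $X_\p(\Q)$ arises as the Abel--Jacobi image of a trace over $\Q$ of a Heegner cycle attached to $K$; restricting to $K$ one obtains a non-torsion class $y_K\in\Lambda_\p(K)$ coming from the Abel--Jacobi image of a generator of a $\Gal(K/\Q)$-eigenspace of $\Heeg_{K,N}\otimes_\Z\Q$ (here we invoke injectivity of the $p$-adic Abel--Jacobi map, which is among the assumptions of \S\ref{TNC-ass}, to pass from non-vanishing in Galois cohomology to non-torsionness in $\Heeg_{K,N}$). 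Applying the higher weight Gross--Zagier formula of S.-W.~Zhang, we have an identity of the form
\[ L'(f/K,k/2)\;=\;C\cdot{\big\langle y_K,y_K\big\rangle}_{\GS} \]
with $C\neq 0$. The non-degeneracy of ${\langle\cdot,\cdot\rangle}_{\GS}$ supplied by $(\mathtt{GS})$, combined with the non-torsionness of $y_K$, forces ${\langle y_K,y_K\rangle}_{\GS}\neq 0$; hence $L'(f/K,k/2)\neq 0$ and $r_\an(f/K)\leq 1$, which together with the parity bound yields $r_\an(f)=1$.

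The main obstacle is matching normalisations: one needs to verify that the non-torsion class in $\Lambda_\p(\Q)$ produced by Theorem \ref{main-selmer-thm} is, up to a non-zero scalar and up to torsion, the Abel--Jacobi image of the generator of $\Heeg_{K,N}$ that enters the Gross--Zagier identity, and that the Gillet--Soulé pairing appearing in Gross--Zagier is precisely the one figuring in $(\mathtt{GS})$. These compatibility checks are handled, in the generality required here, along the lines of the analogous arguments in \cite{LV-TNC}.
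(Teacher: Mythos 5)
Your proposal correctly reconstructs the strategy underlying \cite[Theorem 7.4]{LV-TNC}, which is what the paper cites: parity (Theorem \ref{parity-thm}) gives $\epsilon(f)=-1$, hence $r_\an(f)$ odd and $\geq 1$; then, picking $K\in\mathscr I_1(f,p)$ witnessing $(\mathtt{GS})$, the non-triviality of the Heegner cycle furnished by Theorem \ref{main-selmer-thm} (ultimately Theorem \ref{kol-thm}) is fed into S.-W.~Zhang's higher-weight Gross--Zagier formula to force $L'(f/K,k/2)\neq 0$, hence $r_\an(f)=r_\an(f/K)\leq 1$.

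There is, however, one step in your argument that as written is not a valid deduction: you claim that non-degeneracy of ${\langle\cdot,\cdot\rangle}_{\GS}$ on $\Heeg_{K,N}\otimes_\Z\R$, together with the non-torsionness of $y_K$, ``forces'' ${\langle y_K,y_K\rangle}_{\GS}\neq 0$. This is false in general: a non-degenerate symmetric bilinear form on an $\R$-vector space of dimension $\geq 2$ can admit nonzero isotropic vectors (e.g.\ the hyperbolic plane), so non-degeneracy alone does not preclude $\langle y_K,y_K\rangle_{\GS}=0$. The implication you want holds because the relevant $\Gal(K/\Q)$-eigenspace of $\Heeg_{K,N}\otimes_\Z\R$ is $1$-dimensional, which is where the rank-$1$ hypothesis and the assumed injectivity of the $p$-adic Abel--Jacobi map really enter: they identify that eigenspace, under $\AJ$, with the $1$-dimensional Selmer eigenspace coming from $r_\p(f/K)=1$. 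On a $1$-dimensional space, non-degeneracy is equivalent to non-vanishing of the self-pairing of any nonzero vector. You should make this $1$-dimensionality explicit rather than attributing the conclusion to non-degeneracy and non-torsionness alone; otherwise the argument has a genuine logical hole at precisely the point where $(\mathtt{GS})$ is invoked.
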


\begin{proof} This is \cite[Theorem 7.4]{LV-TNC}. \end{proof}

\begin{remark} \label{K-rem}
If we knew that Gillet--Soul\'e height pairings are non-degenerate (at least on the $\R$-vector space $\Heeg_{K,N}\otimes_\Z\,\R$, or in full generality, as predicted by the conjectures in \cite{Beilinson}, \cite{bloch-height}, \cite{GS-2}), then Theorem \ref{main-converse-Q-thm} would become unconditional. Unfortunately, non-degeneracy results of this kind seem to lie well beyond the scope of current techniques.
\end{remark}

\bibliographystyle{amsplain}
\bibliography{Perrin-Riou}

\end{document}